\documentclass[11pt]{article}
\usepackage[T1]{fontenc}
\usepackage{mathrsfs}
\usepackage[centertags]{amsmath}
\usepackage{amsfonts}
\usepackage{amssymb}
\usepackage{amsthm}
\usepackage{cases}
\usepackage{epsfig}
\usepackage{graphicx}
\usepackage{color}
\usepackage{multirow}
\usepackage{tabularx}
\usepackage{booktabs}
\usepackage{threeparttable}
\usepackage{graphicx}
\usepackage{placeins}
\usepackage{array}
\usepackage{caption,float}
\usepackage{makecell}
\usepackage{microtype}         
\usepackage{bm}                
\usepackage{enumerate}         
\usepackage{tikz}
\usepackage{pgfplots}
\usepackage{subcaption}
\allowdisplaybreaks[4]
\usepackage{MnSymbol}
\usepackage{bbm}
\usepackage[round,authoryear]{natbib}
\usepackage{csquotes}
\usepackage{algorithm}
\usepackage{algpseudocode}
\usepackage{authblk}
\usepackage[a4paper]{geometry} 

\makeatletter

\usepackage[unicode,hidelinks]{hyperref}

\usepackage[capitalize]{cleveref}

\usepackage{caption}
\patchcmd{\NAT@citex}
  {\@citea\NAT@hyper@{%
     \NAT@nmfmt{\NAT@nm}%
     \hyper@natlinkbreak{\NAT@aysep\NAT@spacechar}{\@citeb\@extra@b@citeb}%
     \NAT@date}}
  {\@citea\NAT@nmfmt{\NAT@nm}%
   \NAT@aysep\NAT@spacechar\NAT@hyper@{\NAT@date}}{}{}

\usepackage{thmtools}
\declaretheorem[name=Theorem,numberwithin=section]{theorem}
\declaretheorem[name=Lemma,sibling=theorem]{lemma}

\declaretheorem[name=Proposition,sibling=theorem]{proposition}

\declaretheorem[name=Assumption,sibling=theorem,style=definition]{assumption}

\newtheoremstyle{remboldstyle}
  {}{}{}{}{\itshape}{.}{.5em}{{\thmname{#1 }}{\thmnumber{#2}}{\thmnote{ (#3)}}}

\crefname{theorem}{Theorem}{Theorems}
\Crefname{theorem}{Theorem}{Theorems}
\crefname{lemma}{Lemma}{Lemmas}
\Crefname{lemma}{Lemma}{Lemmas}
\crefname{proposition}{Proposition}{Propositions}
\Crefname{proposition}{Proposition}{Propositions}
\crefname{corollary}{Corollary}{Corollaries}
\Crefname{corollary}{Corollary}{Corollaries}
\crefname{definition}{Definition}{Definitions}
\Crefname{definition}{Definition}{Definitions}
\crefname{condition}{Condition}{Conditions}
\Crefname{condition}{Condition}{Conditions}
\crefname{assumption}{Assumption}{Assumptions}
\Crefname{assumption}{Assumption}{Assumptions}
\crefname{conclusion}{Conclusion}{Conclusions}
\Crefname{conclusion}{Conclusion}{Conclusions}
\crefname{remark}{Remark}{Remarks}
\Crefname{remark}{Remark}{Remarks}
\crefname{example}{Example}{Examples}
\Crefname{example}{Example}{Examples}
\crefname{figure}{Figure}{Figures}
\Crefname{figure}{Figure}{Figures}

\numberwithin{equation}{section}

\newcommand{\pr}{\operatorname{\mathbb{P}}}

\renewcommand{\d}{\operatorname{d}}
\newcommand{\E}{\operatorname{\mathbb{E}}}
\newcommand{\Var}{\operatorname{var}}
\newcommand{\cov}{\operatorname{Cov}}

\newcommand{\cdt}{\mid}

\newcommand{\I}{\operatorname{\mathbbm{1}}}

\newcommand{\e}{\mathrm{e}}
\newcommand{\EMP}{\mathrm{EMP}}

\newcommand{\bY}{\boldsymbol{Y}}
\newcommand{\bZ}{\boldsymbol{Z}}

\newcommand{\F}{\operatorname{\mathcal{F}}}
\newcommand{\cbr}[1]{\left\{ #1 \right\}}
\newcommand{\rbr}[1]{\left( #1 \right)}

\newcommand{\HR}{H\"usler--Reiss}

\begin{document}
\title{Estimating the H\"{u}sler--Reiss variogram matrix by clipped moments}

\author[1]{Shuang Hu\thanks{Corresponding author: hushuang@cqupt.edu.cn}}
\author[2,3]{Johan Segers\thanks{E-mail: jjjsegers@kuleuven.be}}

\affil[1]{School of Mathematics and Statistics/Key Laboratory of Intelligent Analysis and Decision on Complex Systems, Chongqing University of Posts and Telecommunications, 400715 Chongqing, China}
\affil[2]{Department of Mathematics, KU Leuven, B3001 Heverlee, Belgium}
\affil[3]{LIDAM/ISBA, UCLouvain, B1348 Louvain-la-Neuve, Belgium}

\maketitle

\begin{abstract}
In multivariate extreme value analysis, the tail dependence between some of the risk variables at hand may be weak, even when other variables do tend to become large simultaneously. Weak tail dependence may induce a substantial bias in estimation procedures based on the limiting multivariate (generalized) Pareto distribution of excesses over high thresholds. We consider a \HR{} multivariate generalized Pareto model and, motivated by this issue, propose first- and second-order moment estimators of its variogram matrix constructed from a lower-tail-clipped version of the underlying random vector. The asymptotic normality of the proposed estimators is established. We demonstrate by simulation studies that they have lower bias than the empirical variogram estimator in certain cases, particularly when the dependence between components is weak. The estimators are applied to flood discharge data from the Danube river basin and the US flight delay data, showing that the tail dependence structure implied by the fitted model based on the first-order clipped moment estimator aligns more closely with the empirical tail dependence of the data than that based on the empirical variogram estimator.
 \smallskip

\noindent {\bf Keywords.}~~H\"{u}sler--Reiss distribution; Variogram matrix; Moment estimator
\smallskip

\noindent {\bf MSC 2020 Subject Classification.}~~60G70; 62F10
\end{abstract}

\section{Introduction}
In many areas such as finance, environmental science, and engineering, extreme events can have disproportionately large impacts. Accurate modeling of such rare occurrences is essential for effective risk assessment and decision making. While univariate extreme value theory has been well-developed, practical problems often involve multiple interconnected variables, necessitating a multivariate framework that can capture both marginal extremes and their joint dependence structure.

Multivariate extreme value theory addresses this need by characterizing the limiting distribution of componentwise maxima through multivariate max-stable distributions or multivariate generalized Pareto (MGP) distributions (MGPD).
To enable flexible and interpretable modeling of extremal dependence, various parametric families have been introduced in the literature (see \citealp[Chapter 9]{BGST2004}; \citealp{DPR2012}; \citealp{HDC2017}; and references therein). Among these models, the \HR{} distribution parameterized by the so-called variogram matrix stands out for its analytical tractability and practical relevance. It arises as the limiting distribution of componentwise maxima in Gaussian triangular arrays~\citep{HR1989}, and also appears as the finite-dimensional marginal distribution of the Brown--Resnick process~\citep{EMKS2015}.

Owing to its modeling flexibility, the \HR{} distribution has garnered significant attention in the literature. Its theoretical aspects, such as the tail dependence properties and convergence results where it appears as a limit, have been extensively studied; see, for example, \citet{FR2010}, \citet{LP2014}, and \citet{HPW2016}. Beyond these theoretical developments, the widespread use of this model in practice highlights the importance of reliable statistical inference. Various estimation methods developed for multivariate max-stable distributions or multivariate (generalized) Pareto distributions can be adapted to the estimation of the variogram matrix, such as the composite likelihood estimation method in~\citet{PRS2010}, the M-estimator in~\citet{EKS2012} and the weighted least squares estimator in~\citet{EKS2018}. Specifically for the \HR{} family, \citet{HD2013} investigated the composite likelihood estimation approach in the context of the Brown--Resnick process, while \citet{EMKS2015} proposed several new estimators for the \HR{} distribution based on the peaks-over-threshold approach. Of particular relevance to our work, \citet{EV20} proposed an empirical estimator of the variogram matrix for a general multivariate Pareto distribution, which can be directly applied to the \HR{} model as a special case. Due to its simplicity and broad applicability, this estimator has been widely used in applications.

In practice, extremes often involve only a subset of variables becoming large while others remain moderate. Such configurations, referred to as extreme clusters \citep{CCFS2020} or extreme directions \citep{MKS2024}, motivate the need for flexible models that can capture both strong and weak tail dependence across different subsets of components. The \HR{} model is a natural choice in this context, as its variogram parameter matrix provides a parsimonious description of the entire dependence structure, including partial extremal behavior. However, estimation procedures based on empirical variograms, the widely adopted method in practice, can be severely biased when the dependence between certain components is weak. This bias arises because weak dependence increases the likelihood that some components take relatively small values, even when others are extreme. As a result, observations exceeding the threshold in only a subset of components often include non-extreme values in the remaining components, which weakens the estimation of the dependence structure implied by the limiting MGPD. Consequently, empirical variogram estimators, which rely on the assumption that the asymptotic model provides a good approximation in the tail region, can yield inaccurate estimates of the elements of the variogram matrix.

This observation motivates us to employ a clipping method in the sense that the standardized MGPD observations $\bm{X}$ below a threshold $c$ are modified through a deterministic transformation $\bm{Y}=\bm{X}\vee c=(X_i \vee c , i \in V)$, to mitigate the bias of the empirical variogram estimator introduced by non-extreme components. Based on this clipping, we propose two moment-type estimators for the elements of the variogram matrix and establish the asymptotic normality of these estimators. Simulation studies and empirical analyses indicate that the proposed estimators significantly mitigate bias in the \HR{} model when the dependence among certain components is weak. Furthermore, we apply the \HR{} model to extreme flood discharges from the Danube River basin and the US flight delay data, and estimate the parameters using the proposed estimators. The resulting fitted model based on the first-order clipped moment estimator better captures the empirical tail dependence structure than that based on the empirical variogram estimator.

In the remainder of this paper, we first review the necessary background in \cref{sec:back}, and then describe the construction method of the moment estimators in \cref{sec:ConstrOfEsts}. The main theoretical results are presented in \cref{sec:Results}.
A simulation study is conducted in \cref{sec:simstudy}, followed by a case study on daily discharges from the Danube and the US flight delay data in \cref{sec:application}. \Cref{sec:conclusion} concludes. All proofs are deferred to \cref{sec:Appendix}. Additional simulation results are provided in \cref{sec:gamma2}.

\paragraph{Notation.}
Throughout the paper, bold symbols refer to multivariate quantities. Comparison and arithmetic operators between two vectors or between a vector and a real number are understood componentwise. For instance, for any $d$-dimensional vectors $\bm{x}$ and $\bm{y}$, we write ``$\bm{x}\le \bm{y}$'' if ``$x_i\le y_i$'' for all $1\le i\le d$. For a real number $T$, ``$\bm{x}\le T$'' means ``$x_i \le T$'' for all $i \in \{1,\ldots,d\}$. Furthermore, ``$\bm{x}\vee \bm{y}$'' denotes the vector whose $i$-th component is $\max(x_i,y_i)$. The relation $X \overset{d}=Y$ indicates that the random variables $X$ and $Y$ are identically distributed. For sets $U$ and $V$, the Cartesian product is denoted by $U\times V$. Vectors are treated as column vectors by default.

\section{Background}
\label{sec:back}

\subsection{Multivariate generalized Pareto distributions}
The MGPD models the extremal dependence of a random vector through threshold exceedances. To make abstraction of the univariate marginal distributions and concentrate on the extremal dependence, we introduce the MGPD on a canonical exponential scale. The MGPD with other margins can be obtained via monotone marginal transformations.

Let $d>1$ and $V=\{1,\ldots,d\}$. Consider a random vector $\tilde{\bm{X}}=(\tilde{X}_{i},\,i\in V)$ with standard exponential margins, i.e.,
\[
\pr(\tilde{X}_i\le x) = 1-\exp(-x), \quad x\ge 0.
\]
Define $\mathcal{L}=\{\bm{y}\in [-\infty,\infty)^d: \bm{y} \nleqslant  \bm{0}\}$. A random vector $\bm{Y}=(Y_v,v\in V)$ is said to follow a \emph{standard multivariate generalized Pareto distribution} if it can arise as the limit
\begin{align}
\label{eq:MPD}
\pr(\bm{Y}\le y)=\lim_{u\to\infty} \pr(\tilde{\bm{X}}-u \le \bm{y} \cdt \tilde{\bm{X}} \nleqslant u), \quad \bm{y} \in \mathcal{L}.
\end{align}
In this case, the random vector $\tilde{\bm{X}}$ is said to belong to the domain of attraction of $\bm{Y}$. General MGPDs with generalized Pareto margins can be obtained through componentwise transformations
\[
\sigma_i \frac{\exp(\xi_i Y_i)-1}{\xi_i}, \quad i \in V,
\]
where $\sigma_i \in (0,\infty)$ and $\xi_i \in \mathbb{R}$. We refer to~\citet{R2008}, \citet{BGST2004} and \citet{naveau2024multivariate} for more details of the MGPD and the multivariate extreme value theory.
Since we are mainly interested in the dependence structure, we will focus on the standard MGPD in the following.

For a standard MGPD on the exponential scale, it admits a useful stochastic representation. For $m\in V$, let $\bm{Y}^{(m)}$ denote the conditional distribution of the exceedance vector given that the $m$-th component exceeds the threshold zero, i.e., $\bm{Y}^{(m)}=\bm{Y} \cdt Y_m>0$, with support $\mathcal{L}^m=\{\bm y\in [-\infty,\infty)^d \, | \, y_m >0\}$.
Then,
\begin{equation}
\label{eq:StochRepr}
    \bY^{(m)} \stackrel{d}{=} E + \bZ^{(m)} = \rbr{E + Z^{(m)}_1, \ldots, E + Z^{(m)}_d},
\end{equation}
where $\bm{Z}^{(m)}=(Z_v^{(m)},v\in V)$ is a $d$-dimensional random vector with $Z^{(m)}_m=0$ almost surely and where $E$ is a standard exponential random variable, with distribution function $\pr(E\le x)=1-\exp(-x)$, $x\ge 0$, which is independent of $\bZ^{(m)}$ (see \citet[Eq.~(6)]{EV20}; \citet[Definition 2.1]{naveau2024multivariate}). The representation separates the extremal magnitude and dependence structure. The common exponential component $E$ describes the size of the extreme event, while $\bm{Z}^{(m)}$ determines the dependence among components in the tail region.

\subsection{\HR{} MGPD}

A prominent and tractable subclass of MGPD is the \HR{} MGPD. In this model, the dependence structure is fully characterized  by a matrix $\bm{\Gamma}=(\gamma_{ij})_{i,i \in V}$ called \emph{variogram matrix}, which is a $d\times d$ dimensional symmetric, conditionally negative definite matrix with elements $\gamma_{ij}\in [0,\infty)$ satisfying $\gamma_{ii}=0$ for $i \in V$. That is, $\bm{\Gamma}\in \mathcal{M}$ with
\begin{equation}
\label{eq:NegDefMatrix}
\mathcal{M}=\left\{\bm{\Gamma}\in [0,\infty)^{d\times d}: \bm{\Gamma}=\bm{\Gamma}^{\top}, \, \mathrm{diag}(\bm{\Gamma})=0, \ \text{and} \ \bm{u}^{\top} \bm{\Gamma} \bm{u} <0, \, \forall \bm{u} \neq \bm{0}\in \mathbb{R}^{d}, \, \sum_{i\in V} u_i =\bm{0} \right\}.
\end{equation}
A \HR{} MGPD can be defined through the stochastic representation of the standard MGPD. Specifically, a random vector $\bm{Y}$ is said to follow a \emph{\HR{} MGPD} with variogram matrix $\bm{\Gamma}=(\gamma_{ij}) \in \mathcal{M}$ if, for any $m\in V$, the conditional excess vector admits the representation \eqref{eq:StochRepr}, i.e.,
\[
\bm{Y}^{(m)}
\stackrel{d}{=}
E+\bm{Z}^{(m)},
\]
where $E$ is a standard exponential random variable independent of $\bm{Z}^{(m)}$, and $\bm{Z}^{(m)}$ is a (possibly degenerate) Gaussian random vector satisfying
\[
Z_m^{(m)}=0,\quad \mathrm{a.s.}
\]
with mean vector $\bm{\mu}^{(m)}$ and covariance matrix $\bm{\Sigma}^{(m)}$ given by
\[
\mu_i^{(m)}=-\frac12\gamma_{im}, \  \  \
\Sigma_{ij}^{(m)}
=
\frac12
(\gamma_{im}+\gamma_{jm}-\gamma_{ij}).
\]
Since $\bm{\Gamma}$ is symmetric, i.e.,
\[
\gamma_{ij}=\gamma_{ji}, \quad i,j\in V,
\]
the marginal distributions of $Z_i^{(j)}$ and $Z_j^{(i)}$ are identical, with
\[
Z_j^{(i)} \overset{d}= Z_i^{(j)}
\sim
N(-\gamma_{ij}/2,\gamma_{ij}).
\]

\subsection{Empirical variogram estimator}

The variogram matrix is not only a fundamental parameter for the \HR{} MGPD, indeed, it has been extended to general MGPDs by \citet{EV20}.
To facilitate statistical inference, they proposed the empirical variogram estimator based on data in the domain of attraction of the MGPD. Although it was originally defined using the multivariate Pareto representation, we reformulate it on the exponential scale here to maintain consistency with the notation and framework adopted in this paper.

Consider a random vector $\bY$ following a standard MGPD on the exponential scale. Let $\bm X =(X_i,i\in V)$ denote a $d-$variate random vector with continuous marginal functions $F_i$, $i\in V$. Denote $F(\bm{X})=(F_i(X_i),\, i\in V)$. Assume that, after marginal transformation
\begin{align}
\label{eq:marginal_Transform}
\bm{X} \mapsto \tilde{\bm{X}} = -\log(1-F(\bm{X})),
\end{align}
the standardized random vector $\tilde{\bm{X}}$, with standard exponential margins, belongs to the domain of attraction of $\bY$. Suppose $\bm{X}_{t}=(X_{t1},\ldots,X_{td})$, $t=1,\ldots,n,$ are independent copies of $\bm{X}$.

Let $k=k(n)$ be an integer sequence satisfying $k\to\infty$ and $k/n\to q\in [0,1]$ as $n\to\infty$. Denote the logarithmic transformation to the empirical Pareto exceedance by
\begin{equation}
\label{eq:emp_Y}
\hat{Y}_{ti} = \log \left\{\frac{k}{n(1 - \hat{F}_i(X_{ti}))}\right\},\  i\in V, \ t=1,\ldots,n,
\end{equation}
where
\begin{equation*}
    \hat{F}_{i}(x) = \frac{1}{n+1}\sum_{t=1}^{n} \I\!\left\{X_{ti}\le x\right\}
\end{equation*}
is the (adjusted) empirical distribution function of the $i$-th variable.
The \emph{empirical variogram estimator rooted at $m$}, denoted by $\hat{\bm{\Gamma}}^{(m)}=\left(\hat{\gamma}_{n,ij}^{(m)}\right)$,
proposed by~\citet{EV20} is given as
\begin{align}
\label{eq:EmpVario}
\hat{\gamma}_{n,ij}^{(m)}=\widehat{\Var}\left\{(\hat{Y}_{ti}-\hat{Y}_{tj}): \hat{F}_{m}(X_{m})\ge 1-k/n\right\}, \, i,j,m\in V.
\end{align}
where $\widehat{\Var}$ denotes the sample variance of $\hat{Y}_{ti}-\hat{Y}_{tj}$ for those $t \in \cbr{1,\ldots,n}$ such that $\hat{F}_m(X_{tm}) \ge 1 - k/n$.

For a \HR{} MGPD with variogram matrix $\bm{\Gamma}$,
it turns out that $\bm{\Gamma}$ can be estimated by any  $\hat{\bm{\Gamma}}^{(m)}$, $m\in V$,
so does the averaged empirical estimator introduced also by \citet{EV20} (hereafter referred to as the empirical variogram estimator)
\begin{equation}
\label{eq:MeanEmpVario}
\hat{\gamma}_{n,ij}^{\mathrm{EMP}} = \frac{1}{d} \sum_{m \in V} \hat{\gamma}_{n,ij}^{(m)},
\end{equation}
with $\hat{\gamma}_{n,ij}^{(m)}$ given by~\eqref{eq:EmpVario}.

Owing to its computational simplicity and ease of implementation, the empirical estimator has been widely adopted in practice. However, in heterogeneous dependence structures, it may suffer from bias due to the inclusion of non-extreme observations. Specifically, for fixed $k \in \{1,\ldots,n\}$ and $(i, j) \in V \times V$ with $i \ne j$, the empirical variogram is the average of $\hat{\gamma}_{n,ij}^{(m)}$ over $m \in V$, where $\hat{\gamma}_{n,ij}^{(m)}$ is based on the points $(\hat{Y}_{ti}, \hat{Y}_{tj})$ for all $t \in \{1,\ldots,n\}$ such that $\hat{Y}_{tm} \ge 0$. For $m \notin \{i,j\}$, if variable $m$ is only weakly dependent with variables $i$ and/or $j$, then the empirical variogram also uses data points for which the observations of variables $i$ and $j$ are not large at all. Although the bias issue arises for general MGPDs, in this work we concentrate on the \HR{} MGPD case. To mitigate this bias, we introduce a moment method that employs a clipping strategy on the lower tail of the \HR{} MGPD vector, as described in the next section.

\section{Moment-based variogram estimators}
\label{sec:ConstrOfEsts}

\subsection{Moments and sample moments of clipped MGP random vectors}

To eliminate the bias caused by the non-extreme values of the sample in the estimation of the variogram matrix, we propose an approach based on the clipped MGP random vector.

For a $d$-variate random vector $\bY$ with an arbitrary MGPD, we fix a constant $c\ge 0$ and consider the random vector clipped at level $-c$ given by
\begin{align*}
\left(Y_i^{(j)} \vee (-c), \ Y_j^{(j)}\right)
\end{align*}
for each pair $(i,j)\in V\times V$ and $i\neq j$.
Under the stochastic representation in~\eqref{eq:StochRepr}, the pair above is distributed as
\begin{equation*}
\label{eq:EqualinDf}
\left((E+Z_{i}^{(j)})\vee (-c), \ E\right).
\end{equation*}
Assume that $\bY$ follows a parametric model with parameter vector $\bm{\theta} \in \Theta \subset \mathbb{R}^p$. Write $x_+ = x \vee 0$, and note that $(x \vee (-c)) + c = (x + c)_+$. We focus on the first- and second-order moments of the clipped vector shifted by $c$ defined as
\begin{equation}
    \label{eq:eijl}
  e_{ij}^{(\ell)}(\bm{\theta},c) := \E \left\{
        \left( Y_i^{(j)} +c \right)_{+}^{\ell} \right\}, \qquad i,j \in V, \, \ell=1,2.
\end{equation}
These clipped moments always exist for any $i,j\in V$ (see \cref{lemma:ExistTruVario} in~\cref{sec:A.2}).

Recall that $\bm{X}_{t}=(X_{t1},\ldots,X_{td})$, $t=1,\ldots,n,$ are independent copies of the random vector $\bm{X}$, where the marginally transformed vector $\tilde{\bm X}$ of $\bm X$ in \eqref{eq:marginal_Transform} belongs to the domain of attraction of $\bm Y$ in the sense of \cref{eq:MPD}. Let $k=k(n) \in \cbr{1,\ldots,n}$ be an intermediate sequence satisfying
\[
k\to\infty, \, \, k/n\to 0, \, \,  \mbox{as} \ n\to \infty.
\]
Recall that $\hat{Y}_{ti}$ is the logarithmic transformation to the empirical Pareto exceedance defined in \eqref{eq:emp_Y}.
Motivated by the convergence in~\eqref{eq:MPD}, for $\ell=1,2$ and any $i,j\in V$, the sample versions of $e_{ij}^{(\ell)}(\bm{\theta},c)$ in~\eqref{eq:eijl} can be constructed as
\begin{align}
\notag
\hat{e}_{n,ij}^{(\ell)}\left(k,c\right) &=\frac{1}{k} \sum_{t=1}^{n} \left( \log \left\{\frac{k}{n(1 - \hat{F}_i(X_{ti}))}\right\}+c\right)_{+}^{\ell} \I\!\left\{\hat{F}_{j}(X_{tj}) \ge 1-\frac{k}{n} \right\}
\\
\label{eq:Empetilde}
&=\frac{1}{k} \sum_{t=1}^{n} \left( \hat{Y}_{ti}+c\right)_{+}^{\ell} \I\!\left\{\hat{F}_{j}(X_{tj}) \ge 1-\frac{k}{n} \right\}.
\end{align}
Hence, a straightforward idea is that we can estimate the parameter $\bm{\theta}$ by letting
\begin{align}
\label{eq:MomentMethod}
    \hat{e}_{n,ij}^{(\ell)}\left(k,c\right)=e_{ij}^{(\ell)}(\bm{\theta},c), \qquad
    i,j \in V, \; \ell=1,2,
\end{align}
provided that the corresponding moment equations have a unique solution within the valid parameter space.

If the variogram matrix of a general MGPD is continuous in $\bm{\theta}$ for $m \in V$, then $\bm{\theta}$ can first be estimated by $\hat{\bm{\theta}}$ via the (generalized) method of moments, and the corresponding estimate of the variogram matrix can subsequently be obtained from $\hat{\bm{\theta}}$. This estimation idea applies, theoretically, to a general MGPD. However, in the following, we focus on its implementation for the \HR{} MGPD.

\subsection{Moment variogram estimators for a \HR{} MGPD}
\label{sec:MomentHR}

In the following, we assume that $\bm{Y}=(Y_v,v\in V)$ is a $d$-variate random vector following a \HR{} MGPD with parameter $\bm{\theta=\bm{\Gamma}}$, where $\bm{\Gamma}\in \mathcal{M}$ is a variogram matrix (cf. \eqref{eq:NegDefMatrix}).
By a straightforward calculation, the explicit expressions of the moment functions defined in \eqref{eq:eijl} for the \HR{} MGPD can be derived, as stated in the lemma below.

\begin{lemma}[{\bf clipped moment functions for a \HR{} MGPD}]
\label{lemma:monotonicity}
Assume the random vector $\bm{Y}$ follows a \HR{} MGPD with variogram matrix $\bm{\Gamma}=(\gamma_{ij})_{i,j\in V}$. Then, for fixed $c\in [0,\infty)$ and each pair of $i,j\in V$, we have
\begin{equation*}
   \E \left\{
        \left( Y_i^{(j)} + c \right)_{+}^{\ell} \right\}= e^{(\ell)}(\gamma_{ij},c),
\end{equation*}
with
 \begin{align}
 \label{eq:etildeFirst}
e^{(1)}(\gamma, c)&=
    \exp(c) \Phi\left(-\frac{c+\gamma/2}{\sqrt{\gamma}}\right)
     + \sqrt{\gamma} \, \phi\left(\frac{c-\gamma/2}{\sqrt{\gamma}}\right) + \left(1-\frac{\gamma}{2}+c\right) \Phi\left(\frac{ c-\gamma/2}{\sqrt{\gamma}}\right),
\end{align}
and
\begin{align}
\label{eq:etildeSecond}
\nonumber
    e^{(2)}(\gamma, c)&=
    2 \exp(c) \Phi\left(-\frac{c+\gamma/2}{\sqrt{\gamma}}\right)  - \sqrt{\gamma} \left(\frac{\gamma}{2} - c - 2\right) \phi\left(\frac{c -\gamma/2}{\sqrt{\gamma}}\right)  \\
    &\qquad \null + \left\{\frac{1}{4} \gamma^2 - \gamma c + (1 + c)^2 +1 \right\} \Phi\left(\frac{ c-\gamma/2}{\sqrt{\gamma}}\right),
\end{align}
where $e^{(\ell)}(0; c)$ is defined as the limit $e^{(\ell)}(0, c)=\lim_{\gamma\to 0} e^{(\ell)}(\gamma, c)$ with
\[
e^{(1)}(0,c)=\E \left(E+c\right) = 1+c, \qquad
e^{(2)}(0,c)=\E \left\{(E+c\right)^2\} = (1+c)^2+1.
\]
Moreover, both $e^{(1)}(\gamma, c)$ and $e^{(2)}(\gamma, c)$ are strictly decreasing in $\gamma$ on $(0,\infty)$.
\end{lemma}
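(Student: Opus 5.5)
The plan is to condition on the Gaussian part of the stochastic representation \eqref{eq:StochRepr}. For the \HR{} MGPD, $Y_i^{(j)} \stackrel{d}{=} E + Z$ with $Z = Z_i^{(j)} \sim N(-\gamma/2,\gamma)$, $\gamma=\gamma_{ij}$, and $E$ standard exponential independent of $Z$. Set $W = Z + c \sim N(\mu,\sigma^2)$ with $\mu = c-\gamma/2$ and $\sigma^2=\gamma$. Then
\[
e^{(\ell)}(\gamma,c)=\E\{g_\ell(W)\}, \qquad g_\ell(w):=\E\{(E+w)_+^\ell\}.
\]
An elementary exponential integral gives, for $w<0$,
\[
g_1(w)=e^{w}, \qquad g_2(w)=2e^{w},
\]
and for $w\ge 0$,
\[
g_1(w)=w+1, \qquad g_2(w)=w^2+2w+2 .
\]

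The explicit formulas then follow from standard truncated-normal identities.

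\begin{itemize}
\item \emph{Exponential term.} Exponential tilting gives $\E\{e^{W}\I(W<0)\}=e^{\mu+\sigma^2/2}\,\Phi(-(\mu+\sigma^2)/\sigma)$. Since $\mu+\sigma^2/2=c$ and $\mu+\sigma^2=c+\gamma/2$, this equals $e^{c}\Phi(-(c+\gamma/2)/\sqrt{\gamma})$.
\item \emph{First-order polynomial term.} $\E\{(W+1)\I(W\ge0)\}=(1+\mu)\Phi(\mu/\sigma)+\sigma\phi(\mu/\sigma)$.
\item \emph{Second-order polynomial term.} Using $\E\{W^2\I(W\ge 0)\}=(\mu^2+\sigma^2)\Phi(\mu/\sigma)+\sigma\mu\,\phi(\mu/\sigma)$, we get $\E\{(W^2+2W+2)\I(W\ge 0)\}=\{(\mu+1)^2+1+\sigma^2\}\Phi(\mu/\sigma)+\sigma(\mu+2)\phi(\mu/\sigma)$.
\end{itemize}

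Substituting $\mu$ and $\sigma$ gives \eqref{eq:etildeFirst} and \eqref{eq:etildeSecond}. For the second moment, a quick check gives $(c-\gamma/2+1)^2+1+\gamma=\tfrac14\gamma^2-\gamma c+(1+c)^2+1$. The values at $\gamma=0$ follow by letting $\gamma\downarrow 0$: the $\Phi$ terms tend to $0$ and $1$, and the $\sqrt\gamma\,\phi$ term vanishes. Equivalently, $W\to c$ in $L^2$ and $g_\ell$ has at most quadratic growth.

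For strict monotonicity I would avoid differentiating the closed forms and use a martingale/Jensen argument instead. Realize $Z$ as $Z_\gamma = B_\gamma-\gamma/2$ for a standard Brownian motion $B$. Then $M_\gamma=e^{Z_\gamma}$ is a positive martingale in $\gamma$, and
\[
e^{(\ell)}(\gamma,c)=\E\{\psi_\ell(e^{c}M_\gamma)\}, \qquad \psi_\ell(x):=g_\ell(\log x).
\]
On $(0,1)$ we have $\psi_1(x)=x$ and $\psi_2(x)=2x$. On $[1,\infty)$ we have $\psi_1(x)=1+\log x$ and $\psi_2(x)=(\log x)^2+2\log x+2$.

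\begin{itemize}
\item \emph{Concavity.} Both $\psi_\ell$ are $C^1$ at $x=1$, with one-sided derivatives $1$ and $2$ matching. On $(1,\infty)$ their second derivatives are $-1/x^2$ and $-2\log x/x^2$, both strictly negative, so each $\psi_\ell$ is concave on $(0,\infty)$ and strictly concave on $(1,\infty)$.
\item \emph{Weak monotonicity.} For $0\le\gamma<\gamma'$, conditional Jensen gives $\E\{\psi_\ell(e^cM_{\gamma'})\mid \mathcal F_\gamma\}\le \psi_\ell(e^cM_\gamma)$. Hence $e^{(\ell)}$ is nonincreasing in $\gamma$.
\item \emph{Integrability.} $\psi_\ell$ grows at most like $(\log x)^2$, so all the expectations above are finite.
\end{itemize}

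The main obstacle is upgrading this to strict decrease. The Jensen gap is strictly positive whenever the conditional law of $e^cM_{\gamma'}$ given $\mathcal F_\gamma$ is nondegenerate and charges a region where $\psi_\ell$ is strictly concave. Given $\mathcal F_\gamma$, $e^cM_{\gamma'}=e^cM_\gamma\,e^{N}$ with $N\sim N(-(\gamma'-\gamma)/2,\gamma'-\gamma)$. This is lognormal with full support on $(0,\infty)$, so it charges $(1,\infty)$ with positive probability. On that region $\psi_\ell$ lies strictly below its tangent line at the conditional mean, so the gap is strictly positive for every value of $M_\gamma$ and hence in expectation. This yields strict decrease on $[0,\infty)$, in particular on $(0,\infty)$.

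As a fallback, one could differentiate \eqref{eq:etildeFirst} in $\gamma$ directly. Using the identity $e^{c}\phi((c+\gamma/2)/\sqrt\gamma)=\phi((c-\gamma/2)/\sqrt\gamma)$, I expect most terms to cancel, leaving $-\tfrac12\Phi((c-\gamma/2)/\sqrt\gamma)$ for $\ell=1$.
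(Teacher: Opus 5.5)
Your derivation of the closed forms follows the paper's proof. You condition on $Z_i^{(j)}$, use $g_1(w)=e^{w}\I\{w<0\}+(1+w)\I\{w\ge 0\}$ and $g_2(w)=2e^{w}\I\{w<0\}+(2+2w+w^2)\I\{w\ge 0\}$, and integrate against $N(c-\gamma/2,\gamma)$. The paper only arranges the second moment differently, as $2e^{(1)}+\int_0^\infty x^2\,\d\pr(Z_i^{(j)}+c\le x)$.

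Your monotonicity proof takes a genuinely different route. The paper differentiates the closed forms to get $e^{(1),'}(\gamma)=-\tfrac12\Phi\bigl((c-\gamma/2)/\sqrt\gamma\bigr)$ and $e^{(2),'}(\gamma)=(\gamma/2-c)\Phi(\cdot)-\sqrt\gamma\,\phi(\cdot)$. It shows the latter is negative via Mills' ratio, then extends to $\gamma=0$ by right-continuity. Your martingale/Jensen argument is correct. You write $e^{(\ell)}(\gamma,c)=\E\,\psi_\ell(e^{c}M_\gamma)$ with $\psi_\ell$ concave and strictly concave on $(1,\infty)$, and get a strict Jensen gap because the lognormal increment charges $(1,\infty)$. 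One small point: when the conditional mean $m$ exceeds $1$, the tangent line touches $\psi_\ell$ at $y=m$, but that is a null set.

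What your route buys: no cancellations or Mills' ratio, all of $[0,\infty)$ at once, and an explanation of why the monotonicity holds. The clipped moments are concave functionals of $e^{c}M_\gamma$, which increases in convex order in $\gamma$.

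What it does not give is the pointwise fact $e^{(\ell),'}(\gamma)<0$. That is what the delta method through $e^{(\ell),\leftarrow}$ in \cref{thm:AsyNormalityMoment} actually needs, and the paper's route supplies it for free.

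The two approaches are easy to reconcile. Since $(g_\ell''-g_\ell')(w)=x^2\psi_\ell''(x)$ at $x=e^{w}$, Itô's formula gives $\partial_\gamma e^{(\ell)}(\gamma,c)=\tfrac12\E\{(g_\ell''-g_\ell')(W)\}$. This equals $-\tfrac12\pr(W>0)$ for $\ell=1$ and $-\E(W_+)$ for $\ell=2$, so the sign of $e^{(2),'}$ is immediate.

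One small slip: at $c=0$ both $\Phi$ terms tend to $1/2$, not to $0$ and $1$. The limit $e^{(\ell)}(0,0)=g_\ell(0)$ is still correct (for $\ell=1$ it reads $\tfrac12+\tfrac12=1$). Your alternative argument, via $W\to c$ in $L^2$ and the at most quadratic growth of $g_\ell$, is the one that covers all $c\ge 0$.
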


Note that for fixed $c$, the moment functions allow for the construction of moment estimators. In consideration of the symmetry of the variogram matrix, i.e., the property that $\bm{\Gamma}=\bm{\Gamma}^{\top}$, we propose to estimate the entry $\gamma_{ij}$ for any $i,j\in V$ and $i\neq j$ based on~\eqref{eq:MomentMethod} by the moment estimators $\hat{\gamma}_{n,ij}^{\mathrm{M},(\ell)}$ defined by
\begin{equation}
\label{eq:M1}
    e^{(\ell)}\left(
        \hat{\gamma}_{n,ij}^{\mathrm{M},(\ell)}(k, c), c
    \right) =  \frac{1}{2} \left\{
        \hat{e}_{n,ij}^{(\ell)}(k,c) + \hat{e}_{n,ji}^{(\ell)}(k,c)
    \right\}, \quad \ell=1,2.
\end{equation}
Here, the estimators are defined only when $i \ne j$, since the diagonal elements of $\bm{\Gamma}$ are equal to zero by definition. In practice, the choice for $c$ is restricted to $[0, -\log(k/n)]$, since if $ c \ge -\log(k/n)$, the clipping has no effect at all on the value of $\hat{e}_{n,ij}^{(\ell)}(k, c)$.

As the choice of the clipping parameter $c$ is not the focus of this work, all subsequent discussions proceed under the assumption that $c$ is a given number.
To simplify the notation, for fixed $c$, we write $ e^{(\ell)}(\gamma) := e^{(\ell)}(\gamma, c)$, and we suppress \((k, c)\) in \(\hat{e}_{n,ij}^{(\ell)}(k,c)\) and \(\hat{\gamma}_{n,ij}^{\mathrm{M},(\ell)}(k,c)\) occasionally whenever no ambiguity arises.

\section{Consistency and asymptotic normality}
\label{sec:Results}

In this section, we present the theoretical properties of the proposed estimators. We start by establishing the weak consistency of the moment estimators defined in~\eqref{eq:M1}. The following proposition shows that the empirical moments $\hat{e}_{n,ij}^{(\ell)}(k,c)$ converge to their true counterparts for an arbitrary MGPD, providing a crucial step toward proving the weak consistency result stated in the subsequent theorem. Before stating the main result, we collect the conditions on $k(n)$ in the following assumption.

\begin{assumption}
\label{Asp:IntermediateSeq}
Assume $k=k(n)$ is an intermediate sequence such that $k\to \infty$ and $k/n\to0$ as $n\to\infty$.
\end{assumption}

\begin{proposition}
\label{pro:weakconsistency}
Let $\bY$ be an arbitrary MGP distributed random vector, and let $\bm{X}_{t}=(X_{t1},\ldots,X_{td})$, $t=1,\ldots,n$, denote independent copies of a random vector $\bm{X}$, which has continuous margins and the marginally transformed vector of $\bm{X}$ in \eqref{eq:marginal_Transform} lies in the domain of attraction of $\bY$. For $c \ge 0$ and for $i,j\in V$ with $i\neq j$, let $\hat{e}_{n,ij}^{(\ell)}(k,c)$ with $\ell=1,2$ be defined as in~\eqref{eq:Empetilde}. Then, under~\cref{Asp:IntermediateSeq}, we have
\[
\hat{e}_{n,ij}^{(\ell)}(k,c) \overset{\pr} \to
\E\left\{\left(Y_i^{(j)} + c\right)_{+}^{\ell}\right\}
, \  \text{as} \  n\to \infty.
\]
\end{proposition}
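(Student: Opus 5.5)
We prove the convergence by rewriting $\hat{e}_{n,ij}^{(\ell)}(k,c)$ as an integral against the tail empirical measure of the rank-transformed sample. We then combine the standard uniform consistency of the tail empirical process with a truncation argument that handles the unbounded integrand. Let $\bm U_t = 1-F(\bm X_t)$, which has uniform margins. Since $\hat Y_{ti} = \log\{k/(n(1-\hat F_i(X_{ti})))\}$ and $n(1-\hat F_i(X_{ti})) = n+1-R_{ti}\cdot\frac{n}{n+1}\cdots$ is essentially $n$ times the empirical tail quantile, we define the tail empirical measure
\[
\hat\nu_n(A) = \frac1k\sum_{t=1}^n \I\!\left\{\tfrac nk(1-\hat F(\bm X_t))\in A\right\},
\]
and the limit measure $\nu$, the exponent measure restricted to the relevant region, expressed on the scale $\bm w = \e^{-\bm y}$. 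Then
\[
\hat e_{n,ij}^{(\ell)} = \int (c-\log w_i)_+^{\ell}\,\I\{w_j\le 1\}\,\hat\nu_n(\d\bm w).
\]
Via the domain-of-attraction assumption \eqref{eq:MPD} with $k/n\to0$, the target is $\int (c-\log w_i)_+^\ell \I\{w_j\le1\}\,\nu(\d \bm w) = \E\{(Y_i^{(j)}+c)_+^\ell\}$. This uses the standard identity that $\nu(\{w_j\le 1\}\cap\cdot)$ is the law of $\e^{-\bm Y^{(j)}}$, with total mass one for the rank-based normalisation.

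\textbf{Step 1 (bounded part).} Fix $0<\delta<1$ and split the integrand $g(\bm w)=(c-\log w_i)_+^\ell\I\{w_j\le1\}$ into two pieces. The first is $g_\delta = g\,\I\{w_i\ge\delta\}$, which is bounded by $(c-\log\delta)^\ell$ and vanishes when $w_i\ge \e^{c}$. The second is the remainder $g\,\I\{w_i<\delta\}$. For $g_\delta$ we invoke the well-known weak consistency of the rank-based tail empirical measure: $\hat\nu_n(A)\to\nu(A)$ in probability uniformly over rectangles $[\bm 0,\bm x]$-type sets in a compact range, with the ranks handled through Vervaat's lemma or the uniform tail quantile process (as in \citet{EV20} or standard references such as Einmahl et al.). The function $g_\delta$ is continuous except on $\nu$-null boundaries, bounded, and supported in $\{w_j\le1\}\cap\{w_i\le \e^c\}$. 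Note that $\e^c\le n/k$ eventually, so the support lies in the observable region. We can therefore approximate $g_\delta$ uniformly from above and below by step functions on finitely many rectangles, which gives $\int g_\delta\,\d\hat\nu_n\to\int g_\delta\,\d\nu$ in probability. Continuity of the margins of $\nu$ ensures the boundaries carry no mass.

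\textbf{Step 2 (truncation, the main obstacle).} We must show that
\[
\lim_{\delta\to0}\limsup_n \pr\left(\int g\I\{w_i<\delta\}\,\d\hat\nu_n>\varepsilon\right)=0,
\]
together with the same statement for $\nu$. The latter is immediate, since $E+Z$ has finite moments and the integral equals $\E\{(Y_i^{(j)}+c)^\ell\I\{Y_i^{(j)}>-\log\delta\}\}$. For the empirical part we drop the indicator on $w_j$ and bound
\[
\int g\I\{w_i<\delta\}\,\d\hat\nu_n\le \frac1k\sum_t (c+\hat Y_{ti})_+^\ell\I\{\hat Y_{ti}>-\log\delta\}.
\]
This is a deterministic function of the ranks of a single margin. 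The ranks $n+1-R_{ti}$ run through $1,\dots,n$, so the sum equals
\[
\frac1k\sum_{r=1}^{\lfloor\delta k\rfloor}\left(c+\log\tfrac{k(n+1)}{nr}\right)^\ell,
\]
up to the $(n+1)/n$ factor. This is a Riemann sum converging to $\int_0^\delta (c-\log u)^\ell\,\d u$, which tends to $0$ as $\delta\to0$. The bound is therefore deterministic, and the use of ranks makes this step clean.

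Finally, we combine the two steps by a standard $\varepsilon/3$ argument to conclude $\hat e_{n,ij}^{(\ell)}\overset{\pr}\to\E\{(Y_i^{(j)}+c)_+^\ell\}$. The genuinely delicate point is Step 1's appeal to tail empirical consistency with estimated margins. If no ready reference is assumed, we would prove it by showing, via the law of large numbers for binomial counts with mean of order $k$, that the counts $\frac1k\sum_t\I\{U_{ti}\le \frac knx_i,\,U_{tj}\le\frac kn x_j\}$ converge to their limits pointwise. We would then upgrade to uniform convergence by monotonicity, and replace $U_{ti}$ by ranks using the uniform convergence of the tail quantile function $\frac nk U_{i,(\lceil kx\rceil)}\to x$.
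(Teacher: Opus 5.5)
Your proof is correct, but it handles the key difficulty, the unbounded integrand as $w_i\to0$, differently from the paper.

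\textbf{The paper's route.} It uses a layer-cake (Fubini) identity to write the empirical, pre-asymptotic and limiting clipped moments as
\[
\int_{1/k}^{\e^{c}} \tilde R_{n,ij}(t,1)\,\frac{\ell(-\log t)^{\ell-1}}{t}\,\d t,
\]
together with the analogous integrals with $q^{-1}C_{ij}(qt,q)$ and $R_{ij}(t,1)$ in place of $\tilde R_{n,ij}(t,1)$. The stochastic error is controlled by the uniform rate $\sup_{\bm x\in[0,T]^2}|\hat R_{n,ij}(\bm x)-\frac nk C_{ij}(k\bm x/n)|=O_p(k^{-1/2})$, taken from (S.47) of \citet{EV20}. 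This is multiplied by $\int_{1/k}^{\e^c}\ell|\log t|^{\ell-1}t^{-1}\,\d t=O((\log k)^\ell)$, so the logarithmic blow-up near $t=0$ is absorbed by the $k^{-1/2}$ rate. The bias is handled by dominated convergence using $q^{-1}C_{ij}(qx,q)\le x$.

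\textbf{Your route.} You truncate at $w_i=\delta$. On $\{w_i\ge\delta\}$ you need only pointwise consistency of the rank-based tail copula at finitely many points, with no rate. The singular region $\{w_i<\delta\}$ you handle deterministically: after dropping $\I\{w_j\le1\}$, which is legitimate because $(c-\log w_i)_+^\ell\ge0$, the sum depends only on the ranks of margin $i$. Since those ranks are a permutation of $\{1,\dots,n\}$ and $u\mapsto(c-\log u)^\ell$ is decreasing, the sum is bounded by $\frac{n+1}{n}\int_0^\delta(c-\log u)^\ell\,\d u$ uniformly in $n$.

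\textbf{Trade-offs.} Your argument needs less probabilistic input and extends to any integrand with a monotone, integrable singularity. The paper's argument is shorter given the cited rate and gives an explicit order for the stochastic error. It also sets up the representation \eqref{eq:RepMomentHat}, which is reused in the proof of \cref{pro:WeakCovMoment}.

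\textbf{One small imprecision.} For an arbitrary MGP, the limit-side tail term should not be justified by saying that $E+Z$ has finite moments, since $Z_i^{(j)}$ may equal $-\infty$ with positive probability. Use instead $\nu(\{w_i\le x,\,w_j\le1\})=R_{ij}(x,1)\le x$, or equivalently $\pr(Y_i^{(j)}>y)\le\e^{-y}$ from \cref{lemma:ExistTruVario}. The same monotone comparison then bounds that term by $\int_0^\delta(c-\log u)^\ell\,\d u$, exactly as on the empirical side.
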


For a \HR{} MGPD, we show in~\cref{thm:WeakConstMoment} that the proposed moment estimators for the variogram are well-defined and converge weakly to the true values. The standing assumption is stated as follows.

\begin{assumption}
\label{Asp:HRGPD}
Assume $\bY$ is a \HR{} MGP distributed random vector parameterized by a $d\times d$ dimensional variogram matrix $\bm{\Gamma}\in \mathcal{M}$ stated in~\eqref{eq:NegDefMatrix}.
Suppose $\bm{X}_{t}=(X_{t1},\ldots,X_{td})$, $t=1,\ldots,n$, are independent copies of a random vector $\bm{X}$, which has continuous margins and the marginally transformed vector of $\bm{X}$ in \eqref{eq:marginal_Transform} lies in the domain of attraction of $\bY$ in the sense of~\eqref{eq:MPD}.
\end{assumption}

\begin{theorem}
\label{thm:WeakConstMoment}
Suppose that \cref{Asp:IntermediateSeq,Asp:HRGPD} hold. For $i,j\in V$ and $i\neq j$, the moment estimators in~\eqref{eq:M1} are well-defined with probability tending to one, and $\hat{\gamma}^{\mathrm{M},(\ell)}_{n,ij} \overset{\pr} \to \gamma_{ij}$ as $n\to \infty$ for $\ell=1,2$.
\end{theorem}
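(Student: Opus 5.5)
The plan is to combine \cref{pro:weakconsistency} with the strict monotonicity from \cref{lemma:monotonicity}, using a continuous-inverse argument. Fix $c\ge 0$, $\ell\in\{1,2\}$, and $i\neq j$. By \cref{pro:weakconsistency}, both $\hat e^{(\ell)}_{n,ij}$ and $\hat e^{(\ell)}_{n,ji}$ converge in probability to their limits. By \cref{lemma:monotonicity} and the symmetry $\gamma_{ij}=\gamma_{ji}$ (so that $Z_i^{(j)}\overset{d}=Z_j^{(i)}$), both limits equal $e^{(\ell)}(\gamma_{ij})$. Hence the symmetrized average on the right-hand side of \eqref{eq:M1}, call it $\bar e_n$, satisfies $\bar e_n\overset{\pr}\to e^{(\ell)}(\gamma_{ij})$.

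Next I would establish the properties of $\gamma\mapsto e^{(\ell)}(\gamma)$ on $[0,\infty)$. It is continuous on $(0,\infty)$ by the explicit formulas \eqref{eq:etildeFirst}--\eqref{eq:etildeSecond}, and continuous at $0$ by the definition of $e^{(\ell)}(0,c)$ as a limit. It is strictly decreasing by the lemma; strictness extends to $[0,\infty)$ by continuity. It therefore maps $[0,\infty)$ homeomorphically onto the interval $(L_\ell, U_\ell]$, where $U_\ell=e^{(\ell)}(0)$ and $L_\ell=\lim_{\gamma\to\infty}e^{(\ell)}(\gamma)$.

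To identify $L_\ell$, I would use $(E+Z+c)_+^\ell\le (E+c+|Z|)^\ell$ for dominated-type control, or simply compute from the formulas. Since $(c\pm\gamma/2)/\sqrt\gamma\to\mp\infty$ and $\sqrt\gamma\,\phi(\cdot)$, $\gamma^2\Phi(\cdot)$ decay like Gaussian tails, one gets $L_\ell=0$. Because $\gamma_{ij}>0$ for $\bm\Gamma\in\mathcal M$ with $i\ne j$ (strict conditional negative definiteness applied to $u=e_i-e_j$ gives $-2\gamma_{ij}<0$), the true value $e^{(\ell)}(\gamma_{ij})$ lies in the open interval $(0,U_\ell)$, i.e. in the interior of the range.

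Well-definedness with probability tending to one then follows: for small $\varepsilon>0$, the event $|\bar e_n-e^{(\ell)}(\gamma_{ij})|<\varepsilon$ has probability tending to one, and on it $\bar e_n\in(L_\ell,U_\ell)$. Hence \eqref{eq:M1} has a unique solution $\hat\gamma_n=(e^{(\ell)})^{-1}(\bar e_n)$. Consistency follows from the continuous mapping theorem, since the inverse of a continuous strictly monotone function on an interval is continuous. Concretely, for $\delta\in(0,\gamma_{ij})$ put $\varepsilon=\min\{e^{(\ell)}(\gamma_{ij}-\delta)-e^{(\ell)}(\gamma_{ij}),\,e^{(\ell)}(\gamma_{ij})-e^{(\ell)}(\gamma_{ij}+\delta)\}>0$. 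Then $|\bar e_n-e^{(\ell)}(\gamma_{ij})|<\varepsilon$ implies $|\hat\gamma_n-\gamma_{ij}|<\delta$.

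The main obstacle is only mild bookkeeping: checking that the limit of $e^{(\ell)}$ as $\gamma\to\infty$ lies strictly below the target value, and verifying continuity at $\gamma=0$. Both are routine from the closed forms. Everything else is a direct consequence of \cref{pro:weakconsistency} and \cref{lemma:monotonicity}.
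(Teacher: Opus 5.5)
Your proposal is correct and follows essentially the same route as the paper. Both arguments use convergence in probability of the symmetrized clipped moments via \cref{pro:weakconsistency}, followed by inversion of the continuous, strictly decreasing map $\gamma\mapsto e^{(\ell)}(\gamma,c)$ and the continuous mapping theorem, which you make explicit with an $\varepsilon$--$\delta$ argument. Your remark that strict conditional negative definiteness, applied to $\bm{u}$ equal to the difference of the $i$-th and $j$-th unit vectors, forces $\gamma_{ij}>0$ is a worthwhile refinement: it places $e^{(\ell)}(\gamma_{ij},c)$ strictly inside the range of the map and makes the paper's separate treatment of the boundary case $\gamma_{ij}=0$ unnecessary.
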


Recall that $F_i$ is the marginal distribution of $X_i$, $i\in V$. Let
\[
U_i=1-F_i(X_i),\quad i=1,\ldots,d,
\]
and denote the joint distribution function of $\bm U=(U_1,\ldots,U_d)$ by $C(\bm{x})$.
The convergence of multivariate threshold exceedances in \eqref{eq:MPD} is equivalent to the existence of the limit
\begin{equation}
\label{eq:UTDF}
\lim_{q\to 0} q^{-1} \pr\left(1-F_{i}(X_i)\le q x_{i}, i\in V\right)=
\lim_{q\to 0} q^{-1} C(q \bm{x})=R(\bm{x})
\end{equation}
for $\bm{x}\in[0,\infty]^d \setminus \{(\infty,\ldots,\infty)\}$, where $R(\bm{x})$ is called the \emph{tail copula} of $\bm{X}$, see, e.g., \citet{S06} and \citet{chiapino2019identifying}.

For any non-empty set $I\subset V$ and vector $\bm{x}_I=(x_i,i\in I)\in[0,\infty]^{|I|} \setminus \{(\infty,\ldots,\infty)\}$, define $R_{I}(\bm{x}_I)$ as the value of the function $R(\bm{x})$ evaluated at the point $\bm{x}$ whose components are $x_i$ for $i\in I$ and $\infty$ for $i\in V\setminus I$.
Let
\[
\dot{R}_{I}(\bm{x}_{I})=(\dot{R}_{I}^{i}(\bm{x}_{I}),i\in I)=(\partial R_I(\bm{x}_I)/\partial x_i, i\in I)
\]
be the vector of its first-order partial derivatives.

Assume $W$ is a mean-zero Gaussian process on $[0,\infty]^{d}\setminus \{(\infty,\ldots,\infty)\}$ with continuous trajectories and covariance function
\begin{equation}
\label{eq:covW}
\E\rbr{W(\bm{x})W(\bm{y})}=R(\bm{x}\wedge\bm{y}), \qquad \bm{x}, \bm{y}\in [0,\infty]^{d} \setminus \{(\infty,\ldots,\infty)\},
\end{equation}
with $\bm{x} \wedge \bm{y}=(x_i \wedge y_i, i\in V)$.
For any nonempty set $I\subseteq V$ and $\bm{x}_I\in [0,\infty]^{|I|}\setminus \{(\infty,\ldots,\infty)\}$,
define
\[
W_{I}(\bm{x}_{I}) = W(\bm{y}),
\]
where \(\bm{y} \in [0, \infty]^{d} \setminus \{(\infty, \ldots, \infty)\}\) is a vector such that \(y_i = x_i\) for \(i \in I\) and \(y_i = \infty\) for \(i \notin I\). In particular, $W_{i}(x_i)=W(\infty,\ldots, \infty, x_i, \infty, \ldots, \infty)$, where $x_i$ appears in the $i$-th component.
Define
\begin{align}
\label{eq:limitprocess}
B_{I}(\bm{x}_{I})=W_{I}(\bm{x}_{I})-\sum_{i\in I}\dot{R}_{I}^{i}(\bm{x}_{I})W_{i}(x_i),
\end{align}
which is a zero-mean stochastic process on $\bm{x}_I\in [0,\infty]^{|I|}\setminus \{(\infty,\ldots,\infty)\}$.

Next, we show the asymptotic normality of the empirical moments in~\cref{pro:WeakCovMoment}. Based on this result and using the delta method, the asymptotic normality of the moment estimators can be established, as shown in \cref{thm:AsyNormalityMoment}. A second-order condition, as stated in~\cref{Asp:SecondOrderCdt}, is required to control the convergence rate in~\eqref{eq:MPD}. Since the subsequent results rely on the continuity of the partial derivatives of the tail copula, we restrict our attention to the case $\gamma_{ij} > 0$ for all $i, j \in V$ with $i \ne j$, thereby excluding the degenerate case $\gamma_{ij}=0$, in accordance with the assumption in~\cref{Asp:PositiveVariogram}.

\begin{assumption}
\label{Asp:SecondOrderCdt}
There exist constants $\xi, K \in (0,\infty)$ such that for any $I\subseteq V$ with $|I|=2$
and $q\in (0,1)$, we have
\[
\sup_{x_{I}\in[1,\infty]^{|I|}} \big|\pr\left(\bm{F}_{I}(\bm{X}_{I})\le 1-q/\bm{x}_{I} \mid \bm{F}_{I}(\bm{X}_{I})\nleqslant 1-q \right)-\pr(\bm{Y}_{(I)}\le \bm{x})\big|\le K q^{\xi},
\]
where $\bm{Y}_{(I)}$ is the random vector obtained from~\eqref{eq:MPD} with $\bm{X}$ replaced by $\bm{X}_{I}$.
\end{assumption}

\begin{assumption}
\label{Asp:PositiveVariogram}
The off-diagonal elements of the variogram matrix $\bm{\Gamma}$ associated with the \HR{} MGPD in~\cref{Asp:HRGPD} are strictly positive, that is, $\gamma_{ij}>0$ for all $i,j\in V$ and $i\neq j$.
\end{assumption}

\begin{proposition}
\label{pro:WeakCovMoment}
If \cref{Asp:IntermediateSeq,Asp:HRGPD,Asp:SecondOrderCdt,Asp:PositiveVariogram} hold with $k=o\left(n^{\xi/\left(\xi+\frac{1}{2}\right)}\right)$, then
\begin{multline*}
\left(\sqrt{k}\left\{\hat{e}_{n,ij}^{(\ell)}(k,c)  -e^{(\ell)}(\gamma_{ij},c)\right\},  \, i,j \in V, \, i\neq j, \, \ell=1,2\right) \\
\overset{d} \to \left(\int_{0}^{\exp(c)} \frac{B_{ij}(x,1) \cdot \ell \cdot (-\log x + c)^{\ell-1}}{x} \d x, \, i,j \in V, \, i\neq j, \, \ell=1,2 \right)
\end{multline*}
as $n\to \infty$,
where for each pair $(i,j)\in V \times V$ such that $i\neq j$, $B_{ij}(x,1)$ is the stochastic process defined in~\eqref{eq:limitprocess} and given by
\begin{equation}
\label{eq:ProcessBij}
B_{ij}(x,1)=W_{ij}(x,1)-\dot{R}_{ij}^{i}(x,1)W_{i}(x)-\dot{R}_{ij}^{j}(x,1)W_{j}(1).
\end{equation}
\end{proposition}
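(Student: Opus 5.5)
The approach is to write $\hat{e}_{n,ij}^{(\ell)}$ as an integral functional of the empirical tail copula and then use the known weak convergence of the empirical tail copula process (in the spirit of \citet{S06} and \citet{EKS2012}). Set $\hat U_{ti}=1-\hat F_i(X_{ti})$. Then $\hat Y_{ti}+c>0$ if and only if $\hat U_{ti}< (k/n)e^{c}$. Write $g_\ell(x)=(-\log x+c)_+^{\ell}$ for $x>0$, so that $(\hat Y_{ti}+c)_+^\ell=g_\ell\big(n\hat U_{ti}/k\big)$. Since $g_\ell(e^{c})=0$ and $g_\ell'(x)=-\ell(-\log x+c)^{\ell-1}/x$ on $(0,e^c)$, we have $g_\ell(u)=\int_u^{e^c}\ell(-\log x+c)^{\ell-1}x^{-1}\,\d x$. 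Fubini then gives
\[
\hat{e}_{n,ij}^{(\ell)}=\int_0^{e^c}\frac{\ell(-\log x+c)^{\ell-1}}{x}\,\hat R_{n,ij}(x,1)\,\d x,\qquad
\hat R_{n,ij}(x,y)=\frac1k\sum_{t=1}^n\I\{\hat U_{ti}\le kx/n,\ \hat U_{tj}\le ky/n\}.
\]
This holds up to the discreteness and boundary conventions in $\hat F$, which only create $O(1/k)$ errors. The same Fubini step applied to the limit, using $\pr(Y_i^{(j)}+c>-\log x)=R_{ij}(x,1)$, gives $e^{(\ell)}(\gamma_{ij},c)=\int_0^{e^c}\ell(-\log x+c)^{\ell-1}x^{-1}R_{ij}(x,1)\,\d x$.

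Therefore
\[
\sqrt k\big(\hat e^{(\ell)}_{n,ij}-e^{(\ell)}\big)=\int_0^{e^c}\frac{\ell(-\log x+c)^{\ell-1}}{x}\sqrt k\big(\hat R_{n,ij}(x,1)-R_{ij}(x,1)\big)\,\d x,
\]
and the task reduces to three pieces.

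First, I will invoke the functional CLT for the empirical tail copula with estimated margins. Under \cref{Asp:SecondOrderCdt}, the rate $k=o(n^{\xi/(\xi+1/2)})$ is exactly what makes the bias $\sqrt k\{(n/k)C(k\bm x/n)-R(\bm x)\}=O(\sqrt k (k/n)^{\xi})\to0$ uniformly on compacts. Continuity of the partial derivatives $\dot R_{ij}$ on $(0,\infty)^2$, which holds for \HR{} with $\gamma_{ij}>0$ (\cref{Asp:PositiveVariogram}), then gives joint weak convergence of $\sqrt k(\hat R_{n,ij}-R_{ij})$ to $B_{ij}$ for all pairs simultaneously, in $\ell^\infty$ on $[\epsilon,e^c]\times[0,1]$ for each $\epsilon>0$. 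Second, the map $f\mapsto\int_\epsilon^{e^c}w_\ell(x)f(x,1)\,\d x$ is continuous on that space, so the continuous mapping theorem yields the truncated limit jointly in $(i,j,\ell)$.

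Third, and this is the main obstacle, the weight $x^{-1}(-\log x+c)^{\ell-1}$ is not integrable at $0$. I therefore need to show that
\[
\lim_{\epsilon\downarrow0}\limsup_n\pr\Big(\Big|\int_0^\epsilon w_\ell(x)\sqrt k(\hat R_{n,ij}-R_{ij})(x,1)\,\d x\Big|>\eta\Big)=0,
\]
and that the limit integral converges a.s. For the limit, $\operatorname{Var}B_{ij}(x,1)\lesssim x$ since $R_{ij}(x,1)\le x$ and $\dot R$ is bounded, so $\E\int_0^\epsilon w_\ell|B_{ij}|\lesssim\int_0^\epsilon x^{-1/2}|\log x|\,\d x\to0$. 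For the empirical part, I will bound $|\hat R_{n,ij}(x,1)-R_{ij}(x,1)|$ by marginal quantities, using $\hat R_{n,ij}(x,1)\le \hat R_{n,i}(x)$ and $R_{ij}(x,1)\le x$. I will then apply weighted empirical process bounds of the form $\sup_{x\le\epsilon}\sqrt k|\hat R_{n,i}(x)-x|/x^{1/2-\delta}=O_\pr(1)$ (Csörgő–Horváth type weighted approximations for uniform tail empirical and quantile processes), which make the remainder $O_\pr\big(\int_0^\epsilon x^{-1/2-\delta}|\log x|\,\d x\big)$. The region below $x\approx 1/k$ needs separate handling: there $\hat R_{n,ij}(x,1)$ vanishes except for the smallest-rank observations, and its contribution is $\sqrt k\,\int_0^{1/k}|\log x|^{\ell-1}\,\d x$ plus a term $k^{-1/2}\log^\ell k\to0$ coming from the single maximum. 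A final standard approximation argument (Billingsley, Theorem 3.2) combines these steps to give the stated joint convergence.
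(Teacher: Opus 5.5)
Your integral representation of $\hat e^{(\ell)}_{n,ij}$ and $e^{(\ell)}(\gamma_{ij},c)$ against $w_\ell(x)=\ell(-\log x+c)^{\ell-1}/x$ is fine. So are the compact-set functional CLT with the bias rate, the estimate for the Gaussian limit near zero, and the treatment of $(0,1/k)$. The gap is in the one non-routine step: the empirical part of the integral over $(1/k,\epsilon)$. The inequalities $0\le\hat R_{n,ij}(x,1)\le\hat R_{n,i}(x)$ and $0\le R_{ij}(x,1)\le x$ only give
\[
-R_{ij}(x,1)\;\le\;\hat R_{n,ij}(x,1)-R_{ij}(x,1)\;\le\;\hat R_{n,i}(x).
\]
This interval has length of order $x$ (for \HR{} with $\gamma_{ij}>0$, $R_{ij}(x,1)\sim x$ as $x\downarrow0$), not of order $x^{1/2-\delta}/\sqrt{k}$. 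After multiplying by $\sqrt{k}$ and integrating against $w_\ell$, you are left with a term of size $\sqrt{k}\int_{1/k}^{\epsilon}\ell(c-\log x)^{\ell-1}\,\d x\asymp\sqrt{k}\,\epsilon\,|\log\epsilon|^{\ell-1}$. This diverges for every fixed $\epsilon$.

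In fact, with rank-based margins, $\hat R_{n,i}(x)=\min\{n,\lfloor (n+1)kx/n\rfloor\}/k$ is not even random. The marginal weighted bounds you invoke are therefore trivially true and carry no information about the $O_{\pr}(\sqrt{x/k})$ fluctuations of $\hat R_{n,ij}(x,1)$. The cancellation that makes $\sqrt{k}(\hat R_{n,ij}-R_{ij})$ small near $x=0$ is intrinsically bivariate, and it is lost once you dominate by marginals.

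What is needed is a weighted bound for the bivariate process itself, for instance $\sup_{1/k\le x\le e^c}x^{-\eta}\sqrt{k}\,|\hat R_{n,ij}(x,1)-R_{ij}(x,1)|=O_{\pr}(1)$ for some $\eta\in(0,1/2)$. Better still is weighted convergence to $B_{ij}$, since then $x^{\eta-1}|\log x|^{\ell-1}$ is integrable at zero. The paper obtains this in two steps:
\begin{enumerate}
\item \cref{lemma:WTP} is a weighted functional CLT for $\nu_{n,ij}(x,y)/(x\wedge y)^{\eta}$, proved by bracketing.
\item \cref{lemma:UniformConverge} writes $\hat R_{n,ij}(x,1)=T_{n,ij}(S_{ni}(x),S_{nj}(1))$. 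It then controls three pieces uniformly with weight $x^{\eta}$: the process at the random marginal quantiles, the second-order bias, and $\sqrt{k}\{R_{ij}(S_{ni}(x),S_{nj}(1))-R_{ij}(x,1)\}$. \cref{lemma:inequalityadd,lemma:ConRatePartialDer} handle the Gaussian limit near zero and the $\dot R^{j}_{ij}$ term.
\end{enumerate}

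A more elementary repair in your spirit is plausible, though it needs care. Sandwich $T_{n,ij}(S_{ni}(x),S_{nj}(1))$ between its values at deterministic brackets $x\pm Mx^{1/2-\delta}/\sqrt{k}$ and $1\pm M/\sqrt{k}$; this is where weighted quantile bounds genuinely enter. Then use $\E\{\nu_{n,ij}(x,y)^2\}\le x\wedge y$ at fixed arguments, together with $\dot R^{j}_{ij}(x,y)\le x/y$ and the second-order bias bound. Either way, some genuinely bivariate control of this kind has to replace the marginal domination.
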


\begin{theorem}
\label{thm:AsyNormalityMoment}
Under the assumptions of \cref{pro:WeakCovMoment}, we have
\begin{multline*}
    \left(\sqrt{k}(\hat{\gamma}_{n,ij}^{\mathrm{M},(\ell)}-\gamma_{ij}); \; i,j \in V, \, i \neq j\right) \\
    \overset{d}\to
    \left(\frac{1}{2  e^{(\ell),'}(
    \gamma_{ij}) }
        \int_{0}^{\exp(c)} \frac{\left(B_{ij}(x,1) + B_{ji}(x,1) \right) \cdot \ell \cdot (-\log x + c)^{\ell-1} }{x} \d x ; \, i,j \in V, \, i \neq j\right)
\end{multline*}
as $n\to \infty$, where $B_{ij}(x,1)$ is given in~\eqref{eq:ProcessBij}, and where $e^{(\ell),'}(\gamma)$ is the derivative of $e^{(\ell)}(\gamma)$ in \eqref{eq:etildeFirst} and \eqref{eq:etildeSecond} with respect to $\gamma$, given by
\begin{align*}
    e^{(1),'}(\gamma)
    &=
    -\frac{1}{2}\,\Phi\left(\frac{c-\tfrac{\gamma}{2}}{\sqrt{\gamma}}\right), &
    e^{(2),'}(\gamma)
    &=\left(\frac{\gamma}{2} - c \right)  \Phi\left(\frac{ c -\frac{\gamma}{2}}{\sqrt{\gamma}}\right)-\sqrt{\gamma} \, \phi\left(\frac{ c - \frac{\gamma}{2}}{\sqrt{\gamma}}\right).
\end{align*}
\end{theorem}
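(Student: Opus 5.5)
The plan is a delta-method argument on top of \cref{pro:WeakCovMoment}. Fix $c \ge 0$ and $\ell\in\{1,2\}$, and write $\bar e_{n,ij}^{(\ell)} = \frac12\{\hat e_{n,ij}^{(\ell)} + \hat e_{n,ji}^{(\ell)}\}$. Since $\gamma_{ij}=\gamma_{ji}$, we have $e^{(\ell)}(\gamma_{ij})=e^{(\ell)}(\gamma_{ji})$. \Cref{pro:WeakCovMoment} gives joint convergence over all ordered pairs, so by the continuous mapping theorem
\[
\sqrt{k}\bigl\{\bar e_{n,ij}^{(\ell)} - e^{(\ell)}(\gamma_{ij})\bigr\} \overset{d}\to \frac12 \int_0^{\exp(c)} \frac{\bigl(B_{ij}(x,1)+B_{ji}(x,1)\bigr)\,\ell\,(-\log x + c)^{\ell-1}}{x}\,\d x,
\]
jointly in $(i,j)$ with $i\neq j$.

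Next I would invert the moment map. By \cref{lemma:monotonicity}, $\gamma\mapsto e^{(\ell)}(\gamma)$ is continuous and strictly decreasing on $(0,\infty)$. Let $J_\ell$ be its image, which is an open interval. By \cref{Asp:PositiveVariogram}, $\gamma_{ij}>0$, so $e^{(\ell)}(\gamma_{ij})$ lies in the interior of $J_\ell$. The inverse $g_\ell=(e^{(\ell)})^{-1}$ is therefore well defined near this point. On the event $\{\bar e_{n,ij}^{(\ell)}\in J_\ell\}$, which has probability tending to one by \cref{thm:WeakConstMoment}, we have $\hat\gamma^{\mathrm{M},(\ell)}_{n,ij}=g_\ell(\bar e_{n,ij}^{(\ell)})$.

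To apply the delta method I need $g_\ell$ to be differentiable at $e^{(\ell)}(\gamma_{ij})$ with derivative $1/e^{(\ell),'}(\gamma_{ij})$. This requires $e^{(\ell)}$ to be continuously differentiable on $(0,\infty)$ with nonzero derivative at $\gamma_{ij}$, which follows from the stated formulas. I would obtain these formulas by differentiating \eqref{eq:etildeFirst} and \eqref{eq:etildeSecond} directly. Writing $a=(c-\gamma/2)/\sqrt\gamma$ and $b=-(c+\gamma/2)/\sqrt\gamma$, the key identity is $e^{c}\phi(b)=\phi(a)$. With it, the $\phi$-terms cancel and leave $e^{(1),'}(\gamma)=-\tfrac12\Phi(a)<0$.

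An easier route avoids this calculus. From the representation $Y_i^{(j)}=E+Z$ with $Z\sim N(-\gamma/2,\gamma)$, we get $e^{(1)}(\gamma)=\E\,\psi(Z)$ where $\psi(z)=\E(E+z+c)_+$. Since the parameter enters as $\gamma/2$ in both the mean and the variance, a heat-equation argument gives
\[
\partial_\gamma e^{(1)} = \tfrac12\E\{\psi''(Z)-\psi'(Z)\} = -\tfrac12\pr(Z>-c).
\]
The same device handles $\ell=2$. Strict negativity of $e^{(2),'}$ is already guaranteed by the strict monotonicity in \cref{lemma:monotonicity}, and it can be read off the formula or from $-\E\{(E+Z+c)_+\,\text{-type terms}\}$.

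Finally, the delta method applied coordinatewise gives
\[
\sqrt k\bigl(\hat\gamma^{\mathrm{M},(\ell)}_{n,ij}-\gamma_{ij}\bigr)=\frac{\sqrt k\bigl\{\bar e_{n,ij}^{(\ell)}-e^{(\ell)}(\gamma_{ij})\bigr\}}{e^{(\ell),'}(\gamma_{ij})}+o_{\pr}(1).
\]
The convergence is joint because the linearization is a fixed diagonal linear map applied to a jointly convergent vector. Combined with the first display, this yields the claimed limit.

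I expect the main obstacle to be the derivative computation for $\ell=2$, namely verifying the closed form and its nonvanishing, together with justifying differentiation under the expectation. Everything else is routine bookkeeping given \cref{pro:WeakCovMoment}.
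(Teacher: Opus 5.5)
Your proposal matches the paper's proof: joint convergence of the symmetrised moments $\frac12\{\hat e_{n,ij}^{(\ell)}+\hat e_{n,ji}^{(\ell)}\}$ from \cref{pro:WeakCovMoment}, followed by the delta method through the inverse of the strictly decreasing map $\gamma\mapsto e^{(\ell)}(\gamma)$. The inverse has derivative $1/e^{(\ell),'}(\gamma_{ij})$ at the point $e^{(\ell)}(\gamma_{ij})$, which is interior because of \cref{Asp:PositiveVariogram}. Your heat-equation computation of the derivatives, giving $e^{(1),'}(\gamma)=-\tfrac12\pr(Z>-c)$ and $e^{(2),'}(\gamma)=-\E(Z+c)_+<0$ for $Z\sim N(-\gamma/2,\gamma)$, is a clean alternative to the paper's direct differentiation and Mills-ratio sign argument in the proof of \cref{lemma:monotonicity}. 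One caveat: strict monotonicity by itself would not rule out a vanishing derivative, so what the delta method needs is this explicit strict sign.
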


\section{Simulation study}
\label{sec:simstudy}

In this section, we study the finite-sample behavior of the proposed estimators on simulated data. To investigate the performance of the moment estimator in~\eqref{eq:M1}, we compare its finite-sample behavior with that of the empirical variogram estimator in~\eqref{eq:MeanEmpVario}. For convenience, we set $c = -\log a$ with $a \in (0,1]$. All plots in the simulation study and empirical analysis are presented in terms of $a$, since the range $(0,1]$ is easier for selecting values.

We first generate $n=1000$ independent samples
$\bm{X}_{1}^{\star},\ldots,\bm{X}_n^{\star}$ from two \HR{} max-stable models,
a 10-dimensional model with randomly generated variogram matrix $\bm{\Gamma}_1$ (see \cref{sec:gamma2}),
and a 5-dimensional model with variogram matrix
\begin{align*}
\bm{\Gamma}_2=
\begin{pmatrix}
0 & 3 & 4 & 7 & 6 \\
3 & 0 & 3 & 8 & 5  \\
4 & 3 & 0 & 7 & 8  \\
7 & 8 & 7 & 0 & 11  \\
6 & 5 & 8 & 11 & 0
\end{pmatrix}.
\end{align*}
Both models are in the domains of attraction of the corresponding \HR{} MGPDs with the same variogram matrices.
The matrix $\bm{\Gamma}_1$ is obtained by first generating a positive definite matrix $\bm \Sigma$ and then computing the variogram matrix using the function \texttt{Sigma2Gamma} from the \texttt{graphicalExtremes} package in software \texttt{R}. The second distribution is the example considered in \citet{ELV2021} with the name of \emph{non-faithful \HR{} distribution}, since $Y_2$ and $Y_4$ are conditionally independent (in the sense of extremal conditional independence defined in~\citet{EH2020}) given $Y_1$ and $Y_3$.
In order to perturb the samples, we add standard normally distributed noise. To be precise, we set
\begin{equation*}
\bm{X}_t=\bm{X}^{\star}_t+|\bm{\epsilon}_t|, \qquad t=1,\ldots,n,
\end{equation*}
where $\bm{\epsilon}_t=(\epsilon_{ti}, \, i\in V)$, for $t=1,\ldots, n$, are independent standard normal random variables, independent of $\bm{X}^{\star}_t$, $t=1,\ldots, n$.

Under these simulation settings, we investigate the finite-sample performance of the proposed two moment estimators and compare it with the empirical variogram estimator. The performance of the estimators is evaluated by computing the average relative squared error between the estimates $\hat{\gamma}_{n,ij}$ and their true counterparts $\gamma_{ij}$ of the variogram matrix $\bm{\Gamma}$, defined as
\begin{equation*}
L(\hat{\bm{\Gamma}},\bm{\Gamma})=\left\{\frac{2}{d(d-1)}\sum_{i,j\in V, \, i < j}\left(\hat{\gamma}_{n,ij}/\gamma_{ij}-1\right)^2\right\}^{1/2},
\end{equation*}
where $\hat{\bm{\Gamma}}=(\hat{\gamma}_{n,ij})_{i,j\in V}$ denotes the estimates of $\bm{\Gamma}$ based on one of the aforementioned estimators. In addition, we also measure the average distance between the model-based and empirical (data-based) tail dependence coefficients, namely,
\begin{equation}
\label{eq:D}
    D(\hat{\mathcal{\bm{\chi}}}, \hat{\mathcal{\bm{\chi}}}^{\rm{EST}})=\frac{2}{d(d-1)}\sum_{i,j\in V, \, i < j}|\hat{\chi}_{ij}-\hat{\chi}_{ij}^{\rm{EST}}|
\end{equation}
where $\hat{\mathcal{\bm{\chi}}}=(\hat{\chi}_{ij})_{i,j\in V}$ and $\hat{\mathcal{\bm{\chi}}}^{\rm{EST}}=(\hat{\chi}_{ij}^{\rm{EST}})_{i,j\in V}$ denote the empirical and model-based tail dependence coefficient matrix, respectively. More specifically, for each pair $(i,j)\in V^2$ with $i \ne j$, the empirical tail dependence coefficient is calculated by
\[
    \hat{\chi}_{ij}
    = \frac{1}{k} \sum_{t=1}^n \I\!\left\{1-\hat{F}_i(X_{ti})\le \frac{k}{n}, 1-\hat{F}_j(X_{tj})\le \frac{k}{n}\right\},
\]
with the same $k$ used in the estimation of the variogram matrix, and the model-based tail dependence coefficients are obtained via
\[
    \hat{\chi}_{ij}^{\rm{EST}}
    = 2 \left\{1-\Phi\left(\sqrt{\hat{\gamma}_{n,ij}}/2\right)\right\}.
\]

The simulation results for samples from the $10-$dimensional \HR{} model are shown in \crefrange{fig:MeanerrorM1}{fig:combined_Gamma1}. The impact of the clipping level $c$ (or $a$ equivalently) on the performance of the moment estimators $\hat{\gamma}_{n,ij}^{\mathrm{M},(\ell)}(k,c)$, $\ell=1,2$, is examined first. \Cref{fig:MeanerrorM1,fig:MeanerrorM2} present the mean $L$ and $D$ corresponding to the first-and second-order moment estimator with
$a=\exp(-c)=0.25,0.5$ and $k=50,75,\ldots,250$, respectively.
For estimator $\hat{\gamma}_{n,ij}^{\mathrm{M,(1)}}(k,-\log a)$, we see that smaller values of $a$ yield better performance for small $k$, whereas larger values of $a$ become preferable as $k$ increases. This indicates a trade-off between $a$ and $k$, and suggests the existence of an optimal choice of $k$ in terms of $L$. In contrast, the metric $D$ exhibits a different trend, where smaller $a$ always results in smaller deviations. For estimator $\hat{\gamma}_{n,ij}^{\mathrm{M,(2)}}(k,-\log a)$ in \cref{fig:MeanerrorM2}, both $L$ and $D$ consistently indicate that smaller $a$ values yield lower estimation bias.

To compare the performance of the estimators, we set $a=0.25$ for $\hat{\gamma}_{n,ij}^{\mathrm{M},(\ell)}(k,-\log a)$, $\ell=1,2$, and show the boxplots of $L$ and $D$ values based on $300$ replications for each of the three estimators in \cref{fig:Cmp10D}. It suggests that the first-order moment estimator $\hat{\gamma}_{n,ij}^{\mathrm{M},(1)}(k,-\log a)$ outperforms the empirical variogram estimator, especially when $k$ is small, whereas the second-order moment estimator only shows superior performance for large $k$, in terms of $L$. In \cref{fig:combined_Gamma1}, we further analyze the averaged bias, variance and MSE over $300$ replications of the three estimators $\hat{\gamma}_{n,ij}^{\mathrm{EMP}}(k)$, $\hat{\gamma}_{n,ij}^{\mathrm{M},(1)}(k,-\log a)$ and $\hat{\gamma}_{n,ij}^{\mathrm{M},(2)}(k,-\log a)$ for the element $\gamma_{12}=0.89$ of $\bm{\Gamma}_1$, where $a=0.25$ and $k=50,75,\ldots,250$. The results indicate that, the empirical estimator tends to have a larger bias but smaller variance. The proposed first-order moment estimator is particularly advantageous for small $k$, and in some cases it uniformly dominates the empirical estimator in terms of MSE. Additional results for other elements of $\bm{\Gamma}_1$ can be found in \crefrange{fig:Cmp10DBias}{fig:Cmp10DMSE} in \cref{sec:gamma2}.

The corresponding results for random samples from the $5$-dimensional \HR{} distribution with variogram matrix $\bm{\Gamma}_2$ are illustrated in \crefrange{fig:MeanerrorM15D}{fig:combined_Gamma2}. The estimation error $L$ varies with $a$ in a manner similar to those of $\hat{\gamma}_{n,ij}^{\mathrm{M},(1)}(k,-\log a)$ observed for the $10$-dimensional \HR{} distribution.
Moreover, the value of $D$ exhibits greater stability with respect to changes in $k$.
It could be noted in \cref{fig:Cmp5D} that, under such circumstances, namely, when the extremal dependence between certain components is relatively weak or asymptotically conditional independent, the moment estimators yield improved performance. We see that both moment estimators $\hat{\gamma}_{n,ij}^{\mathrm{M},(\ell)}$, $\ell=1,2$, have much lower estimation error than the empirical variogram estimator. In this case, \cref{fig:combined_Gamma2} indicates that $\hat{\gamma}_{n,ij}^{\mathrm{M},(2)}$ achieves the smallest bias and MSE for large values of $k$, while the empirical variogram estimator still performs best in terms of variance. For more details on other elements, see \crefrange{fig:Cmp5DBias}{fig:Cmp5DMSE} in \cref{sec:gamma2}.

Indeed, what distinguishes the two matrices $\bm{\Gamma}_1$ and $\bm{\Gamma}_2$ is that the dependence in $\bm{\Gamma}_2$ is weaker.
The simulations show that, for the \HR{} models with stronger dependence, the moment estimator tends to achieve better performance with relatively small values of $a$ (or $c$ equivalently). However, such a pattern is not clear for weakly dependent models.
Moreover, in both cases, the clipped moment estimators could provide improvements over the empirical variogram estimator, especially when the dependence is weak. Overall, the first-moment estimator with $a = 0.25$ comes out as the best one: in the two $L$ and $D$-plots, it has the lowest error in almost all cases. In addition, compared with the empirical variogram estimator, the two moment estimators appear to be less sensitive to the choice of $k$ in view of $D$-plots, when the dependence is weak.

\begin{figure}
\centering
\includegraphics[width=0.45\textwidth]{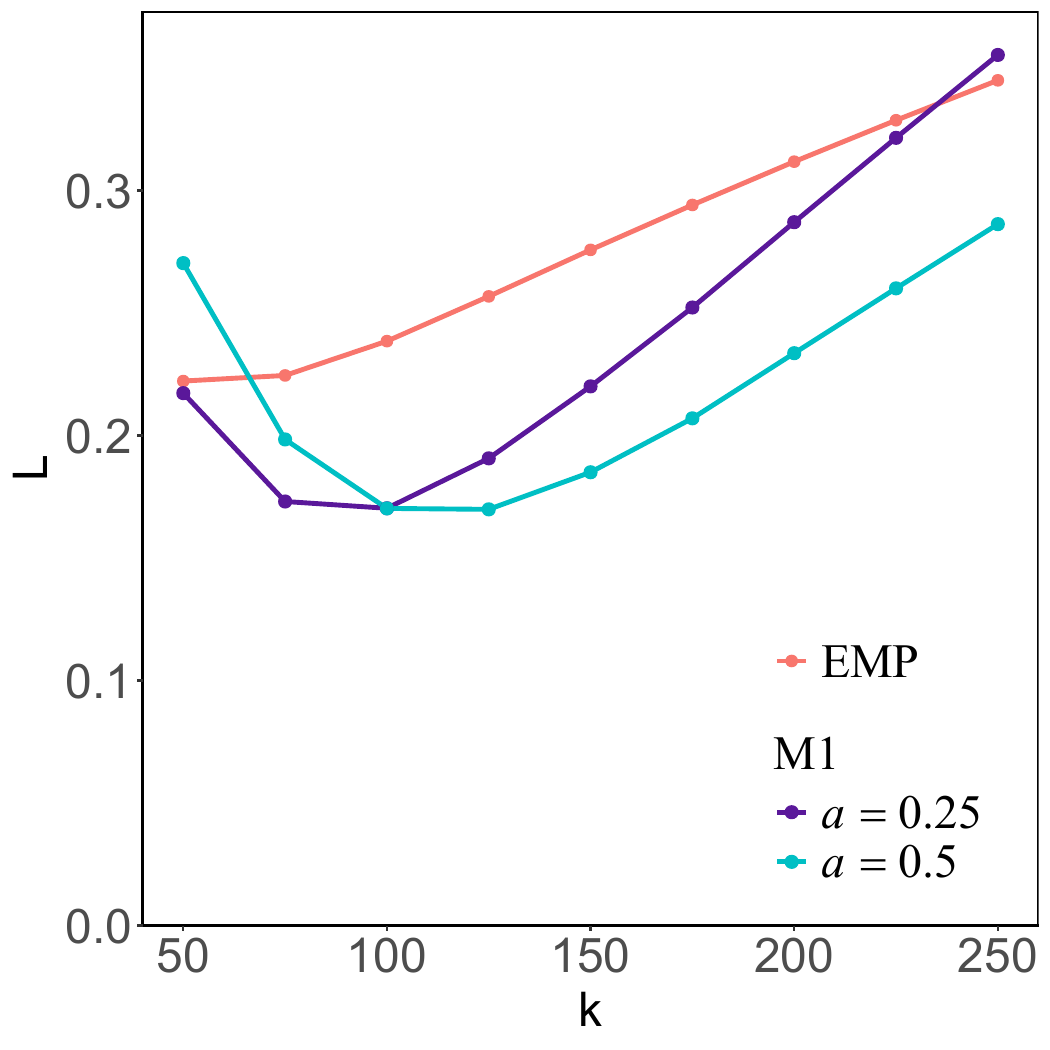}
\includegraphics[width=0.45\textwidth]{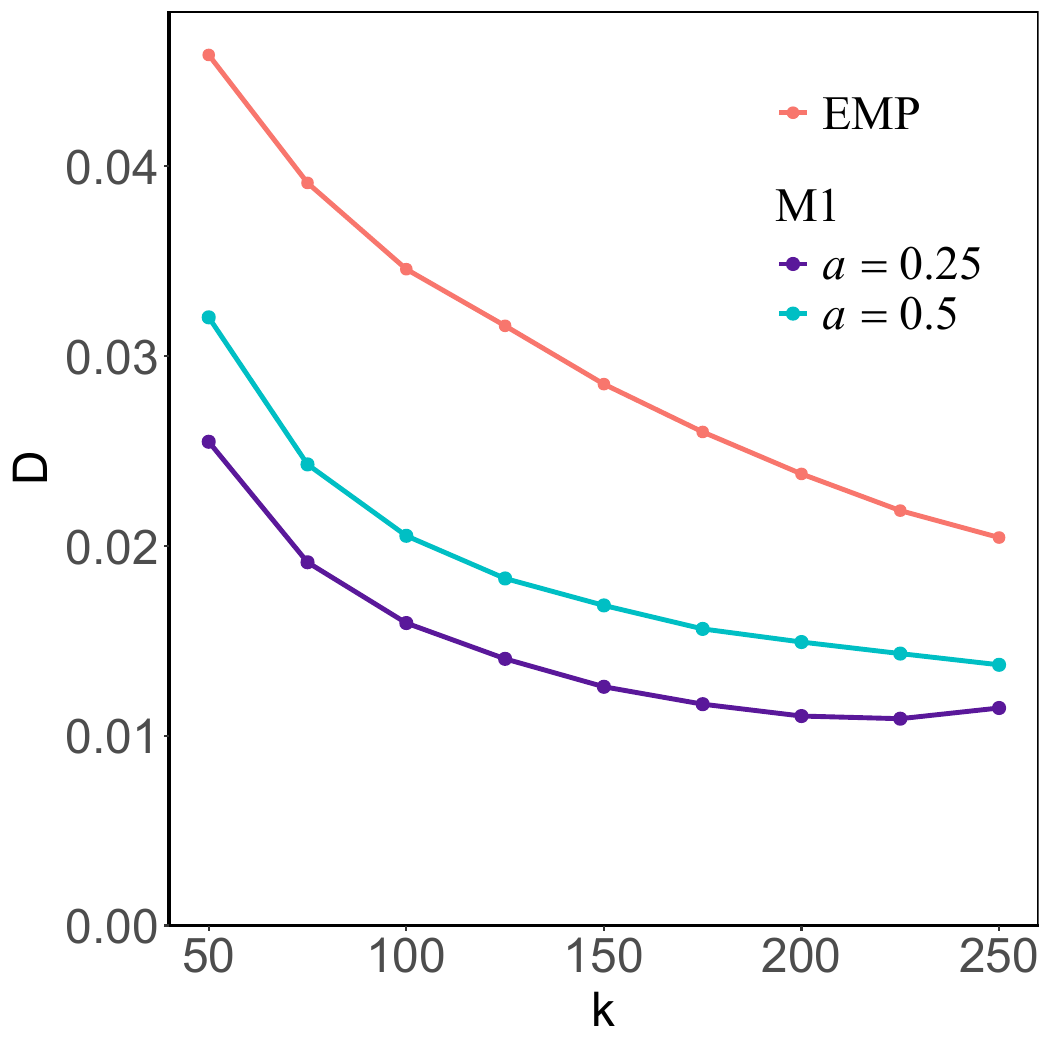}
\caption{The mean $L$ (left) and $D$ value (right) based on first-order moment estimator $\hat{\gamma}_{n,ij}^{\mathrm{M},(1)}(k,-\log a)$ (M1) with $a=0.25,0.5$ and the empirical estimator $\hat{\gamma}_{n,ij}^{\EMP}$ (EMP) over 300 replications, with random samples of size $n = 1000$ drawn from the $10$-dimensional \HR{} distribution with variogram matrix $\bm{\Gamma}_1$.
}
\label{fig:MeanerrorM1}
\end{figure}

\begin{figure}
\centering
\includegraphics[width=0.45\textwidth]{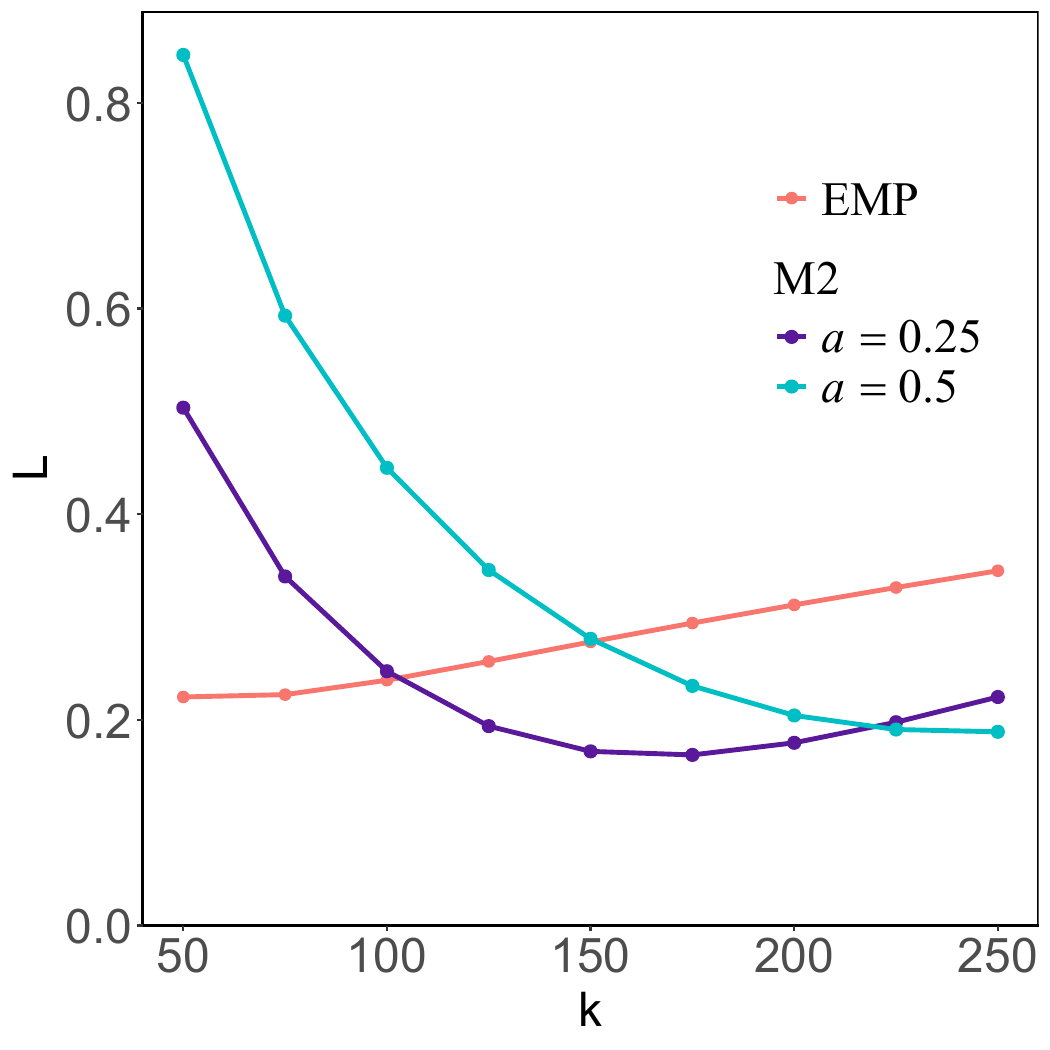}
\includegraphics[width=0.45\textwidth]{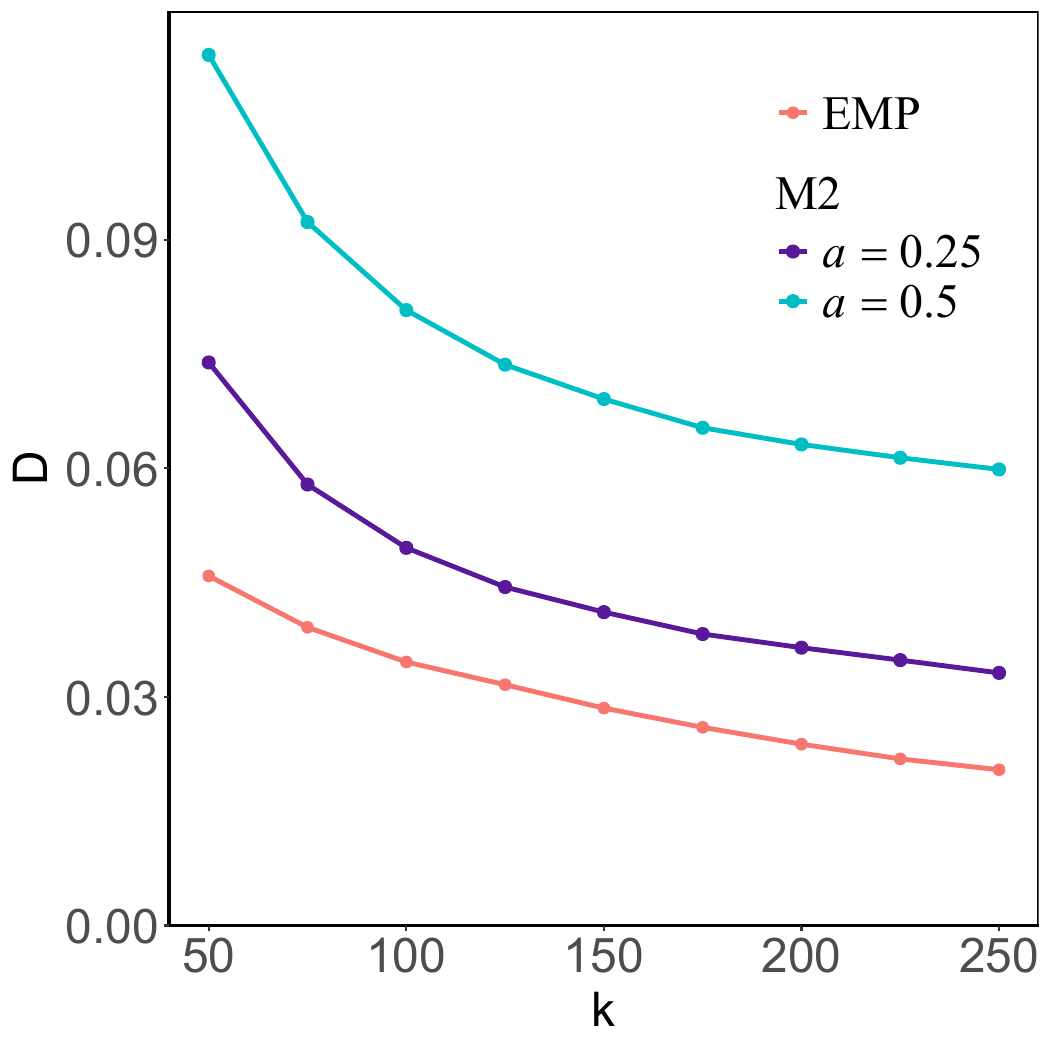}
\caption{The mean $L$ (left) and $D$ value (right) based on second-order moment estimator $\hat{\gamma}_{n,ij}^{\mathrm{M},(2)}(k,-\log a)$ (M2) with $a=0.25,0.5$ and the empirical variogram estimator $\hat{\gamma}_{n,ij}^{\EMP}$ (EMP) over 300 replications, with random samples of size $n = 1000$ drawn from the $10$-dimensional \HR{} distribution with variogram matrix $\bm{\Gamma}_1$.}
\label{fig:MeanerrorM2}
\end{figure}

\begin{figure}
\centering
\includegraphics[width=0.45\textwidth]{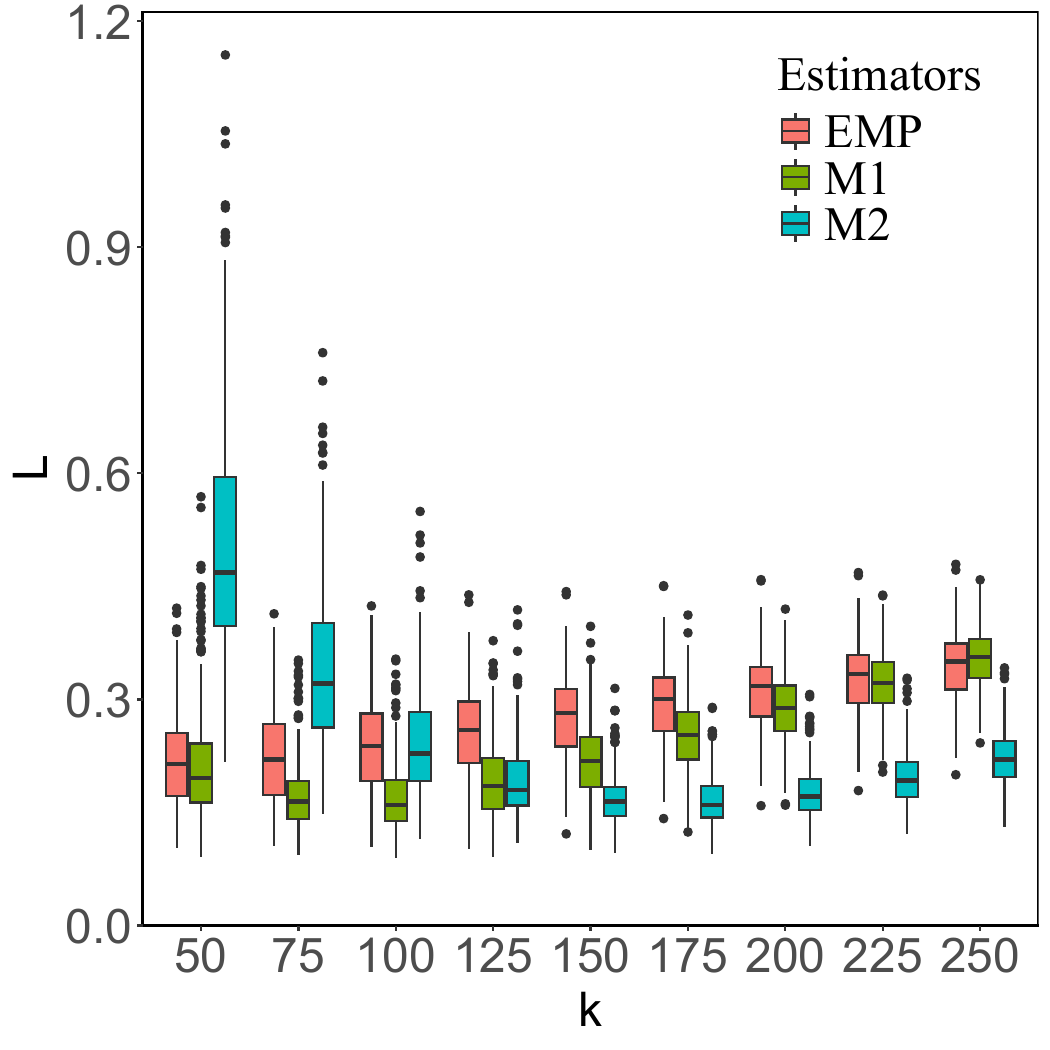}
\includegraphics[width=0.45\textwidth]{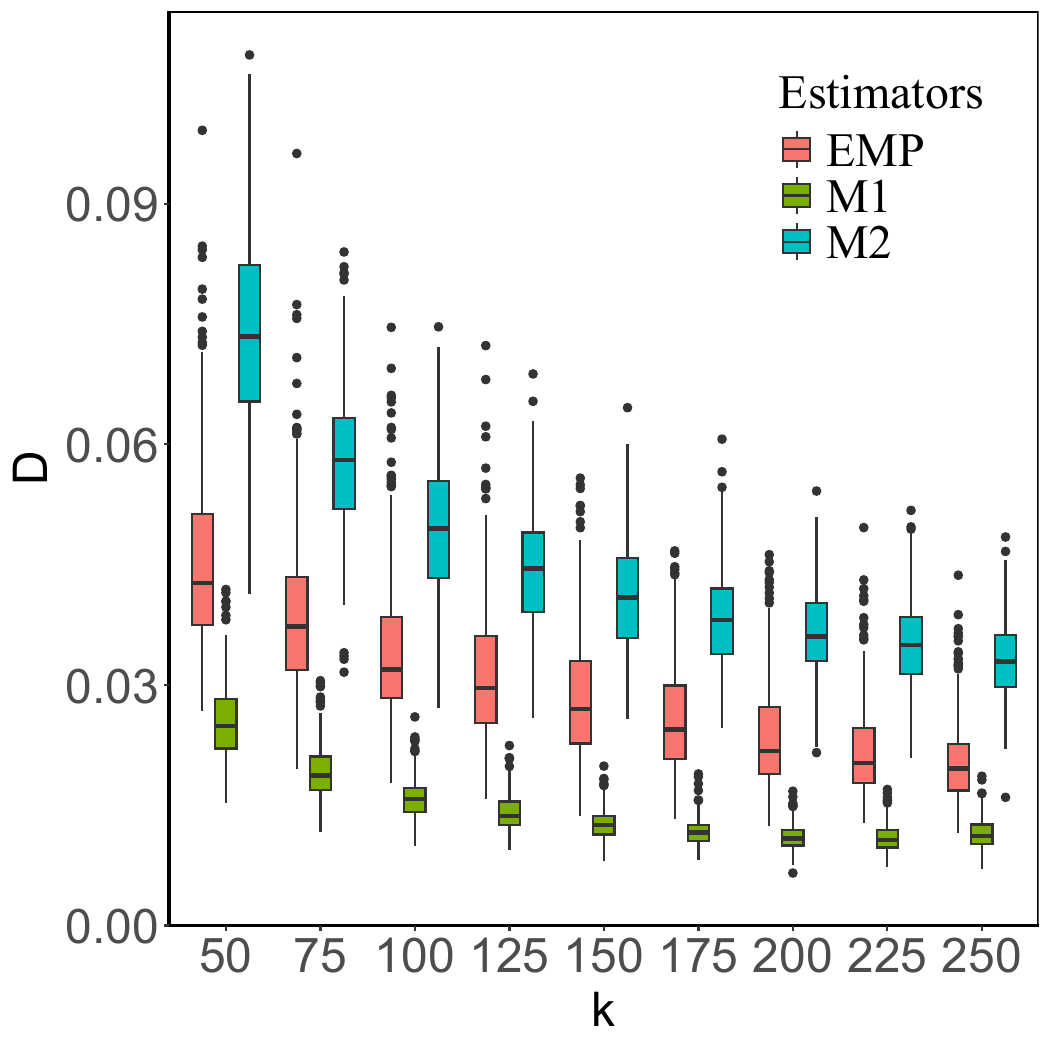}
\caption{The $L$ value (left) and distance $D$ of all bivariate tail dependence coefficients (right) based on the empirical variogram estimator $\hat{\gamma}_{n,ij}^{\EMP}$ (EMP), first-order moment estimator $\hat{\gamma}_{n,ij}^{\mathrm{M},(1)}(k,-\log a)$ (M1) and second-order moment estimator $\hat{\gamma}_{n,ij}^{\mathrm{M},(2)}(k,-\log a)$ (M2) with $a=0.25$ in 300 replications. The random samples with size $n=1000$ are drawn from the $10-$dimensional \HR{} distribution with variogram matrix $\bm{\Gamma}_1$.}
\label{fig:Cmp10D}
\end{figure}

\begin{figure}
\centering
\includegraphics[width=0.95\textwidth]{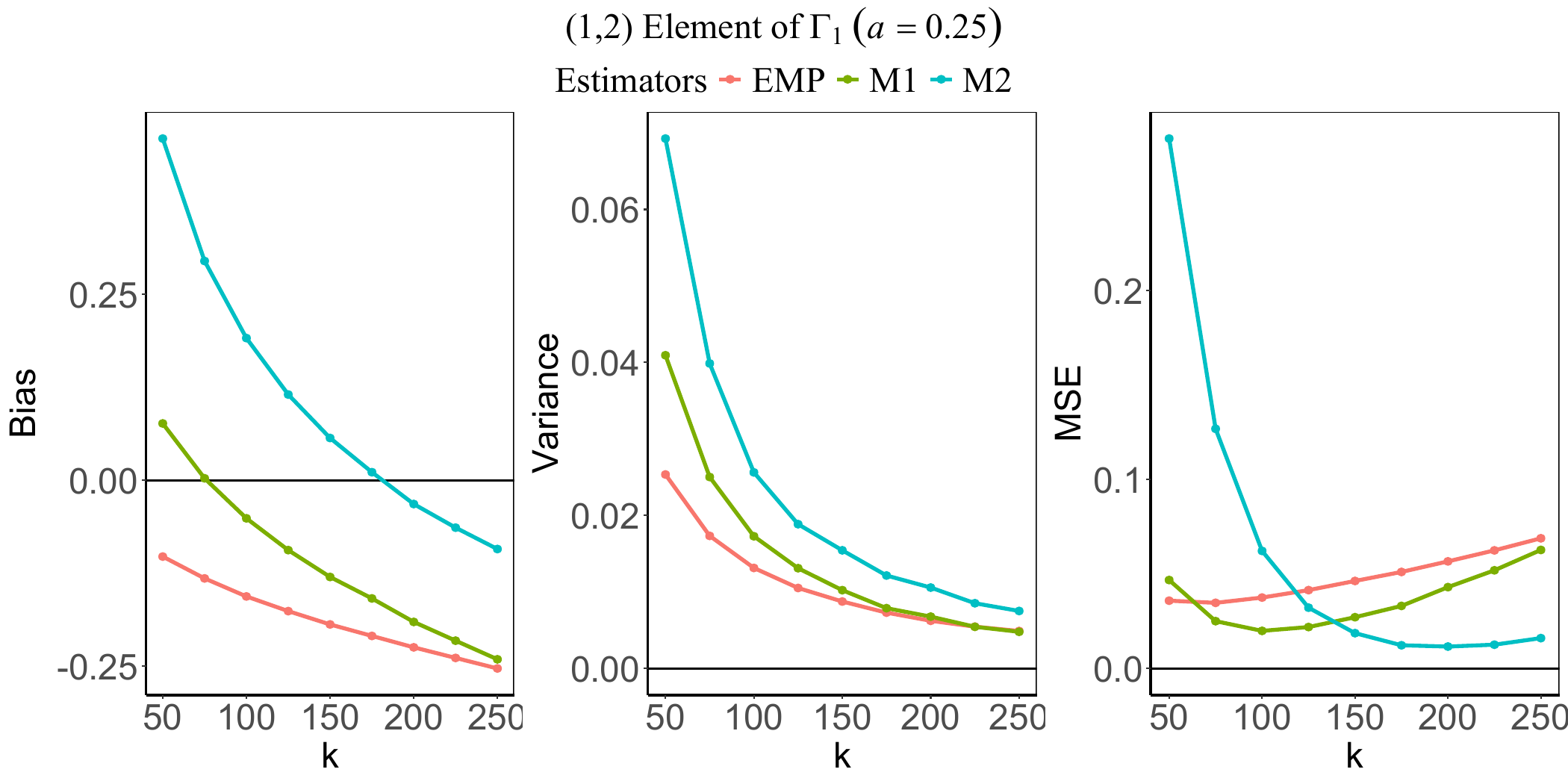}
\caption{The mean estimation bias, variance and MSE for $\gamma_{12}=0.89$ of $\bm \Gamma_1$, based on the empirical variogram estimator $\hat{\gamma}_{n,ij}^{\EMP}$ (EMP), first-order moment estimator $\hat{\gamma}_{n,ij}^{\mathrm{M},(1)}(k,-\log a)$ (M1) and second-order moment estimator $\hat{\gamma}_{n,ij}^{\mathrm{M},(2)}(k,-\log a)$ (M2) with $a=0.25$ in 300 replications. The random samples with size $n=1000$ are drawn from the $10-$dimensional \HR{} distribution with variogram matrix $\bm{\Gamma}_1$.}
\label{fig:combined_Gamma1}
\end{figure}

\begin{figure}
\centering
\includegraphics[width=0.45\textwidth]{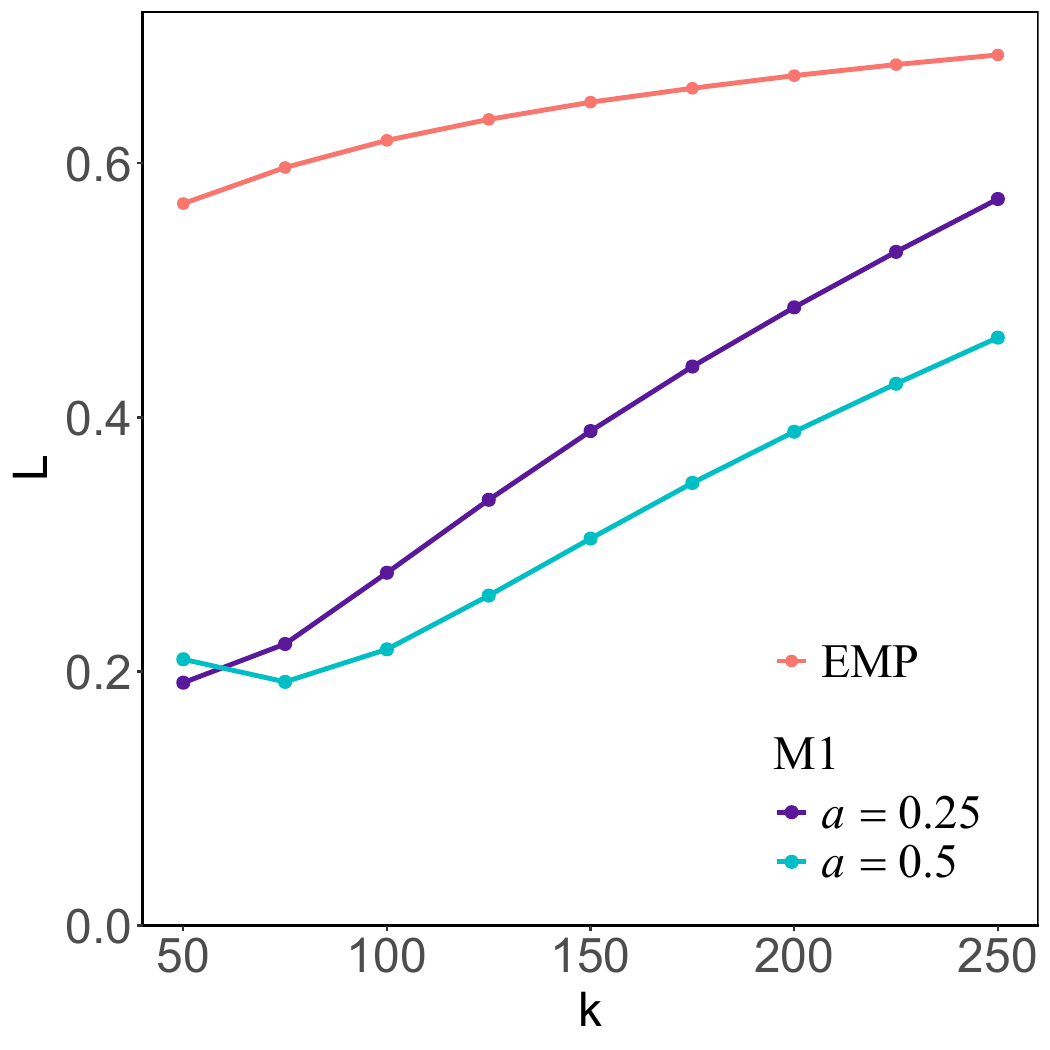}
\includegraphics[width=0.45\textwidth]{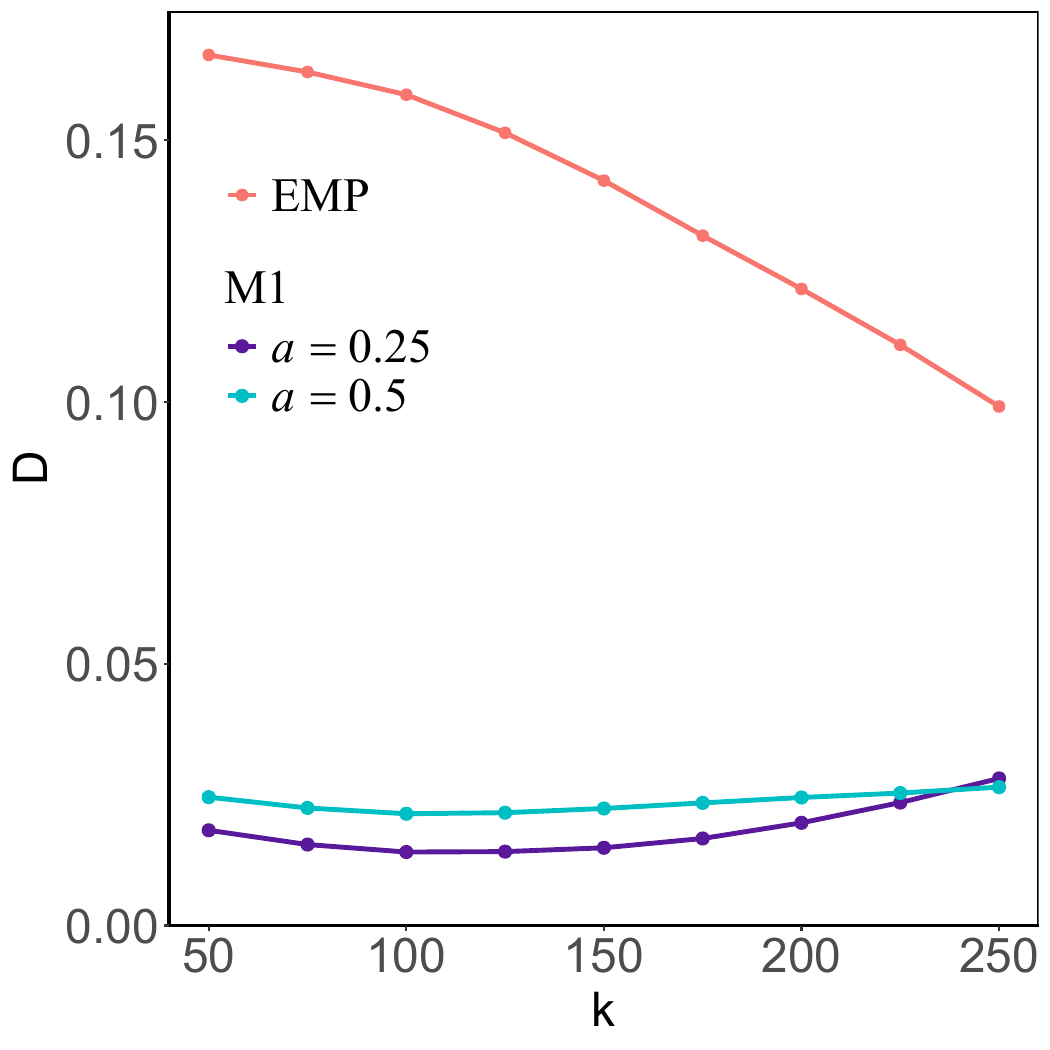}
\caption{The mean $L$ (left) and  $D$ value (right) based on the first-order moment estimator $\hat{\gamma}_{n,ij}^{\mathrm{M},(1)}(k,-\log a)$ (M1) and the empirical variogram estimator $\hat{\gamma}_{n,ij}^{\EMP}$ (EMP) with $a=0.25,0.5$ over 300 replications, with random samples of size $n = 1000$ drawn from the $5$-dimensional \HR{} distribution with variogram matrix $\bm{\Gamma}_2$.
}
\label{fig:MeanerrorM15D}
\end{figure}

\begin{figure}
\centering
\includegraphics[width=0.45\textwidth]{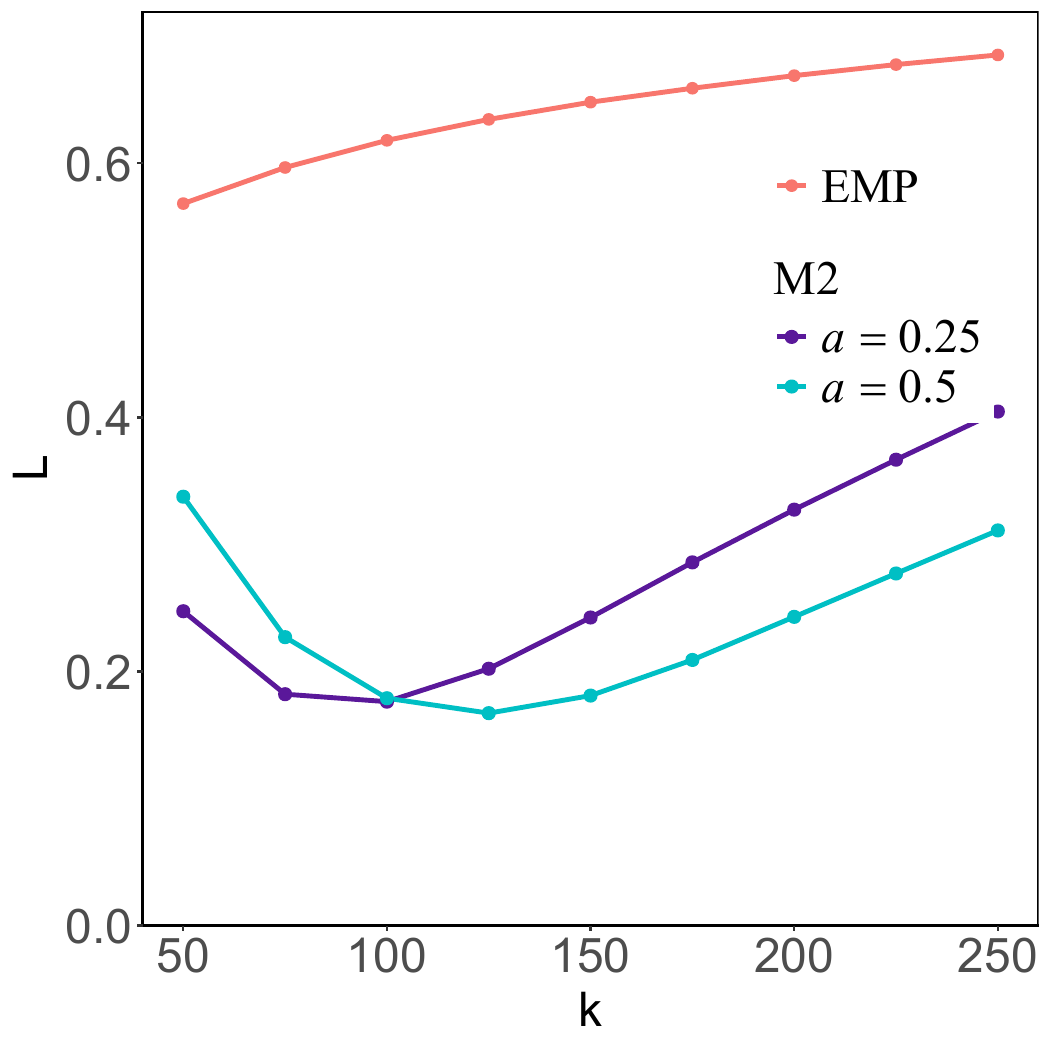}
\includegraphics[width=0.45\textwidth]{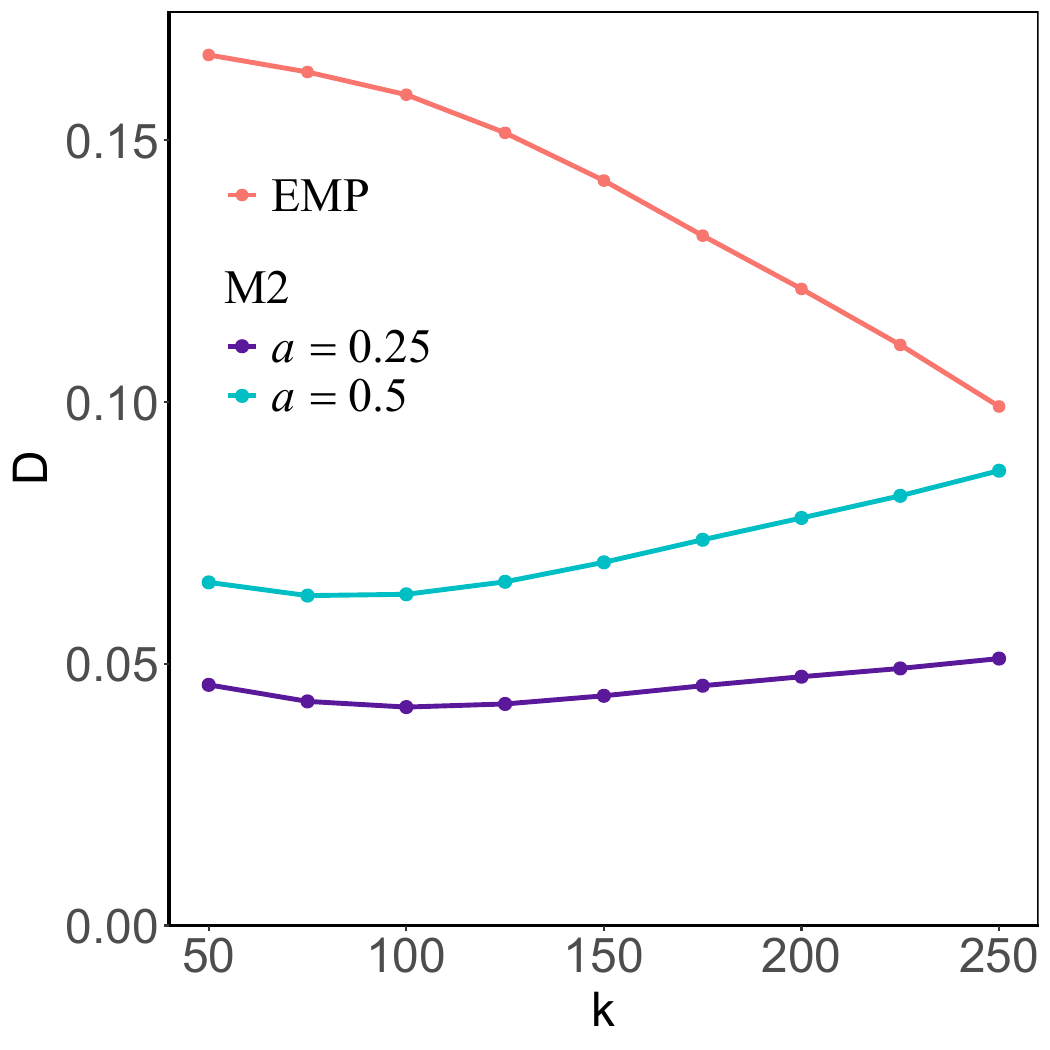}
\caption{The mean $L$ (left) and $D$ value (right) based on the second-order moment estimator $\hat{\gamma}_{n,ij}^{\mathrm{M},(2)}(k,-\log a)$ (M2) with $a=0.25, 0.5$ and the empirical estimator $\hat{\gamma}_{n,ij}^{\EMP}$ (EMP) over 300 replications, with random samples of size $n = 1000$ drawn from the $5$-dimensional \HR{} distribution with variogram matrix $\bm{\Gamma}_2$.}
\label{fig:MeanerrorM25D}
\end{figure}

\begin{figure}
\centering
\includegraphics[width=0.45\textwidth]{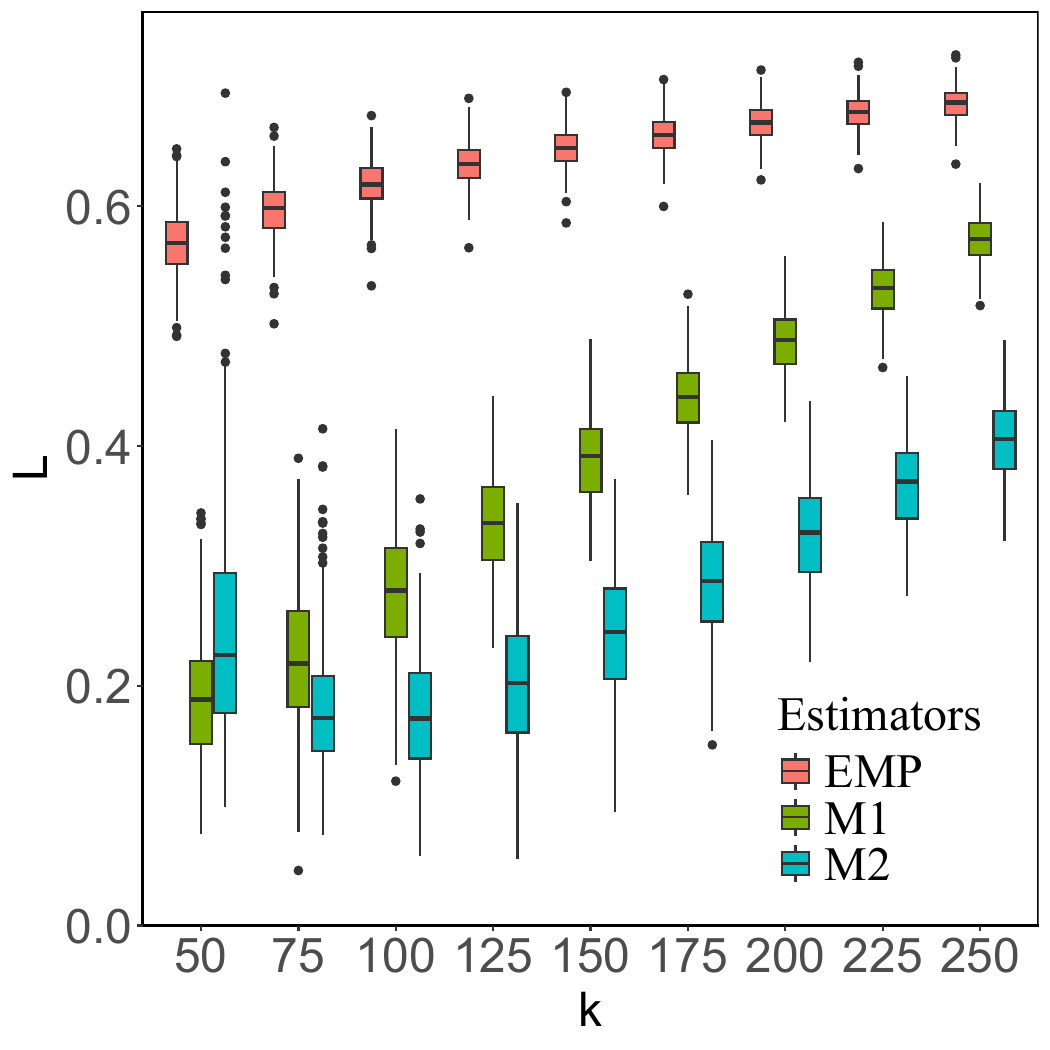}
\includegraphics[width=0.45\textwidth]{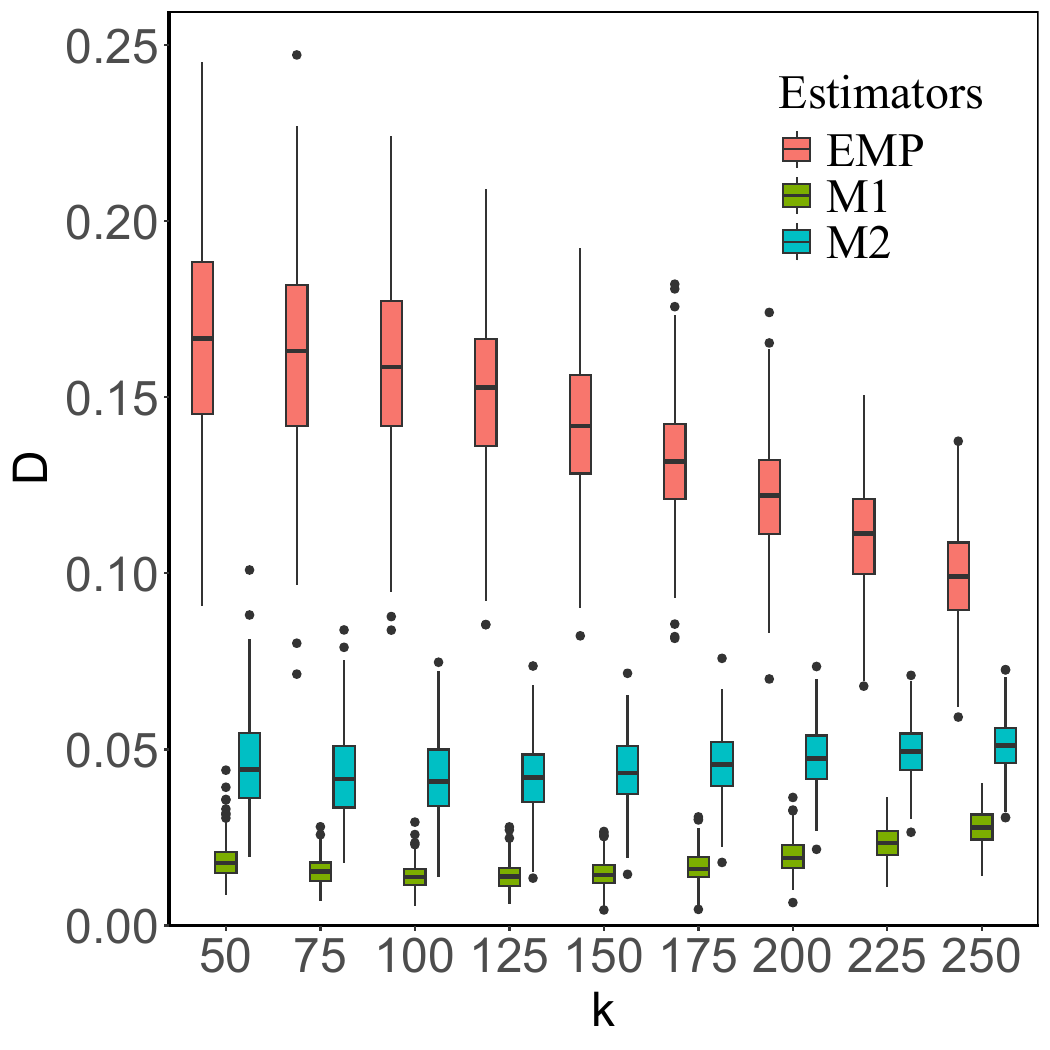}
\caption{The $L$ value (left) and distance $D$ of all bivariate tail dependence coefficients (right) based on the empirical variogram estimator $\hat{\gamma}_{n,ij}^{\EMP}$ (EMP), first-order moment estimator $\hat{\gamma}_{n,ij}^{\mathrm{M},(1)}(k,-\log a)$ (M1) and second-order moment estimator $\hat{\gamma}_{n,ij}^{\mathrm{M},(2)}(k,-\log a)$ (M2) with $a=0.25$ in 300 replications. The random samples with size $n = 1000$ are drawn from the $5$-dimensional \HR{} distribution with variogram matrix $\bm{\Gamma}_2$.}
\label{fig:Cmp5D}
\end{figure}

\begin{figure}
\centering
\includegraphics[width=0.95\textwidth]{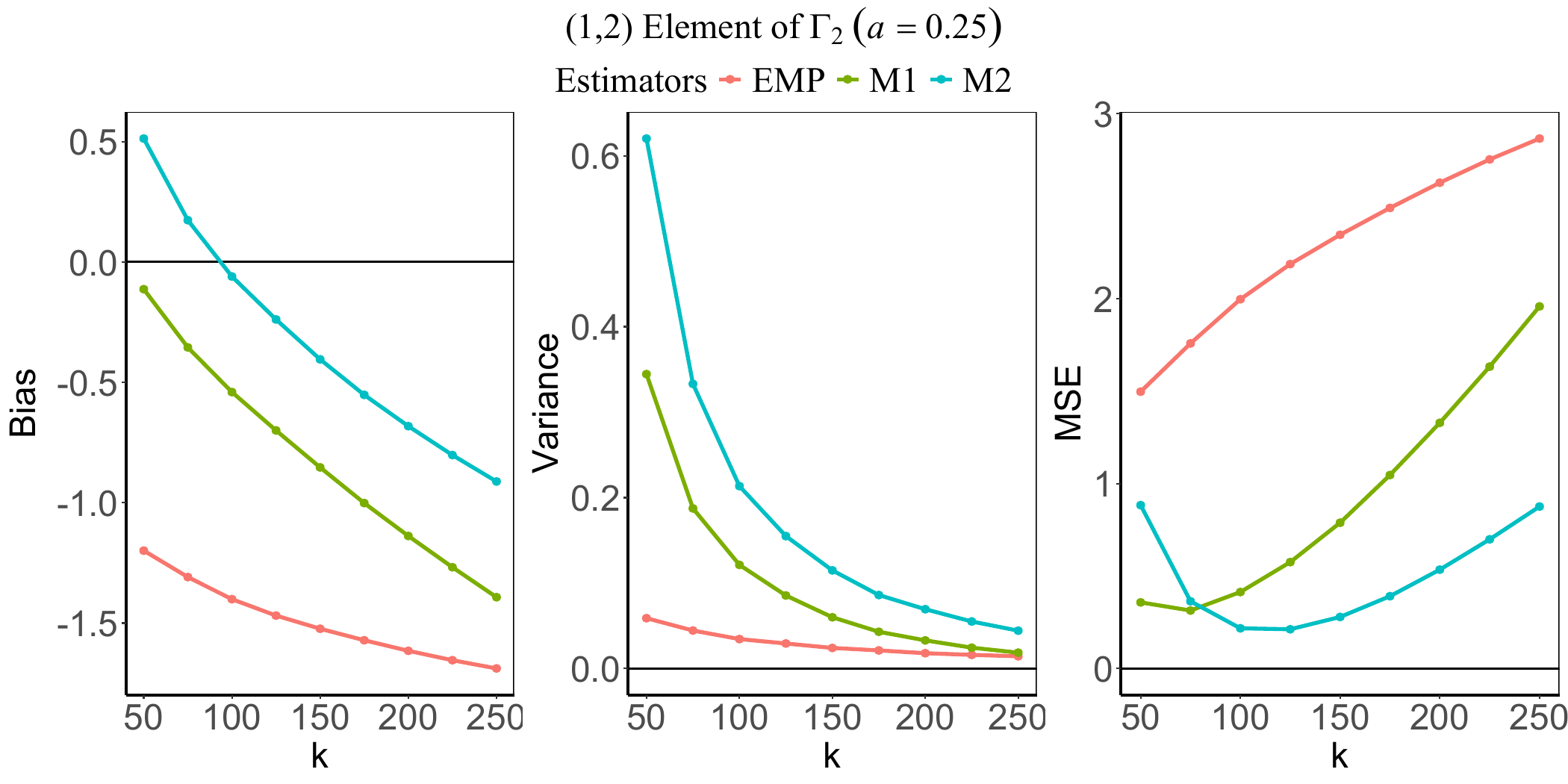}
\caption{The mean bias, variance and MSE for $\gamma_{12}=3$, based on the empirical variogram estimator $\hat{\gamma}_{n,ij}^{\EMP}$ (EMP), first-order moment estimator $\hat{\gamma}_{n,ij}^{\mathrm{M},(1)}(k,-\log a)$ (M1) and second-order moment estimator $\hat{\gamma}_{n,ij}^{\mathrm{M},(2)}(k,-\log a)$ (M2) with $a=0.25$ in 300 replications. The random samples with size $n=1000$ are drawn from the $5-$dimensional \HR{} distribution with variogram matrix $\bm{\Gamma}_2$.}
\label{fig:combined_Gamma2}
\end{figure}

\section{Application}
\label{sec:application}

\subsection{Danube discharges data}

To illustrate the practical performance of the proposed method, we fit a \HR{} model to the Danube discharge dataset. The variogram matrix $\bm{\Gamma}$ is estimated using the proposed moment estimators, and the resulting estimates are compared with those obtained from the empirical variogram estimator in~\citet{EV20}.
This dataset includes the average daily discharges recorded at 31 gauging stations in the upper Danube basin covering parts of Germany, Austria and Switzerland, available in the supplementary material of~\citet{ADE2015}. The series at individual stations has lengths from 54 to 113 years, with 51 years of data
for all stations from 1960 to 2010. We follow~\citet{ADE2015} and use the pre-processed data containing $n=428$ observations after a declustering of the time series to perform the estimation, considering the declustered samples as independent observations from a $31$-variate random vector $\bm{X}$ in the domain of attraction of a \HR{} distribution with variogram matrix $\bm{\Gamma}$.

To investigate how the proportion of samples used for estimation affects the results, we set $k/n\in \{0.05,0.10,0.15,0.20,0.25\}$. The absolute distance $D$ between the model-based and empirical tail dependence coefficients in~\eqref{eq:D} as a function of $k/n$ is plotted in the left panel of \cref{fig:Errvsp}, where $a=\exp(-c)$ is fixed to be $0.25$.
It can be seen from the plot that, compared to the empirical variogram estimator, the absolute distance $D$ for the first moment estimators is less sensitive to the choice of $k$ and has consistently smaller $D$ values than that of the empirical variogram estimator $\hat{\gamma}_{n,ij}^{\EMP}$.

To further assess the performance of the two estimation methods, we take $k/n=0.10$, and present the scatter plot of $(\hat{\lambda}_{ij},\hat{\lambda}_{ij}^{\rm EST})$ for $i,j\in V$ and $i\neq j$, where $\lambda_{ij}^{\rm EST}$ is the model-based tail dependence coefficient with the parameter matrix estimated by the moment estimators $\hat{\gamma}_{n,ij}^{\mathrm{M},(1)}(k,-\log a)$ and $\hat{\gamma}_{n,ij}^{\mathrm{M},(2)}(k,-\log a)$ with $a=0.25$, as well as the empirical variogram estimator $\hat{\gamma}_{n,ij}^{\EMP}$. \Cref{fig:Errvsp} shows that
the models based on the proposed estimators $\hat{\gamma}_{n,ij}^{\mathrm{M},(1)}$ provide a better fit to the empirical tail dependence coefficients than those of the other two estimators.

\begin{figure}
\centering
\includegraphics[width=0.45\textwidth]{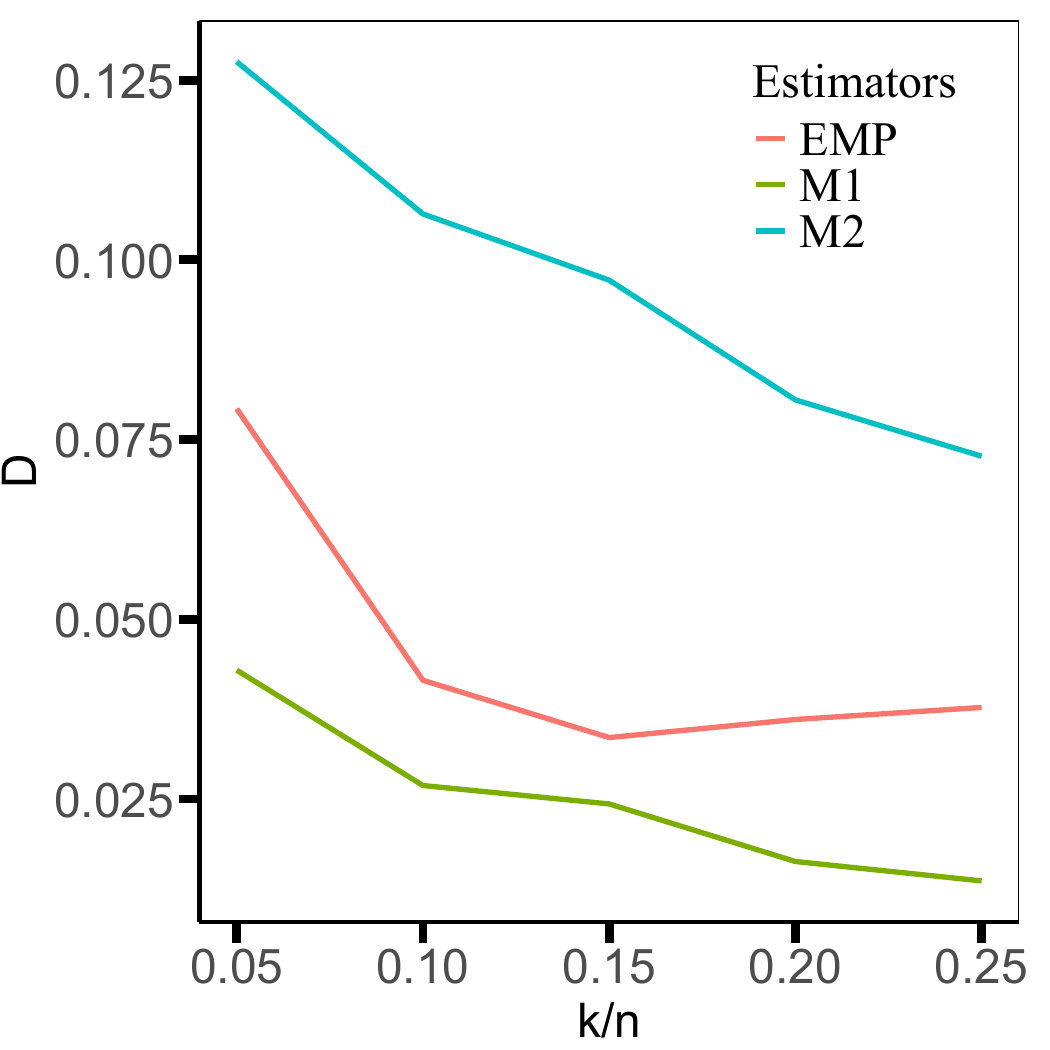}
\includegraphics[width=0.45\textwidth]{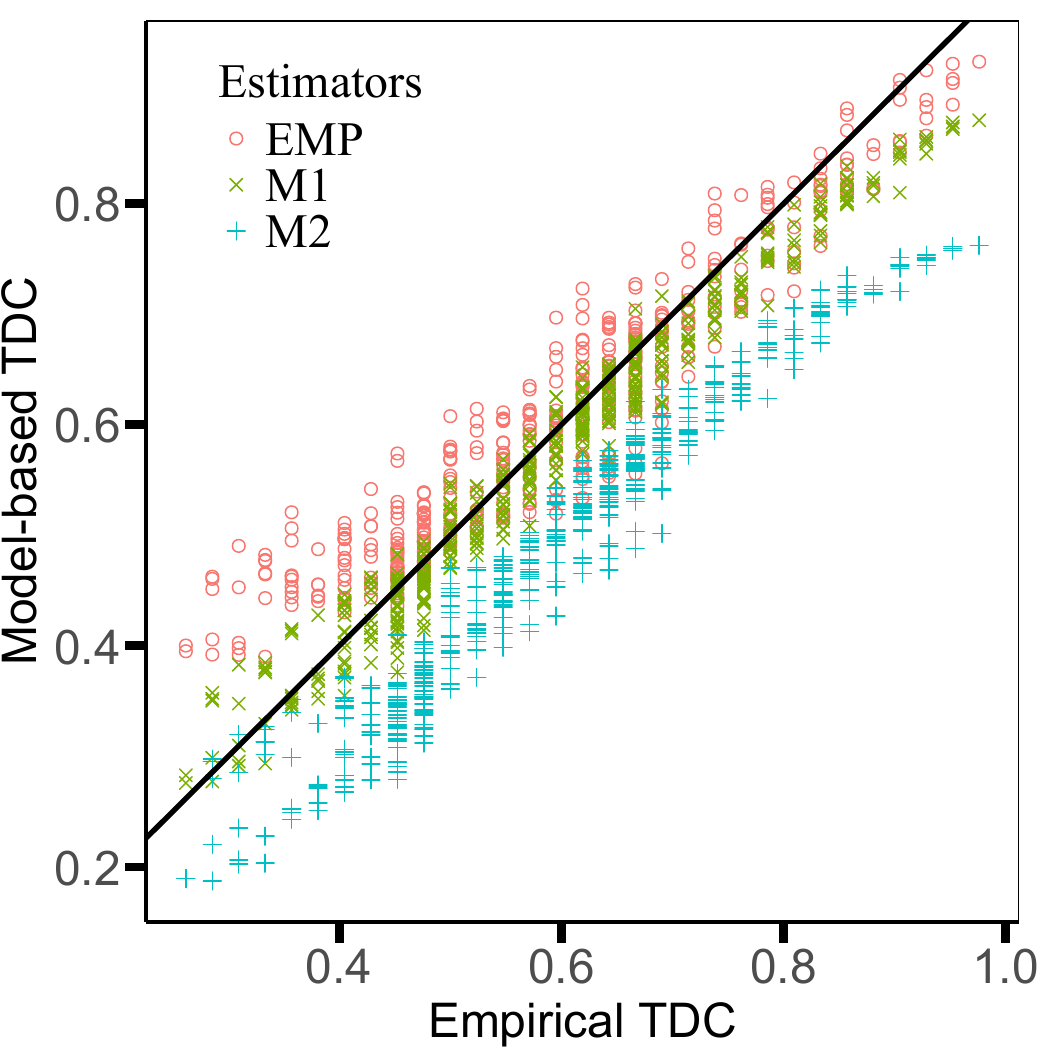}
\caption{
The $D$ values (left) and scatter plot (right) of empirical and model-based tail dependence coefficients (TDC) for the Danube discharges data, computed using the empirical variogram estimator $\hat{\gamma}_{n,ij}^{\mathrm{EMP}}$ (EMP),  the moment estimator $\hat{\gamma}_{n,ij}^{\mathrm{M},(1)}$ (M1) and $\hat{\gamma}_{n,ij}^{\mathrm{M},(2)}$ (M2), with clipping level $a=0.25$, ratio $k/n\in\{0.05,0.10,0.15,0.20,0.25\}$ for the line plot and $k/n=0.10$ for the scatter plot.}
\label{fig:Errvsp}
\end{figure}

\subsection{US flight delay data}

In this subsection, we apply the proposed estimators to the U.S. flight delay dataset, which is a widely used benchmark in the analysis of multivariate extremes (see, e.g., \citealp{HES2024,KLS2025}). The dataset contains records of domestic flights in the United States operated by major carriers (airlines with at least 1\% market share) and involving airports that account for at least 1\% of domestic enplanements. \citet{HES2024} preprocessed the dataset by restricting attention to airports located in the contiguous United States with at least 1000 flights per year. For each airport, daily accumulated positive flight delays (in minutes) are computed by summing arrival and departure delays. This results in a dataset consisting of $5601$ observations of daily accumulated flight delays over a $16-$year period across $170$ airports. Based on this dataset, they further apply a $K$-medoids clustering approach using a tail dependence coefficient distance matrix at probability level $p=0.85$, yielding six clusters. The original data are publicly available from the U.S. Bureau of Transportation Statistics. Preprocessed versions of the dataset can be obtained from the GitHub repository of Manuel Hentschel or through the \texttt{R} package \texttt{graphicalExtremes} \citep{HES2024}.

Here, we focus on the Texas cluster for model comparison purposes. This cluster consists of $d=29$ airports and contains $n=3603$ observation days over the period 2005--2020. We model the resulting data using the \HR{} model to capture tail dependence among extreme flight delays. The variogram matrix is estimated by the proposed moment estimators and the empirical variogram estimator.

The $D$ values based on the clipped moments estimator and the empirical variogram estimator, as a function of $k/n\in\{0.05,0.10,0.15,0.20,0.25\}$, are presented in the left panel of \cref{fig:Errvsp_flight}. The right panel shows scatter plots comparing the model-based tail dependence coefficients obtained from the three estimators with the empirical tail dependence coefficients, where $k/n=0.1$ and $a=0.25$. We observe that, in this setting, the clipped moment estimators exhibit greater stability with respect to the choice of $k$. In particular, the improvement of the first-order clipped moment estimator $\hat{\gamma}_{n,ij}^{\mathrm{M},(1)}(k,-\log a)$ is more pronounced. Compared with the Danube dataset, the dependence structure in this dataset appears to be weaker. This leads to a more evident improvement of the first-order clipped moment estimator.

\begin{figure}
\centering
\includegraphics[width=0.45\textwidth]{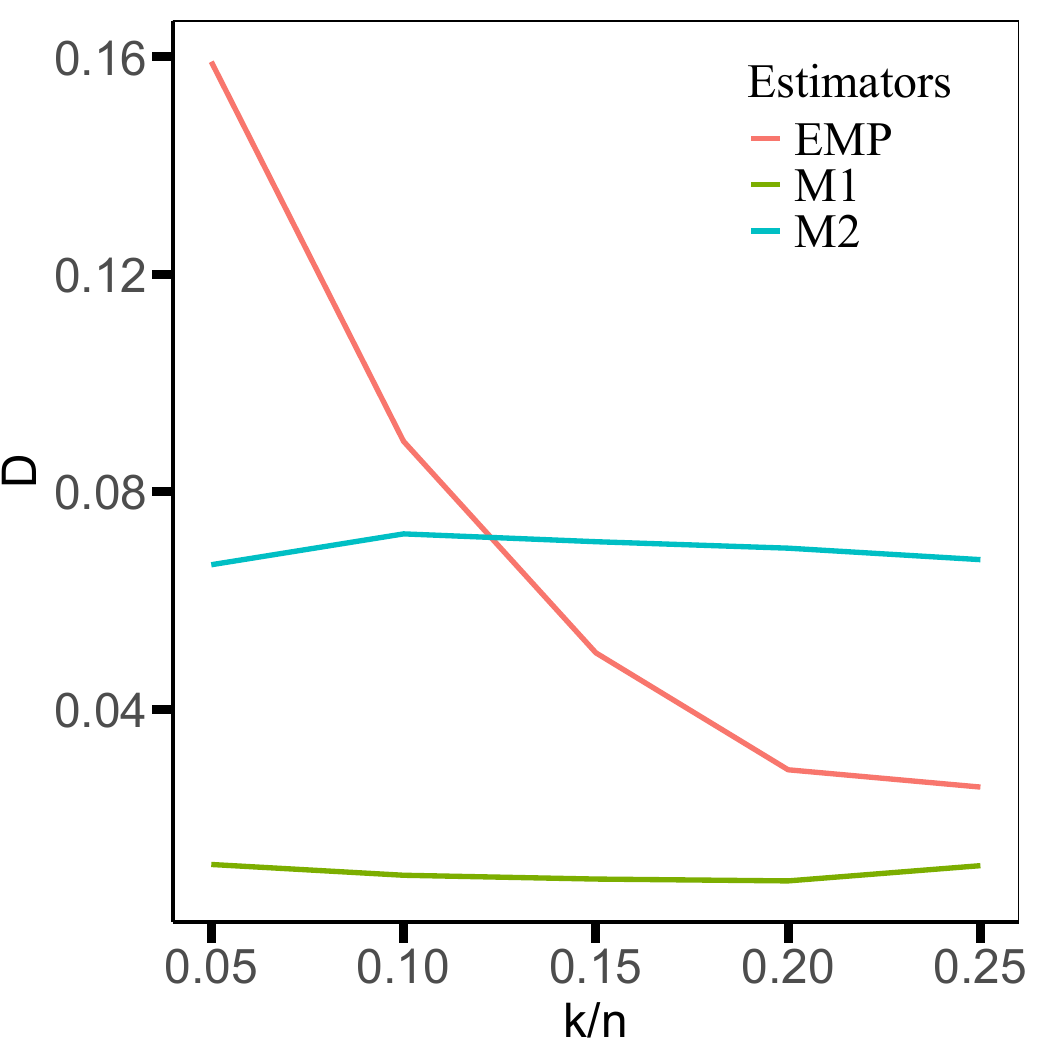}
\includegraphics[width=0.45\textwidth]{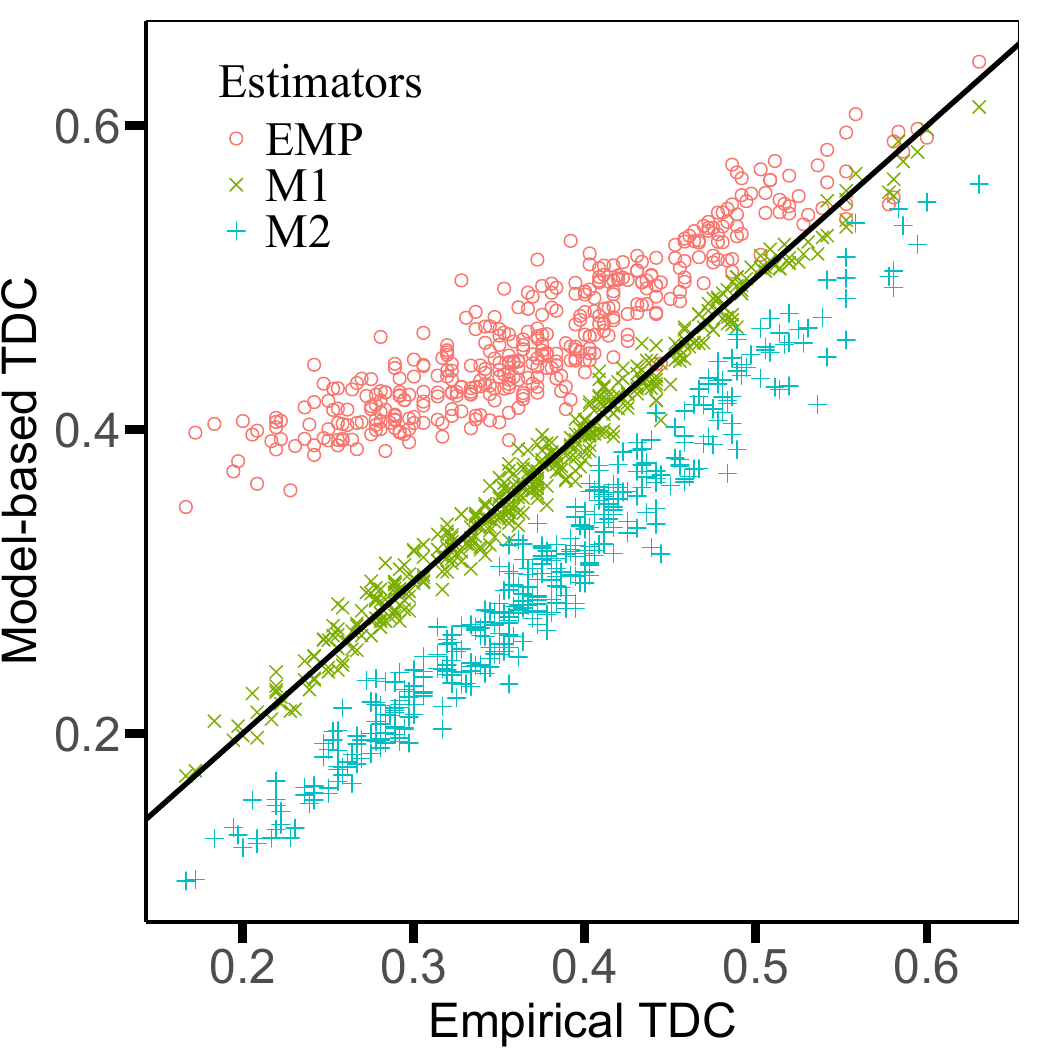}
\caption{
The $D$ values (left) and scatter plot (right) of empirical and model-based tail dependence coefficients (TDC) for the flight delay data, computed using the empirical variogram estimator $\hat{\gamma}_{n,ij}^{\mathrm{EMP}}$ (EMP),  the moment estimator $\hat{\gamma}_{n,ij}^{\mathrm{M},(1)}$ (M1) and $\hat{\gamma}_{n,ij}^{\mathrm{M},(2)}$ (M2), with clipping level $a=0.25$, ratio $k/n\in\{0.05,0.10,0.15,0.20,0.25\}$ for the line plot and $k/n=0.10$ for the scatter plot.}
\label{fig:Errvsp_flight}
\end{figure}

\section{Concluding remarks}
\label{sec:conclusion}

The empirical variogram estimator of~\citet{EV20} is the natural estimator of the variogram matrix of a \HR{} MGPD and is widely used in practice. It relies, however, on the limiting MGPD being an accurate approximation to the joint tail of the data. When the tail dependence between some of the components is weak, this approximation deteriorates: an observation that is extreme in some of the components is then likely to be non-extreme in the weakly dependent ones, and the resulting bias can be substantial. Motivated by this, we have proposed first- and second-order moment estimators of the elements of the variogram matrix, constructed from a lower-tail-clipped version of the standardized variables, and we have established their consistency and asymptotic normality.

The simulation study and the two case studies point in the same direction. The clipped moment estimators trade variance for bias: the empirical variogram estimator attains the smaller variance throughout, but the larger bias, and the moment estimators are the more accurate ones precisely in the regime that motivated them, that is, when the dependence is weak. Among the estimators considered, the first-order moment estimator with clipping level $a=0.25$ comes out best overall, having the lowest error in almost all cases in terms of both $L$ and $D$. Its advantage is most pronounced for small $k$, and both moment estimators are less sensitive to the choice of $k$ than the empirical variogram estimator. The applications confirm the picture: for the US flight delay data, whose dependence structure is weaker than that of the Danube discharge data, the improvement brought by the first-order clipped moment estimator is the more evident of the two.

The choice of the moment order $\ell$ and of the clipping level $c$ is left open. It is tempting to select the two by minimizing the asymptotic variance of \cref{thm:AsyNormalityMoment}, for which \cref{sec:AsyVar} provides an explicit expression. This route, however, is not viable: the asymptotic variance is minimal for $\ell=1$ and in the limit $c \to \infty$, that is, $a \to 0$, which amounts to no clipping at all. The purpose of clipping is to reduce the bias, so any meaningful choice of $\ell$ and $c$ must weigh bias against variance. We have no theoretical expression for the bias of the moment estimators, nor a way of estimating it, and the trade-off is therefore out of reach. This is why we have fixed $a=0.25$ throughout, guided by the simulation study rather than by theory. A tractable handle on the bias appears to us to be the key to any principled, data-driven choice of $\ell$ and $c$.

Finally, one may ask whether removing the clipping recovers the empirical variogram estimator. It does not. The moment functions $e^{(\ell)}(\gamma,c)$ in \eqref{eq:etildeFirst} and \eqref{eq:etildeSecond} diverge as $c \to \infty$, on account of the shift by $c$, so that the limit cannot be taken directly in \eqref{eq:M1}. The clipped variable itself does converge, $Y_i^{(j)} \vee (-c) \to Y_i^{(j)}$, and its first two moments are
\[
    \E \left( Y_i^{(j)} \right) = 1 - \frac{\gamma_{ij}}{2},
    \qquad
    \E \left\{ \left( Y_i^{(j)} \right)^{2} \right\} = \frac{1}{4} \gamma_{ij}^{2} + 2 .
\]
Equating these to their empirical counterparts yields moment estimators of $\gamma_{ij}$ based on the unclipped variables. These are not the empirical variogram estimator: the latter is built on the variance of the differences $Y_i^{(m)} - Y_j^{(m)}$, whereas the former rest on the moments of $Y_i^{(j)}$ alone. The clipping level thus does not interpolate between the moment estimators proposed here and the empirical variogram estimator; the two constructions remain distinct.

\section*{Statements and Declarations}

\paragraph{Competing interests.}
The authors declare no competing interests.

\paragraph{Author contributions.}
Both authors contributed equally to the ideas and to the theory. The simulation study and the case study are largely the work of Shuang Hu.

\paragraph{Data availability.}
Both datasets analyzed in this paper are publicly available. The Danube discharge data are available in the supplementary material of~\citet{ADE2015}. The US flight delay data are publicly available from the U.S. Bureau of Transportation Statistics; the preprocessed version used here can be obtained through the \texttt{R} package \texttt{graphicalExtremes}~\citep{HES2024}.

The code used for the simulation studies and data analysis is available at
\url{https://github.com/HUSHHuShuang/HR_variogram_estimation_by_clipped_moments}.

\bibliography{reference}

\newpage

\appendix
\section{Lemmas and proofs of theoretical results}
\label{sec:Appendix}

In this section, we give the proofs of all the propositions, theorems and lemmas. We first introduce some notation that will be used in the following proofs.

Recall that for the random vector $\bm{X}=(X_i,i\in V)$ with continuous marginal distribution functions $F_i(x)$, $i\in V$, the random vector $\bm{U}=(U_i,i\in v)$ is defined by $U_i=1-F_i(X_i)$ for $V=\{1,\ldots,d\}$.
For $i\in V$ and $t=1,\ldots,n$, let
\[
\bm{U}_{t}=(U_{t1},\ldots,U_{td}), \   U_{ti}=1-F_i(X_{ti}).
\]
For non-empty $I\subset V$, denote
\begin{align*}
    T_{n,I}(\bm{x}_{I})
    &= \frac{1}{k} \sum_{t=1}^{n} \I\left\{U_{ti}\le k x_{i}/n, \  i\in I \right\}, \\
    R_{n,I}(\bm{x}_{I})
    &= \frac{n}{k} C_{I}\left(\frac{kx_{i}}{n}, \ i\in I\right).
\end{align*}
Define the tail process on $[0,\infty]^{|I|}\setminus \{(\infty,\ldots,\infty)\}$ as
\begin{align}
\label{eq:nu}
\nu_{n,I}(\bm{x}_I)=\sqrt{k} \left(T_{n,I}(\bm{x}_{I})-R_{n,I}(\bm{x}_{I})\right).
\end{align}

Denote the empirical distribution function of $U_{ti}$, $t=1,\ldots, n$ by
\[
\hat{F}_{U,i}(x)=\sum_{t=1}^n \I\{U_{ti}\le x\}/(n+1).
\]
For every $s\in \{1,\ldots,n\}$, we have
\begin{align*}
    \hat{F}_{U,i}(U_{si})
    &= \frac{1}{n+1}\sum_{t=1}^n \I\{1-F_i(X_{ti})\le 1-F_i(X_{si})\} \\
    &= 1-\frac{1}{n+1}\sum_{t=1}^n \I\{X_{ti}\le X_{si}\}
    =1-\hat{F}_{i}(X_{si})
\end{align*}
almost surely. Hence $\hat{U}_{ti}=\hat{F}_{U,i}(U_{ti})$.
Let
\begin{equation}
\label{eq:InverseDF}
\hat{F}_{U,i}^{\leftarrow}(x)=\inf \{y \in \mathbb{R} : \hat{F}_{U,i}(y) \ge x\}
\end{equation}
denote the generalized inverse function of $\hat{F}_{U,i}$. For $I\subset V$ and $\bm{x}_I\in [0,\infty]^{|I|}\setminus{(\infty,\ldots,\infty)}$, set
\begin{align}
\label{eq:EmpiricalTailCopula}
    \tilde{R}_{n,I}(\bm{x}_I)
    &=\frac{1}{k} \sum_{t=1}^{n} \I\left\{\hat{U}_{ti}\le k x_{i}/n,i\in I\right\}
    \end{align}
and
\begin{align}
\label{eq:UTDFEst}
    \hat{R}_{n,I}(\bm{x}_I)
    &=\frac{1}{k} \sum_{t=1}^{n} \I\left\{U_{ti}\le \hat{F}_{U,i}^{\leftarrow} (k x_{i}/n),i\in I\right\}.
\end{align}
For a subset $I \subset V$, suppose that $1/k \le x_i \le n/k$ for all $i \in I$.
Let
\begin{align}
\label{eq:Tailprocess}
\hat{\nu}_{n,I}(\bm{x}_I)=\sqrt{k} \left(\hat{R}_{n,I}(\bm{x}_I)-R_{I}(\bm{x}_I)\right)
\end{align}
with $\hat{R}_{n,I}(\bm{x}_I)$ and $R_{I}(\bm{x}_I)$ given by~\eqref{eq:UTDFEst} and~\eqref{eq:UTDF}.

\subsection{Lemmas}
We first present some lemmas that will be used in the proofs of the propositions and the theorems in this section.

\begin{lemma}[{\bf Existence of clipped moments for MGPD}]
\label{lemma:ExistTruVario}
Let $\bm{Y}=(Y_v,v\in V)$ be a standard MGP distributed random vector, and $c$ be a real number that $0\le c<\infty$. For any $i,m\in V$, we have $\E \left\{(Y_{i}^{(m)} + c)_{+}^{2}\right\}<\infty$.
\end{lemma}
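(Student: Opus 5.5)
We use the stochastic representation \eqref{eq:StochRepr}: for any $i,m\in V$, $Y_i^{(m)}\overset{d}{=}E+Z_i^{(m)}$, where $E$ is standard exponential and independent of $Z_i^{(m)}$, and $Z_i^{(m)}\in[-\infty,\infty)$. Since $c\ge 0$, the bound $(x+c)_+\le x_+ + c$ holds for every $x\in[-\infty,\infty)$, with the convention $(-\infty+c)_+=0$. Together with $(a+b)^2\le 2a^2+2b^2$, this gives
\[
\E\left\{(Y_i^{(m)}+c)_+^2\right\}\le 2\E\left\{(Y_i^{(m)})_+^2\right\}+2c^2 .
\]
It therefore suffices to show that $\E\{(Y_i^{(m)})_+^2\}<\infty$. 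The case $i=m$ is immediate, since then $Y_m^{(m)}=E$ and $\E E^2=2$.

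For $i\neq m$, we control the upper tail of $Y_i^{(m)}$ through the MGP structure rather than through $Z_i^{(m)}$, because $Z_i^{(m)}$ may have heavy upper tails for a general MGPD. The key point is that the $i$-th margin of a standard MGPD, conditionally on being positive, is standard exponential. This follows from \eqref{eq:MPD}: for $y>0$,
\[
\pr(Y_i>y)=\lim_{u\to\infty}\frac{\pr(\tilde X_i>u+y)}{\pr(\tilde{\bm X}\nleqslant u)}=\e^{-y}\lim_{u\to\infty}\frac{\e^{-u}}{\pr(\tilde{\bm X}\nleqslant u)}=\e^{-y}\,\pr(Y_i>0).
\]
The limit $\lim_{u\to\infty}\e^{-u}/\pr(\tilde{\bm X}\nleqslant u)$ exists and lies in $[1/d,1]$, because $\e^{-u}\le\pr(\tilde{\bm X}\nleqslant u)\le d\,\e^{-u}$.

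Next, for $y>0$ we have
\[
\pr(Y_i^{(m)}>y)=\frac{\pr(Y_i>y,\,Y_m>0)}{\pr(Y_m>0)}\le\frac{\pr(Y_i>y)}{\pr(Y_m>0)}=\frac{\pr(Y_i>0)}{\pr(Y_m>0)}\,\e^{-y},
\]
where $\pr(Y_m>0)\ge 1/d>0$ by the same bound. Hence
\[
\E\left\{(Y_i^{(m)})_+^2\right\}=\int_0^\infty 2y\,\pr(Y_i^{(m)}>y)\,\d y\le \frac{2\,\pr(Y_i>0)}{\pr(Y_m>0)}<\infty,
\]
which completes the proof.

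The main point needing care is the exponential tail $\pr(Y_i>y)=\e^{-y}\pr(Y_i>0)$. It can be justified either by the computation above, which interchanges the limit using exact exponential margins, or by the threshold-stability of MGPDs. This bound applies to any standard MGPD. For the \HR{} case, Lemma~\ref{lemma:monotonicity} gives the moments explicitly, but the general argument does not need it.
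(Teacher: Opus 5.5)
Your proof is correct and follows essentially the paper's route: both arguments bound the upper tail of $Y_i^{(m)}$ by an exponential using the standard exponential margins of $\tilde{\bm X}$, and then integrate $2y\,\pr(Y_i^{(m)}>y)$. The paper bounds the pre-limit conditional probability given $\tilde X_m>u$ by $\pr(\tilde X_i>u+y)/\pr(\tilde X_m>u)=\e^{-y}$ and lets $u\to\infty$, whereas you pass through the unconditional margin of $\bm Y$. Your constant $\pr(Y_i>0)/\pr(Y_m>0)$ in fact equals $1$, because both probabilities are the same limit $\lim_{u\to\infty}\e^{-u}/\pr(\tilde{\bm X}\nleqslant u)$; that limit exists by the assumed convergence \eqref{eq:MPD}, not by the sandwich bounds. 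So you recover exactly the paper's bound $\e^{-y}$.
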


\begin{proof}
Assume $\tilde{\bm{X}}=(\tilde{X}_i,i\in V)$ is a random vector with standard exponential margins and it belongs to the domain of attraction of $\bm Y$. Note that
\begin{align*}
\pr(\tilde{X}_i>y+\tilde{X}_m\mid \tilde{X}_m>u)
=\frac{\pr(\tilde{X}_i>y+\tilde{X}_m,\tilde{X}_m>u)}
{\pr(\tilde{X}_m>u)}
\leq \frac{\pr(\tilde{X}_i>y+u)}
{\pr(\tilde{X}_m>u)} \le \exp(-y)
\end{align*}
by the standard exponential margins. Hence from \eqref{eq:MPD} and by letting $u\to \infty$ on the left-hand side of the last inequality, we have
\[
\pr(Y_{i}^{(m)}>y)<\exp(-y),\  y> 0.
\]
Therefore,
\begin{align*}
    &\E \left[\left\{Y_{i}^{(m)} \vee (-c)\right\}^2\right] \\
    &\quad =\E \left[\left\{ Y_{i}^{(m)} \vee (-c)\right\}^2 \I\{ Y^{(m)}_{i}<0\}\right]+\E \left[\left\{Y_{i}^{(m)} \vee (-c)\right\}^2 \I\{Y^{(m)}_{i}\ge 0\}\right]\\
    &\quad \le c^2 +\int_{0}^{\infty} \pr\left\{(Y_{i}^{(m)})^2 \I\{Y^{(m)}_{i}\ge 0\}>u\right\} \d u\\
    &\quad = c^2 + 2\int_{0}^{\infty} u \pr(Y_{i}^{(m)}>u)  \d u \\
    &\quad \le c^2 + 2\int_{0}^{\infty} u \exp(-u)  \d u  = c^2 + 2<\infty.
\end{align*}
The desired result follows from the decomposition that
\begin{align*}
\E \left\{(Y_{i}^{(m)} + c)_{+}^{2}\right\}& = \E \left[\left\{ (Y_{i}^{(m)} \vee (-c)) + c \right\}^{2}\right]\\
& =\E \left\{(Y_{i}^{(m)} \vee(-c))^2\right\} + 2 c \E \left\{ Y_{i}^{(m)} \vee (-c)\right\} + c^2 < \infty.
\end{align*}
The proof is complete.
\end{proof}

To establish the asymptotic normality of the moment estimators, we need a convergence result for the weighted tail empirical process. A similar result has already been discussed in Proposition~3.1 of~\citet{EDL06}. The difference from their work is that we use a different weighting function $(x\wedge y)^\eta$, while they weighted the process by $(x\vee y)^\eta$. However, our result can be derived through their proof approach with minor modifications. Another related result is Lemma 1 from~\citet{CEDZ2015}. In that lemma, the weight function of the bivariate empirical tail process is $x^\eta$. In our case, the weight function $(x\wedge y)^{\eta}$ can be decomposed into two cases: $x>y$ and $x\le y$. By analyzing these cases separately, our setting can be reduced to the one considered in the cited lemma, and hence, similar conclusions can be derived.
However, the result in their work was stated without a formal proof. For completeness, we present the proof in the following lemma. We note that this lemma applies to general MGPDs and is not restricted to the \HR{} MGPD.

\begin{lemma}
\label{lemma:WTP}
Let $\bY$ be an arbitrary MGP distributed random vector, and let $\bm{X}_{t}=(X_{t1},\ldots,X_{td})$, $t=1,\ldots,n$, denote the independent copies of a random vector $\bm{X}$, which has continuous margins and such that the marginally transformed vector $\tilde{\bm X}$ in \eqref{eq:marginal_Transform} lies in the domain of attraction of $\bY$. If \cref{Asp:IntermediateSeq} holds, then for $T>0$ and $0\le \eta < 1/2$, the process
\begin{align*}
    &\Bigg(\frac{\nu_{n,ij}(x,y)}{(x\wedge y)^{\eta}}, (x,y)\in (0,T]^2,\;  \frac{\nu_{n,s}(u)}{u^{\eta}}, u\in (0,T], \; i,j\in V, \; i< j, \; s\in V,\bigg)
\end{align*}
converges jointly in distribution to
\begin{align*}
    \Bigg(\frac{W_{ij}(x,y)}{(x \wedge y)^{\eta}}, (x,y)\in (0,T]^2,\; \frac{W_{s}(u)}{u^{\eta}}, u\in (0,T], \; i,j\in V, \; i< j, \; s\in V\Bigg)
\end{align*}
as $n\to\infty$ in $(\ell^{\infty}((0, T]^2))^{d(d-1)/2} \times (\ell^{\infty}((0, T]))^{d}$.
\end{lemma}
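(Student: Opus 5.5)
The plan is the standard one for weighted tail empirical processes: joint finite-dimensional convergence, plus asymptotic tightness of the weighted process in each coordinate. Tightness away from the axes is already known. The only new work is controlling the weighted process near the region where $x\wedge y$ is small.

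\emph{Finite-dimensional distributions.} Fix finitely many points $(x,y)$ and $u$ in the index sets. Each coordinate $\nu_{n,I}(\bm{x}_I)$ is $\sqrt{k}$ times a centred average of $n$ i.i.d.\ indicators with success probability of order $k/n$. Their covariances are
\[
\frac{n}{k}\Bigl\{C_{I\cup J}\bigl(k(\bm{x}\wedge\bm{y})/n\bigr)-C_I C_J\Bigr\}\to R(\bm{x}\wedge\bm{y})
\]
by \eqref{eq:UTDF}, since $k/n\to0$. The Lindeberg condition holds trivially because the summands are bounded by $k^{-1/2}$. The multivariate Lindeberg CLT therefore gives convergence to the Gaussian limit with covariance \eqref{eq:covW}, jointly for all pairs $(i,j)$ and margins $s$. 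Dividing by the deterministic weights $(x\wedge y)^\eta$ and $u^\eta$ is harmless at fixed points.

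\emph{Tightness on $[\delta,T]^2$.} On this region the weight is bounded above and below. Tightness of the unweighted processes $\nu_{n,ij}$ and $\nu_{n,s}$ in $\ell^\infty$ is classical: these are tail empirical processes indexed by a VC class of lower orthants, and one can cite e.g.\ \citet{EDL06} or Einmahl's results on tail empirical processes. Joint tightness follows from marginal tightness, so the main obstacle is only the region near zero.

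\emph{Negligibility near zero.} We need, for every $\varepsilon>0$,
\[
\lim_{\delta\downarrow0}\limsup_{n}\pr\Bigl(\sup_{(x,y)\in(0,T]^2,\;x\wedge y\le\delta}\frac{|\nu_{n,ij}(x,y)|}{(x\wedge y)^\eta}>\varepsilon\Bigr)=0,
\]
and likewise for the marginal processes; the latter is exactly the univariate weighted case in \citet{EDL06} and \citet{CEDZ2015}. For the bivariate term, split into the cases $x\le y$ (weight $x^\eta$) and $x>y$ (weight $y^\eta$). By symmetry consider $x\le y$, $x\le\delta$. The indicator $\I\{U_{ti}\le kx/n,U_{tj}\le ky/n\}$ is dominated by $\I\{U_{ti}\le kx/n\}$, and the function class $\{\I\{U_i\le kx/n, U_j\le ky/n\}: x\le \delta, y\le T\}$ has envelope $\I\{U_i\le k\delta/n\}$, with variance $\le x\,k/n$. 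This is exactly the setting of Lemma 1 of \citet{CEDZ2015} with weight $x^\eta$.

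Concretely, I would use a peeling argument over dyadic shells $x\in(2^{-m-1}\delta,2^{-m}\delta]$. On each shell, apply a maximal (Bernstein/Talagrand-type) inequality for the VC class of indicators with envelope variance $\lesssim 2^{-m}\delta$. This gives
\[
\E\sup_{\text{shell}}|\nu_{n,ij}|\lesssim (2^{-m}\delta)^{1/2}
\]
up to logarithmic factors and a $k^{-1/2}$ term; the latter is controlled because $x\ge 1/k$ effectively, since below that $T_n$ is mostly zero. Dividing by $(2^{-m-1}\delta)^\eta$ and summing over $m$ gives a bound of order $\delta^{1/2-\eta}\sum_m 2^{-m(1/2-\eta)}$. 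This is finite and tends to zero as $\delta\to0$, precisely because $\eta<1/2$.

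The delicate part, and where I expect the most work, is the range $x<1/k$. There the variance bound is weak relative to the jump size $k^{-1/2}$. I would handle it separately: $\sup_{x\le 1/k}\sqrt{k}\,T_{n,i}(x)/x^\eta$ is controlled through the smallest order statistic of $U_{ti}$ (which exceeds $\lambda/n$ with high probability), together with $\sqrt{k}R_{n,ij}(x,y)/x^\eta\le \sqrt{k}\,x^{1-\eta}\to0$ uniformly on $x\le 1/k$. The same bound applies for the weighted limit process: $W_{ij}(x,y)/(x\wedge y)^\eta\to0$ as $x\wedge y\to0$ almost surely. This follows from L\'evy's modulus-type bounds for $W$, since $\operatorname{Var} W_{ij}(x,y)\le x\wedge y$. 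Hence the limit has continuous sample paths in $\ell^\infty$, which completes the argument.
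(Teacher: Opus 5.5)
Your proposal is correct, but it takes a different route from the paper. The paper does not split the index set. It applies the bracketing central limit theorem for triangular arrays (Theorem 2.11.9 of \citet{VW96}) directly to the class of weighted indicators $\I_{(\bm{0},\bm{x}_I]}/(\min_{i\in I}x_i)^{\eta}$, and checks three things: total boundedness under the limiting covariance semimetric $\rho$; a Lindeberg condition for the weighted envelope $k^{-1/2}(nU_{ti}/k)^{-\eta}$; and a bracketing entropy bound. In that bound the whole strip $\{x\wedge y\le\alpha\}$ is a single bracket of squared size $\lesssim\alpha^{1-2\eta}$, and the rest is covered by a geometric grid with ratio $\theta=1-\varepsilon^3$. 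The restriction $\eta<1/2$ enters through square-integrability of $u^{-\eta}$ at $0$ in the last two checks, and the region of small $x$, including $x<1/k$, needs no separate treatment. You instead reuse the classical unweighted convergence on $[\delta,T]^2$ and prove negligibility of the strip by dyadic peeling with a VC maximal inequality. You add an order-statistic argument below $1/k$ and a separate argument for the limit near the axes. Your route is more modular and makes the role of $\eta<1/2$ visible through summability of $(2^{-m}\delta)^{1/2-\eta}$. The paper's route is self-contained and gives the $\rho$-continuity of the weighted limit for free. One step in your sketch needs to be made precise. The maximal inequality must be taken relative to the localized envelope $F_m=\I\{U_i\le k2^{-m}\delta/n\}$, so that any logarithmic factor depends on $\|F_m\|_{P,2}/\sigma_m=O(1)$. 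A Bernstein-type bound stated with the uniform bound $1$ would produce a factor of order $\sqrt{\log(n/k)}$, which does not vanish, and your summation would then fail for fixed $\delta$. The cleanest choice is Theorem 2.14.1 of \citet{VW96}, $\E^*\|\mathbb{G}_n\|_{\mathcal{F}}\lesssim J(1,\mathcal{F})\|F\|_{P,2}$, where $J$ is uniformly bounded over subclasses of the VC class of orthant indicators. It gives $\E\sup|\nu_{n,ij}|\lesssim(2^{-m}\delta)^{1/2}$ over the $m$-th shell for every $m\ge 0$, with no logarithmic or $k^{-1/2}$ term, so your separate treatment of $x<1/k$ becomes unnecessary.
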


\begin{proof}
For nonempty subset $I\subset V$ with $|I|\le 2$ and $\bm{x}_{I}=(x_i,i\in I)\in (0,T]^{|I|}$, define
\[
f_{I}(\bm{x}_I):=f_{I}(\bm{x}_I,\cdot)=\frac{\I_{(\bm{0},\bm{x}_I]}(\cdot)}{(\min_{i\in I} x_i)^{\eta}}
\]
as a function on $[0,\infty]^d\setminus\{(\infty,\ldots,\infty)\}$, where $(\bm{0},\bm{x}]$ denotes the Cartesian product $\prod_{i\in V}(0,y_{i}]$ with $y_i=x_i$ if $i\in I$ and $y_i=\infty$ if $i\in V\setminus{I}$. That is, for any $\bm{z}\in [0,\infty]^d\setminus\{(\infty,\ldots,\infty)\}$,
\[
f_{I}(\bm{x}_I)(\bm{z})=f_{I}(\bm{x}_I,\bm{z})=\frac{\I\{z_i\le x_i,i\in I\}}{(\min_{i\in I} x_i)^{\eta}}.
\]
Denote the class of $f_{I}(\bm{x}_I)$ by $\mathcal{F}$, i.e.,
\[
\mathcal{F}=\{f_{I}(\bm{x}_I), \; I\subset V, \; 1\le |I|\le 2, \; \bm{x}_{I}=(x_i,i\in I)\in (0,T]^{|I|}\}.
\]
For $I,J\subset V$ and $f_{I}(\bm{x}_I),f_{J}(\bm{y}_J)\in \mathcal{F}$, define a semi-metric $\rho$ on $\mathcal{F}$ by
\begin{align*}
    \rho (f_{I}(\bm{x}_I),f_{J}(\bm{y}_{J}))=\left[\E\left\{\left(\frac{W_{I}(\bm{x}_{I})}{(\min_{i\in I} x_i)^{\eta}}-\frac{W_{J}(\bm{y}_{J})}{(\min_{i\in J} y_i)^{\eta}} \right)^2\right\}\right]^{1/2}.
\end{align*}
Let
\[
Z_{n,t}=\frac{1}{\sqrt{k}} \delta_{\left(\frac{n}{k}U_{ti},\;i\in V\right)}
\]
be the scaled Dirac measure on $[0,\infty]^d\setminus\{(\infty,\ldots,\infty)\}$.
For any $f\in \mathcal{F}$, let
\[
Z_{n,t}(f)=\int f \d Z_{n,t}.
\]
Then we have
\[
Z_{n,t}(f_{I}(\bm{x}_{I}))=\frac{1}{\sqrt{k}} \frac{\I\left\{U_{ti}<kx_{i}/n, \; i\in I\right\}}{(\min_{i\in I}x_{i})^{\eta}}
\]
and
\[
\frac{\nu_{n,I}(\bm{x}_{I})}{(\min_{i\in I}x_{i})^{\eta}}=\sum_{t=1}^{n} \left\{Z_{n,t}(f_{I}(\bm{x}_I))-\E \left(Z_{n,t}(f_{I}(\bm{x}_I))\right)\right\}.
\]

Note that for each $t$, $Z_{n,t}$ can be viewed as a stochastic process indexed by functions $f_I(\bm{x}_I)\in \mathcal{F}$. For independent stochastic processes $Z_{n,t}$, $t=1,\ldots,n$, indexed by a totally bounded semi-metric space $(\mathcal{F},\rho)$, the weak convergence of $\sum_{t=1}^n (Z_{n,t}-\E Z_{n,t})$ is implied by the weak convergence of its finite-dimensional distributions and its asymptotic tightness. Note that the finite-dimensional convergence follows from the Cram\'er--Wold device together with the univariate Lindeberg--Feller central limit theorem, since convergence of all linear combinations implies convergence of multivariate distributions. Hence it suffices to establish asymptotic tightness. To this end, let $\{\mathcal{F}_{\varepsilon j}\}_{i=1}^{N_{\varepsilon}}$ be a partition of $\mathcal{F}$ such that  $\mathcal{F}=\bigcup_{j=1}^{N_{\varepsilon}} \mathcal{F}_{\varepsilon j}$ and
\begin{align}
\label{eq:TightFiniteMoment}
    \sum_{t=1}^{n} \E^{\ast} \left(\sup_{f,g\in \mathcal{F}_{\varepsilon j}} \big| Z_{n,t}(f)-Z_{n,t}(g)\big|^2\right) \le \varepsilon^2
\end{align}
for each $j=1,,\ldots,N_{\varepsilon}$, where $\E^{\ast}$ is the outer integral (for definition see \citet{VW96}). For $\varepsilon>0$, let $N_{[\;]}(\varepsilon,\mathcal{F},\rho)$ be the minimal number of $\varepsilon$-brackets (with respect to $\rho$) required to cover $\mathcal{F}$. By Theorem 2.11.9 in \citet{VW96}, it is sufficient to verify:
\begin{equation}
\label{eq:SumMoments}
\sum_{t=1}^{n}  {\E}^{\ast} \left[ \| Z_{n,t} \|_{\mathcal{F}} \I(\Vert Z_{n,t} \Vert_{\mathcal{F}}>\lambda) \right] \to 0
\end{equation}
as $n\to\infty$ for every $\lambda>0$, where $\Vert Z_{n,t}\Vert_{\mathcal{F}}=\sup_{f\in\mathcal{F}}|Z_{n,t}(f)|$, and
\begin{equation}
\label{eq:entropy}
\int_{0}^{\delta_n} \sqrt{\log N_{[\;]}(\varepsilon,\mathcal{F},\rho)} \d \varepsilon \to 0
\end{equation}
for $\delta_n\to 0$. Together with~\eqref{eq:TightFiniteMoment}, this implies the tightness of $\sum_{t=1}^n (Z_{n,t}-\E( Z_{n,t}))$.

To validate \eqref{eq:SumMoments}, we first show that $(\mathcal{F},\rho)$ is a bounded space. Given $I\subset V$, let $\mathcal{F}_{I}=\{f_{I}(\bm{x}_{I}), \bm{x}_I\in (0,T]^{|I|}\}$. Recall that $f_I$ are defined for nonempty subsets $I\subset V$ and $|I|\le 2$. Since there are only finitely many such subsets, the totally boundedness of $(\mathcal{F},\rho)$ is equivalent to the totally boundedness of all the subclass $\mathcal{F}_{I}$, $I\subset V$ with $|I|\le 2$. For brevity, we only show that $\mathcal{F}_{I}$ is totally bounded for $I=\{i,j\}$ where $|I|=2$. The remaining cases are similar.

For every $f_{ij}(x,y)\in \mathcal{F}_I$, we show that the mapping: $(x,y) \mapsto f_{ij}(x,y)$ is uniformly continuous from $(0,T]^2$ to $(\mathcal{F}_I,\rho)$. Since $(0,T]^2$ is totally bounded under the Euclidean metric, this will imply total boundedness of $(\mathcal{F}_I,\rho)$. For $x,y,u,v\in (0,T]$, there are four possible orderings of $(x,y)$ and $(u,v)$. Without loss of generality, let $x\ge y$, $u\ge v$, and $x\ge u$, $y\ge v$. For any $\delta>0$, assuming $|x-u|\le \delta$ and $|y-v|\le \delta$. Then we have
\begin{align*}
    \rho^{2}(f_{ij}(x,y),f_{ij}(u,v))=&\E \left\{\left(\frac{W_{ij}(x,y)}{(x\wedge y)^{\eta}}-\frac{W_{ij}(u,v)}{(u\wedge v)^{\eta}}\right)^2\right\}\\
    =&\E \left\{\left(\frac{W_{ij}(x,y)}{y^{\eta}}-\frac{W_{ij}(u,v)}{v^{\eta}}\right)^2\right\}\\
    =&\E \left\{\left(\frac{v^{\eta} W_{ij}(x,y)-y^{\eta} W_{ij}(u,v)}{(yv)^{\eta}}\right)^2\right\}\\
    =&\frac{v^{2\eta} R_{ij}(x,y)-2 v^{\eta} y^{\eta} R_{ij}(u,v) + y^{2\eta} R_{ij}(u,v)}{(yv)^{2\eta}}.
\end{align*}
Note that $R_{ij}(x,y)\le x\wedge y$. If $v\le \delta$, we have
\begin{align*}
    \rho^{2}(f_{ij}(x,y),f_{ij}(u,v))\le &\frac{R_{ij}(x,y)}{y^{2\eta}}+\frac{3R_{ij}(u,v)}{v^{2\eta}}\\
    \le & y^{1-2\eta} +3 v^{1-2\eta} \le  (2\delta)^{1-2\eta}+3\delta^{1-2\eta} \le 5 \delta^{1-2\eta}.
\end{align*}
Otherwise, for $v>\delta$, by noting that
\[
|R_{ij}(x,y)-R_{ij}(u,v)|\le |x-u|+|y-v| \le 2\delta,
\]
we obtain
\begin{align*}
    \rho^{2}(f_{ij}(x,y),f_{ij}(u,v))
    &= \frac{R_{ij}(u,v)(y^{\eta}-v^{\eta})^2}{(yv)^{2\eta}}+\frac{v^{2\eta}[R_{ij}(x,y)-R_{ij}(u,v)]}{(yv)^{2\eta}}\\
    &\le \frac{R_{ij}(u,v)(y^{\eta}-v^{\eta})^2}{(yv)^{2\eta}}+\frac{2\delta v ^{2\eta}}{(yv)^{2\eta}}\\
    &\le  v^{1-4\eta} (y^{\eta}-v^{\eta})^2 +2 \delta^{1-2\eta}\\
    &\le  \eta^2 v^{1-4\eta} v^{2\eta-2} (y-v)^2 +2 \delta^{1-2\eta}\\
     &\le v^{-1-2\eta} (y-v)^2+2 \delta^{1-2\eta} \le 3 \delta^{1-2\eta},
\end{align*}
where the fourth step follows from the mean value theorem. Hence, for every $\varepsilon>0$, there exists $\delta>0$ such that for all $|x-u|\le \delta$ and $|y-v|\le \delta$, we have $\rho^{2}(f_{ij}(x,y),f_{ij}(u,v))<\varepsilon$. This completes the proof for the case $x\ge y$, $u\ge v$, and $x\ge u$, $y\ge v$. The other three cases can be followed by symmetry. Consequently, the map $(x,y) \mapsto f_{ij}(x,y)$ is uniformly continuous from $(0,T]^2$ to $(\mathcal{F}_I,\rho)$.
Since there are finitely many $I \subset V$ with $|I|\le 2$, the whole class $(\mathcal{F},\rho)$ is totally bounded.

Now we show that~\eqref{eq:SumMoments} holds for every $\lambda>0$.
For $I\subset V$ and $\bm{x}_I\in (0,T]^{|I|}$, assume ${i_0}=\{i:x_i=\min_{i\in I} x_i\}$. Since
\begin{align*}
\sup_{f_{I}(\bm{x}_I)\in \mathcal{F}} |Z_{n,t}(f_{I}(\bm{x}_I))|
&= \sup_{f_{I}(\bm{x}_I)\in \mathcal{F}} \frac{\I\left\{U_{ti}<kx_i/n,\; i\in I\right\}}{\sqrt{k} (\min_{i\in I} x_{i})^{\eta}} \\
&\le  \frac{1}{\sqrt{k}} \frac{\I(U_{ti_0}\le \frac{k}{n}x_{i_0})}{x_{i_0}^{\eta}}
\le \frac{1}{\sqrt{k}} \frac{1}{\left(\frac{n}{k} U_{t i_0}\right)^{\eta}},
\end{align*}
we have for each $\lambda>0$ and $0\le \eta < 1/2$,
\begin{align*}
    &\sum_{t=1}^{n} \E^{\ast} \left\{\Vert Z_{n,t} \Vert_{\F} \I\left(\Vert Z_{n,t} \Vert_{\F}>\lambda\right)\right\}\\
&\quad \le \frac{n}{\sqrt{k}} \E \left[ \left(\frac{n}{k} U_{t i_0}\right)^{-\eta} \I\left\{\frac{1}{\sqrt{k}} \frac{1}{\left(\frac{n}{k} U_{ti_0}\right)^{\eta}}>\lambda\right\}\right]\\
&\quad =  \frac{n}{\sqrt{k}} \left(\frac{n}{k}\right)^{-\eta} \int_{0}^{(k/n)(\sqrt{k}\lambda)^{-1/\eta}} x^{-\eta} \d x \\
&\quad =  \frac{1}{1-\eta} \lambda^{1-1/\eta} k^{1-1/(2\eta)}\to 0
\end{align*}
as $n\to\infty$. This establishes the validity of~\eqref{eq:SumMoments}.

It remains to show that~\eqref{eq:entropy} is satisfied for every sequence $\delta_{n}\to 0$. Although the original goal is to establish the result on $[0,T]$, we follow~\citet{EDL06} and take $T=1$ for simplicity. The general case follows analogously. Let $\mathcal{F}^{j}=\{f_I(\bm{x}_I), \, I\subset{V}, \, |I|=j\}$. It suffices to verify~\eqref{eq:entropy} for $j=1,2$. We only treat the case $j=2$, the case $j=1$ being similar.
Fix $\varepsilon>0$ sufficiently small. Set $\alpha=\varepsilon^{3/(1-2\eta)}$ and $\theta=1-\varepsilon^3$. Define
\begin{align*}
\mathcal{F}(\alpha)=& \left\{f_{I}(x,y)\in \mathcal{F}^{2}: x\wedge y \le \alpha\right\},
\end{align*}
and for intergers $r,s\ge 0$,
\begin{align*}
\mathcal{F}(r,s)=&\left\{f_{I}(x,y)\in \mathcal{F}^{2}: \theta^{r+1}\le x \le \theta^{r}, \; \theta^{s+1}\le y \le \theta^{s}\right\}.
\end{align*}
Then
\begin{align*}
\mathcal{F}^2= \mathcal{F}(\alpha) \cup \left(\bigcup_{r,s=0}^{[\log \alpha/ \log \theta]} \mathcal{F}(r,s)\right).
\end{align*}

We first verify~\eqref{eq:entropy} for the class $\mathcal{F}(\alpha)$. By the definition of $Z_{n,t}$,
\begin{align*}
& \sum_{t=1}^{n} \E^{\ast} \left\{\sup_{f,g\in \mathcal{F}(\alpha)} \left(Z_{n,t}(f)-Z_{n,t}(g)\right)^{2}\right\}
= n \E \left\{\sup_{f,g\in \mathcal{F}(\alpha)} \left(Z_{n,1}(f)-Z_{n,1}(g)\right)^{2}\right\}
\\
& \quad \le  4n \E  \left\{\sup_{ f \in \mathcal{F} (\alpha)} Z_{n,1}^{2}(f)\right\}
= \frac{4n}{k} \E \left\{\sup_{ \substack{x,y>0
\\
x\wedge y \le \alpha}} \frac{\I\left(U_{1i}<\frac{kx}{n}, U_{1j}<\frac{ky}{n} \right)}{(x\wedge y)^{2\eta}} \right\}
\\
& \quad \le \frac{4n}{k}  \E \left\{\left(\frac{n}{k} U_{1i}\right)^{-2\eta} \I \left(\frac{n}{k} U_{1i}<\alpha \right)\right\}
\\
&= \quad \frac{4n}{k} \int_{0}^{\alpha k/n} \left(\frac{n}{k}x\right)^{-2\eta} \d x
= \frac{4 \alpha^{1-2\eta}}{1-2\eta}  \le \varepsilon^3.
\end{align*}
Thus the contribution of $\mathcal{F}(\alpha)$ is negligible.
Now we establish~\eqref{eq:entropy} for the classes $\{\mathcal{F}(r,s), 0\le r,s\le [\log \alpha/ \log \theta]\}$. For given $r,s$, without loss of generality, we assume $r=r \wedge s$. Then
\begin{align*}
& \sum_{t=1}^{n} \E^{\star} \left\{\sup_{f,g\in \mathcal{F}(r,s)} \left(Z_{n,t}(f)-Z_{n,t}(g)\right)^2\right\}\\
& \quad \le n \E \left\{ \left(\sup_{f\in\mathcal{F}(r,s)} Z_{n,1}(f)-\inf_{f\in\mathcal{F}(r,s)} Z_{n,1}(f)\right)^2 \right\}\\
& \quad \le  \frac{n}{k} \E\left\{\left(\frac{\I\left\{U_{1i}\le \frac{k}{n} \theta^{r}, U_{1j}\le \frac{k}{n} \theta^{s}  \right\}}{(\theta^{r+1}\wedge \theta^{s+1} )^{\eta}} - \frac{\I\left\{U_{1i}\le \frac{k}{n} \theta^{r+1}, U_{1j}\le \frac{k}{n} \theta^{s+1}\right\}}{(\theta^{r}\wedge \theta^{s} )^{\eta}} \right)\right\}^2\\
&\quad = \frac{n}{k} \E\Bigg[\left\{\I\left\{U_{1i}\le \frac{k}{n} \theta^{r}, U_{1j}\le \frac{k}{n} \theta^{s}\right\} \left(\theta^{-\eta(s+1)}-\theta^{-\eta s}\right)\right.\\
&\qquad \qquad  \left.+\left\{\I\left\{U_{1i}\le \frac{k}{n} \theta^{r}, U_{1j}\le \frac{k}{n} \theta^{s} \right\}-\I\left\{U_{1i}\le \frac{k}{n} \theta^{r+1}, U_{1j}\le \frac{k}{n} \theta^{s+1} \right\}\right\} \theta^{-\eta s}\right\}^2\Bigg]\\
& \quad \le \frac{2n}{k} \left[C_{ij}\left(\frac{k}{n}\theta^{r}, \frac{k}{n} \theta^{s}\right) \frac{\left(\theta^{-\eta}-1\right)^2}{\theta^{2\eta s}}  \right. \\
& \qquad \qquad + \left.\left\{C_{ij}\left(\frac{k}{n} \theta^{r},\frac{k}{n} \theta^{s} \right)-C_{ij}\left(\frac{k}{n} \theta^{r+1},\frac{k}{n} \theta^{s+1}\right)\right\}\theta^{-2\eta s}\right]\\
& \quad \le   2 \theta^{(1-2\eta)s} \left\{(\theta^{-\eta}-1)^2+2(1-\theta)\right\}
\le  2 \left\{(\theta^{-1/2}-1)^2+2(1-\theta)\right\} \le \varepsilon^6 +4 \varepsilon^3 \le 5 \varepsilon^3,
\end{align*}
where $(\theta^{-\eta}-1)^2\le \varepsilon^6$ holds for sufficiently small $\varepsilon$. Thus, the number of elements of the partition of $\mathcal{F}^{2}$ is bounded by $C \varepsilon^{-6} \log(1/\varepsilon)^2$ for some constant $C>0$. Consequently, \eqref{eq:entropy} holds as $n\to \infty$. This completes the proof of the lemma.
\end{proof}

Recall that $W(\bm{x})$ is a mean-zero Gaussian process defined in~\cref{sec:Results} with continuous trajectories.
Since the weighted empirical tail process is asymptotically tight, by Theorems 1.5.7 and 1.5.8 of \citet{VW96}, the processes take values in a separable subspace of  $(\ell^{\infty}((0, T]^2))^{d(d-1)/2} \times (\ell^{\infty}((0, T]))^{d}$.
Hence, by the Skorokhod's representation theorem (cf. Theorem 1.10.4 in \citet{VW96}), there exists a probability space with processes $\nu^{\ast}_{n,ij}(x,y)$, $\nu^{\ast}_{n,s}(u)$
and $W^{\ast}_{ij}(x,y)$, $W^{\ast}_{s}(u)$
for all $i,j,s\in V$ and $i< j$ such that for $x,y,u\in (0,T]$,
\begin{align*}
    \left(\nu^{\ast}_{n,ij}(x,y),\nu^{\ast}_{n,s}(u), \; i,j,s\in V, i< j\right) &\overset{d} = \left(\nu_{n,ij}(x,y),\nu_{n,s}(u),\; i,j,s\in V, \; i<j\right),\\
    \left( W_{ij}^{\ast}(x,y), W_{s}^{\ast}(u),\; i,j,s\in V, i<j\right) & \overset{d} =(W_{ij}(x,y), W_{s}(u),\; i,j,s\in V,\; i<j),
\end{align*}
and the convergence in \cref{lemma:WTP} holds almost surely.
Then, for $0\le \eta<1/2$, we have
\begin{align}
\label{eq:uniformconverge}
\nonumber
    \sup_{0<x,y\le T} \frac{\big|\nu^{\ast}_{n,ij}(x,y)- W_{ij}^{\ast}(x,y)\big|}{(x\wedge y)^{\eta}} =o_{p}(1),\\
    \sup_{0<u\le T} \frac{\big|\nu^{\ast}_{n,s}(u)- W_{s}^{\ast}(u)\big|}{u^{\eta}} =o_{p}(1).
\end{align}
We shall work within this probability space while keeping the notation unchanged. Next, we establish an inequality analogous to Lemma 3.2 in \citet{EDL06}.

\begin{lemma}
\label{lemma:inequalityadd}
Let $0\le \eta <1/2$. For any $i,j\in V$ and sufficiently small $\varepsilon \in (0,1)$, we have
\[
\pr\left(\sup_{ \substack{0<x\le \varepsilon}} \frac{|W_{ij}(x,1)|}{x^{\eta}} \ge \lambda\right)\le 4 \sum_{l=0}^{\infty} \exp\left(-\frac{\lambda^2}{2^{1+2\eta}} \frac{2^{l(1-2\eta)}}{\varepsilon^{1-2\eta}}\right).
\]
\end{lemma}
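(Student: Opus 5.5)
We follow the dyadic blocking (peeling) argument of Lemma~3.2 in \citet{EDL06}. Write the interval $(0,\varepsilon]$ as the union of the dyadic blocks $I_l=(\varepsilon 2^{-(l+1)},\varepsilon 2^{-l}]$, $l\ge 0$. On $I_l$ we have $x^{\eta}\ge (\varepsilon 2^{-(l+1)})^{\eta}$. A union bound therefore gives
\[
\pr\left(\sup_{0<x\le\varepsilon}\frac{|W_{ij}(x,1)|}{x^{\eta}}\ge\lambda\right)\le\sum_{l=0}^{\infty}\pr\left(\sup_{0<x\le \varepsilon 2^{-l}}|W_{ij}(x,1)|\ge \lambda\,\varepsilon^{\eta}2^{-(l+1)\eta}\right).
\]
It then suffices to bound each term by $4\exp\{-\mu_l^2/(2\varepsilon 2^{-l})\}$, where $\mu_l=\lambda\varepsilon^{\eta}2^{-(l+1)\eta}$. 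Indeed,
\[
\frac{\mu_l^2}{2\varepsilon 2^{-l}}=\frac{\lambda^2}{2^{1+2\eta}}\,\frac{2^{l(1-2\eta)}}{\varepsilon^{1-2\eta}},
\]
which is exactly the exponent in the claim.

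To get the per-block bound, we use the structure of $x\mapsto W_{ij}(x,1)$. By \eqref{eq:covW}, for $x\le y$ we have $\E\{W_{ij}(x,1)W_{ij}(y,1)\}=R_{ij}(x\wedge y,1)=R_{ij}(x,1)$. Hence $x\mapsto W_{ij}(x,1)$ is a centered Gaussian process with independent increments, started at $0$ (since $R_{ij}(0,1)=0$), with variance function $g(x)=R_{ij}(x,1)$, which is nondecreasing and continuous. So it is a time-changed Brownian motion, $W_{ij}(x,1)\overset{d}=B(g(x))$, and has continuous paths. Using $g(x)\le x$ (because $R_{ij}(x,1)\le x\wedge 1$), the supremum of $|W_{ij}(\cdot,1)|$ over $(0,h]$ is stochastically dominated by $\sup_{0\le s\le h}|B(s)|$.

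The reflection principle gives
\[
\pr\left(\sup_{s\le h}B(s)\ge\mu\right)=2\pr(B(h)\ge\mu)\le 2\exp\{-\mu^2/(2h)\}.
\]
Doubling this for the absolute value gives $4\exp\{-\mu^2/(2h)\}$. Taking $h=\varepsilon2^{-l}$ and $\mu=\mu_l$ yields the desired per-block bound. (Alternatively, one can use L\'evy's/Ottaviani's maximal inequality for processes with independent symmetric increments, which gives the same constant.) Summing over $l$ proves the lemma. The condition that $\varepsilon$ be small is only needed so that the blocks lie inside $(0,1]$, where $g(x)\le x$ holds.

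The main point to check carefully is the independent-increments / time-change identification. It requires a joint covariance computation: for $x_1\le x_2\le x_3\le x_4$, the covariance of $W(x_2,1)-W(x_1,1)$ and $W(x_4,1)-W(x_3,1)$ equals $R(x_2,1)-R(x_1,1)-R(x_2,1)+R(x_1,1)=0$. The identification also requires that $g$ is monotone. Monotonicity of $R_{ij}$ in each argument gives this, so I expect this step to go through.
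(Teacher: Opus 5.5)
Your proof is correct and follows essentially the paper's own argument. Both split $(0,\varepsilon]$ into the dyadic blocks $[\varepsilon 2^{-(l+1)},\varepsilon 2^{-l}]$ and apply a union bound. Both treat $W_{ij}(\cdot,1)$ as a Brownian motion time-changed by the clock $R_{ij}(\cdot,1)\le x$ and then use a reflection/maximal inequality. The paper gets this representation via Dambis--Dubins--Schwarz and you get it via matching finite-dimensional laws; either works.

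The one real difference is the final Gaussian tail step. You use $\pr(N\ge t)\le \exp(-t^2/2)$, while the paper uses Mills' ratio. Mills' ratio carries a $1/t$ prefactor, and to absorb it into the constant $4$ the paper has to take $\varepsilon$ small relative to $\lambda$. Your bound therefore holds for every $\varepsilon\in(0,1)$ and every $\lambda>0$ with no such smallness condition.
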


\begin{proof}
Recall that for any $i,j\in V$, $\{W_{ij}(x,1)$, $x\in [0,\infty]\}$ defined in~\eqref{eq:covW} is a centered Gaussian process. We firstly show that, for $0\le a<b$,
\begin{align}
\label{eq:GaussianExtremes}
\pr\left\{\sup_{x\in [a,b] } |W_{ij}(x,1)|\ge \lambda \right\} \le 2 \pr\left\{ |W_{ij}(b,1)|\ge \lambda \right\}.
\end{align}
To prove the result, let $\mathcal{F}_x=\sigma\{W_{ij}(s,1):0\leq s\leq x\}$.
For $0\leq s\leq x\leq y$, we have
\[
\begin{aligned}
&\cov\{W_{ij}(y,1)-W_{ij}(x,1),W_{ij}(s,1)\} \\
&=\cov \{W_{ij}(y,1),W_{ij}(s,1)\}-\cov\{W_{ij}(x,1),W_{ij}(s,1)\}\\
&=R(y\wedge s,1)-R(x\wedge s,1)\\
&=
R(s,1)-R(s,1)=0.
\end{aligned}
\]
Since the process is Gaussian, the increment $W_{ij}(y,1)-W_{ij}(x,1)$ is independent of $\mathcal{F}_x$. Consequently,
\[
\E\{W_{ij}(y,1)\mid \mathcal{F}_x\}
=
W_{ij}(x,1),
\]
which shows that $\{W_{ij}(x,1):x\geq0\}$ is a continuous martingale. Moreover, by the continuity of the covariance function \(R(\cdot,1)\),
\(W_{ij}(\cdot,1)\) admits a continuous modification. From the Dambis--Dubins--Schwarz representation theorem, there exists
a standard Brownian motion $B$ such that
\[
W_{ij}(x,1)=B_{R(x,1)},\qquad x\geq0 .
\]
Consequently,
\[
\sup_{x\in[0,b]}|W_{ij}(x,1)|
=
\sup_{t\in[0,R(b,1)]}|B_t|.
\]
Since $R(x,1)$ is an increasing function on $x\in [a,b]$, applying Lemma 1.2 of \citet{OP73} yields
\[
\pr\left\{\sup_{x\in[a,b]}|W_{ij}(x,1)|\geq\lambda\right\}
\leq
\pr\left\{\sup_{x\in[0,b]}|W_{ij}(x,1)|\geq\lambda\right\} \leq
2\pr\{|W_{ij}(b,1)|\geq\lambda\}.
\]

Next, we show the main result. For $l=1,2,\ldots$, define
\[
\mathcal{A}_{l}=\left\{x:\frac{\varepsilon}{2^{l+1}}\le x \le \frac{\varepsilon}{2^l}\right\},
\]
then using~\eqref{eq:GaussianExtremes}, we have
\begin{align}
\label{eq:IeqW}
\nonumber
\pr\left\{\sup_{0<x\le \varepsilon} \frac{|W_{ij}(x,1)|}{x^{\eta}} \ge \lambda\right\}
&= \pr\left\{\sup_{l\in\{0,1,\ldots\}} \sup_{x\in \mathcal{A}_l } \frac{|W_{ij}(x,1)|}{x^{\eta}} \ge \lambda\right\}\\
\nonumber
&\le \sum_{l=0}^{\infty} \pr\left\{\sup_{x\in \mathcal{A}_l} \frac{\vert W_{ij}(x,1) \vert}{x^{\eta}}\ge \lambda\right\}\\
\nonumber
&\le \sum_{l=0}^{\infty} \pr\left\{\sup_{x\in \mathcal{A}_l } |W_{ij}(x,1)|\ge \lambda \left(\frac{\varepsilon}{2^{l+1}}\right)^{\eta}\right\}\\
\nonumber
&\le 2 \sum_{l=0}^{\infty} \pr\left\{|W_{ij}\left(\frac{\varepsilon}{2^l},1\right)|\ge \lambda \left(\frac{\varepsilon}{2^{l+1}}\right)^{\eta}\right\}\\
&\le 4 \sum_{l=0}^{\infty} \exp\left(-\frac{\lambda^2}{2^{1+2\eta}} \frac{2^{l(1-2\eta)}}{\varepsilon^{1-2\eta}}\right),
\end{align}
by Mill's inequality and the assumption that $\varepsilon<1$.
Notice that $W_{ij}(\frac{\varepsilon}{2^l},1)$ follows a normal distribution with zero mean and variance $R(\frac{\varepsilon}{2^l},1)$. The last inequality in \eqref{eq:IeqW} can be justified as follows,
\begin{align*}
\pr\left\{|W_{ij}\left(\frac{\varepsilon}{2^l},1\right)| \ge \lambda \left(\frac{\varepsilon}{2^{l+1}}\right)^{\eta}\right\}
&= \pr\left\{ \frac{|W_{ij}\left(\frac{\varepsilon}{2^l},1\right)|}{\sqrt{R(\frac{\varepsilon}{2^l},1)}} \ge \frac{ \lambda \left(\frac{\varepsilon}{2^{l+1}}\right)^{\eta}}{ \sqrt{R(\frac{\varepsilon}{2^l},1)}} \right\}\\
&\le  \pr\left\{ \frac{|W_{ij}\left(\frac{\varepsilon}{2^l},1\right)|}{\sqrt{R(\frac{\varepsilon}{2^l},1)}} \ge \frac{ \lambda \left(\frac{\varepsilon}{2^{l+1}}\right)^{\eta}}{ \sqrt{\frac{\varepsilon}{2^l}}} \right\}\\
&\le \frac{2}{\sqrt{2 \pi}} \frac{2^{\eta}}{\lambda} \left(\frac{\varepsilon}{2^l}\right)^{1/2-\eta} \exp\left(-\frac{\lambda^2}{2^{1+2\eta}} \frac{2^{l(1-2\eta)}}{\varepsilon^{1-2\eta}}\right)
\end{align*}
by Mill's inequality. For a fixed $\lambda>0$, taking $\varepsilon$ sufficiently small such that the constant $\frac{1}{\sqrt{2 \pi}} \frac{2^{\eta}}{\lambda} \left(\frac{\varepsilon}{2^l}\right)^{1/2-\eta}$ is less than one, this proves the last inequality in \eqref{eq:IeqW}.
\end{proof}

To establish the asymptotic normality of the proposed moment estimators, we require the following lemma, which provides a uniform convergence result for $\hat{\nu}_{n,I}(\bm{x}_I)$ defined in~\eqref{eq:Tailprocess}. A closely related conclusion is the weak convergence of the weighted tail copula process associated with the stable tail dependence function, as established in~\citet[Theorem 2.2]{EDL06}. Although we draw on some ideas from that theorem, our result differs by establishing weak convergence of the integral of the weighted tail empirical process with different weight functions, which require additional technical conditions in our lemma.

\begin{lemma}
\label{lemma:UniformConverge}
Suppose that \cref{Asp:SecondOrderCdt} holds and that the tail copula $R(\bm{x})$ of $\bm{X}$ satisfies, for $i,j\in V$ with $i\neq j$:
\begin{enumerate}
    \item[(\romannumeral1)] The partial derivative functions $\dot{R}_{ij}^i(x,y)$ and $\dot{R}_{ij}^{j}(x,y)$ exist and are continuous on $(0,\infty)^2$.
    \item[(\romannumeral2)] For $0<\eta<1/2$ and  $x\in(0,\exp(c)]$, the partial derivative satisfies
    \[
    \sup_{x\in [1/k,\exp(c)]} x^{-\eta} \Bigg| \dot{R}_{ij}^{j}(S_{ni}(x),\theta_{nj})- \dot{R}_{ij}^{j}(x,1) \Bigg|=o_{\pr}(1)
    \]
with $\theta_{nj}\in (S_{nj}(1)\wedge 1, \ S_{nj}(1)\vee 1)$ almost surely as $n\to\infty$, where $S_{n,I}(\bm{x}_{I})=(S_{ni}(x_{i}), i \in I)$ with
\[
S_{ni}(x):=\frac{n}{k} \hat{F}_{U,i}^{\leftarrow} \left(\frac{k x}{n}\right),
\]
and $\hat{F}_{U,i}^{\leftarrow}$ given by~\eqref{eq:InverseDF}.
\end{enumerate}
Then we have
\[
\int_{1/k}^{\exp(c)} \frac{\hat{\nu}_{n,ij}(x,1) \ell (-\log x)^{\ell-1}}{x} \d x \overset{\pr} \to \int_{0}^{\exp(c)} \frac{B_{ij}(x,1) \ell (-\log x)^{\ell-1} }{x} \d x
\]
as $n\to\infty$ for $\ell=1,2$, where $B_{ij}(x,1)$ is given in~\eqref{eq:ProcessBij}.
\end{lemma}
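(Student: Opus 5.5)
We decompose $\hat{\nu}_{n,ij}(x,1)$ in the standard way used for tail copula processes (cf.\ \citet[Theorem 2.2]{EDL06}). Since $\hat{R}_{n,ij}(x,1) = T_{n,ij}(S_{ni}(x),S_{nj}(1))$, we write
\begin{align*}
\hat{\nu}_{n,ij}(x,1)
&= \nu_{n,ij}\bigl(S_{ni}(x),S_{nj}(1)\bigr)
+ \sqrt{k}\bigl\{R_{n,ij}(S_{ni}(x),S_{nj}(1)) - R_{ij}(S_{ni}(x),S_{nj}(1))\bigr\} \\
&\quad + \sqrt{k}\bigl\{R_{ij}(S_{ni}(x),S_{nj}(1)) - R_{ij}(x,1)\bigr\}
=: D_{1n}(x) + D_{2n}(x) + D_{3n}(x).
\end{align*}
We then show separately that each weighted integral $\int_{1/k}^{e^c} D_{mn}(x)\,\ell(-\log x)^{\ell-1}x^{-1}\,\d x$ converges to the corresponding piece of $\int_0^{e^c} B_{ij}(x,1)\ell(-\log x)^{\ell-1}x^{-1}\,\d x$.

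\emph{Key device.} Every integrand is bounded by a sup-norm weighted by $x^{\eta}$, with $0<\eta<1/2$. Since
\[
\int_0^{e^c} x^{\eta-1}(-\log x)^{\ell-1}\,\d x<\infty ,
\]
it suffices to prove uniform convergence of $x^{-\eta}\times(\text{integrand})$ on $[1/k,e^c]$. The piece $\int_0^{1/k}$ of the limit is negligible because $\sup_{x\le\varepsilon}|W(x,1)|/x^{\eta}$ is small by \cref{lemma:inequalityadd}.

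\emph{Step 1 (the term $D_{2n}$).} By \cref{Asp:SecondOrderCdt}, $\sup|R_{n,ij}-R_{ij}| = O((k/n)^{\xi})$ on the relevant region. Hence $D_{2n}=O(\sqrt{k}(k/n)^{\xi})=o(1)$ under $k=o(n^{\xi/(\xi+1/2)})$. To get a bound with the weight $x^{-\eta}$, we additionally use $R_{n,ij}(x,y)\le x$ and $R_{ij}(x,y)\le x$ for very small $x$, splitting the integral at $x=\delta_n$. This bias handling near $x=0$ is routine but fiddly.

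\emph{Step 2 (the term $D_{1n}$).} We work in the Skorokhod space of \eqref{eq:uniformconverge}. There, \cref{lemma:WTP} gives
\[
\sup_{x}\frac{|\nu_{n,ij}(x,y)-W_{ij}(x,y)|}{(x\wedge y)^{\eta}}\to0 .
\]
Combined with the uniform weighted consistency $\sup_{1/k\le x\le e^c}|S_{ni}(x)-x|/x^{\eta'}\to0$, this yields $D_{1n}(x)=W_{ij}(x,1)+o_{\pr}(x^{\eta})$ uniformly. The consistency of $S_{ni}$ follows from the marginal part of \cref{lemma:WTP} via Vervaat's inversion, since $\sqrt{k}(S_{ni}(x)-x) = -\nu_{n,i}(x)+o_{\pr}(x^{\eta})$. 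We also use the continuity of $W$ together with \cref{lemma:inequalityadd} to control the modulus near zero.

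\emph{Step 3 (the term $D_{3n}$).} By the mean value theorem, using (i),
\[
D_{3n}(x) = \dot{R}^{i}_{ij}(\tilde\theta_{ni},1)\sqrt{k}\{S_{ni}(x)-x\} + \dot{R}^{j}_{ij}(S_{ni}(x),\theta_{nj})\sqrt{k}\{S_{nj}(1)-1\}.
\]
The inversion above gives $\sqrt{k}(S_{ni}(x)-x)\to -W_i(x)$ in weighted sup-norm, and $\sqrt{k}(S_{nj}(1)-1)\to -W_j(1)$. Condition (ii) allows us to replace $\dot{R}^{j}_{ij}(S_{ni}(x),\theta_{nj})$ by $\dot{R}^{j}_{ij}(x,1)$ with an error of order $o(x^{\eta})$. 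For the first derivative we use $0\le\dot{R}^i_{ij}\le1$ together with continuity from (i). Dominated convergence, with dominating function $Cx^{\eta-1}(-\log x)^{\ell-1}$, then gives the limit $-\int(\dot{R}^i_{ij}W_i+\dot{R}^j_{ij}W_j(1))\cdots$.

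Summing the three limits produces $B_{ij}$ as defined in \eqref{eq:ProcessBij}.

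\emph{Main obstacle.} The delicate point is the region $x$ near $1/k$, where $S_{ni}(x)$ is rough. Uniform weighted control there requires the integral tail bounds from \cref{lemma:inequalityadd} and the weighted version of \cref{lemma:WTP}. We would first try to truncate at a small fixed $\varepsilon$, prove convergence on $[\varepsilon,e^c]$, and show that the contribution from $[1/k,\varepsilon]$ is small in probability uniformly in $n$ as $\varepsilon\to0$.
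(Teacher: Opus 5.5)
Your decomposition $D_{1n}+D_{2n}+D_{3n}$ and your treatment of $D_{1n}$ and $D_{3n}$ are essentially the paper's. That treatment uses weighted sup-norms against the integrable weight $x^{\eta-1}|\log x|^{\ell-1}$, the Skorokhod version of \cref{lemma:WTP}, \cref{lemma:inequalityadd} near $0$, and the mean value theorem with condition (ii) for $\dot R^{j}_{ij}$ and $0\le\dot R^{i}_{ij}\le 1$ plus continuity for $\dot R^{i}_{ij}$.

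\textbf{The gap is in Step~1.} You have two ingredients: a second-order bound $\sup|R_{n,ij}-R_{ij}|\le Kq^{\xi}$, $q=k/n$, that is flat in $x$, and the trivial bound $|R_{n,ij}-R_{ij}|\le 2S_{ni}(x)\approx 2x$ at $(S_{ni}(x),S_{nj}(1))$. From these, no choice of $\delta_n$ does better than integrating the minimum of the two:
\[
\int_{1/k}^{e^{c}}\min\bigl\{2\sqrt{k}\,x,\;K\sqrt{k}\,q^{\xi}\bigr\}\,\frac{|\log x|^{\ell-1}}{x}\,\d x\;\asymp\;\sqrt{k}\,q^{\xi}\,\{\log(n/k)\}^{\ell},
\]
with the crossover at $x\asymp q^{\xi}$. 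In your weighted-sup formulation, the two requirements on $\delta_n$ are compatible only if $\sqrt{k}q^{\xi}=o(k^{-\eta/(2-2\eta)})$.

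The hypothesis $k=o(n^{\xi/(\xi+1/2)})$ only gives $\sqrt{k}q^{\xi}\to0$, possibly arbitrarily slowly (e.g.\ $k=n^{\xi/(\xi+1/2)}/\log\log n$). So the bias term is not controlled, and this is a real obstruction rather than routine bookkeeping.

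The missing idea is a second-order bound whose error itself vanishes as $x\wedge y\to0$. The paper gets it from homogeneity. With $r=x\wedge y$, write $q^{-1}C_{ij}(qx,qy)=r\,(qr)^{-1}C_{ij}(qr\cdot x/r,\,qr\cdot y/r)$ and $R_{ij}(x,y)=rR_{ij}(x/r,y/r)$. Applying the second-order bound at level $qr$ gives an error of order $r^{1+\xi}O(q^{\xi})$. Hence $\sup_{u\le T_0}\sqrt{k}\,|R_{n,ij}-R_{ij}|(u,S_{nj}(1))/(u\wedge S_{nj}(1))^{\eta}=O_{\pr}(\sqrt{k}q^{\xi})=o_{\pr}(1)$, with no logarithmic loss.

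Be aware that this rescaling evaluates the second-order bound at points whose smaller coordinate is $1$ and whose larger coordinate is unbounded. It therefore needs the condition uniformly on such points, not just the $(0,1]^{2}$ version the paper quotes. If you only have the latter, your split argument goes through under the slightly stronger rate $\sqrt{k}(k/n)^{\xi}\{\log(n/k)\}^{2}\to0$.

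\textbf{A smaller omission is in Step~2.} \cref{lemma:WTP} controls $\nu_{n,ij}-W_{ij}$ relative to $S_{ni}(x)^{\eta}$. Turning this into $o_{\pr}(x^{\eta})$ uniformly on $[1/k,e^{c}]$ requires $\sup_{x\ge1/k}S_{ni}(x)/x=O_{\pr}(1)$. Weighted consistency $|S_{ni}(x)-x|/x^{\eta'}\to0$ does not give this near $x=1/k$: there $kS_{ni}(1/k)$ equals $n$ times the second smallest of $U_{1i},\ldots,U_{ni}$, which has a non-degenerate Gamma limit. The paper uses the linear bound $\sup_{s\ge1/n}\hat F_{U,i}^{\leftarrow}(s)/s=O_{\pr}(1)$ of Shorack and Wellner. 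You need it too, including to justify $2S_{ni}(x)\approx 2x$ in the trivial bound above.
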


\begin{proof}
Set $c=-\log a$ with $a\in (0,1]$. Recall that
\[
\hat{\nu}_{n,I}(\bm{x}_I)=\sqrt{k} \left\{\hat{R}_{n,I}(\bm{x}_I)-R_{I}(\bm{x}_I)\right\}, \  I\subset V,
\]
with $\bm{x}_{I} \in [0,\infty]^{|I|}\setminus\{(\infty,\ldots,\infty)\}$.
By the definition of $\nu_{n,I}$ in~\eqref{eq:nu} and the equality that $\hat{R}_{n,I}(\bm{x}_I)=T_{n,I}(S_{n,I}(\bm{x}_I))$, we have
\begin{align*}
\hat{\nu}_{n,I}(\bm{x}_I)
&=\nu_{n,I}(S_{n,I}(\bm{x}_I))
+ \sqrt{k} \left\{R_{n,I}(S_{n,I}(\bm{x}_I))-R_{I}(S_{n,I}(\bm{x}_I))\right\}
\\
&\quad +\sqrt{k} \left\{R_{I}(S_{n,I}(\bm{x}_I))-R_{I}(\bm{x}_I)\right\}.
\end{align*}
Hence, for $i,j\in V$ and $i\neq j$,
\begin{align}
\label{eq:Dcp}
\nonumber
&\int_{1/k}^{1/a} \frac{\left\{\hat{\nu}_{n,ij}(x,1)-B_{ij}(x,1)\right\} \ell (-\log x)^{\ell-1} }{x} \d x \\
\nonumber
&\quad =\int_{1/k}^{1/a}  \frac{\left\{\nu_{n,ij}(S_{ni}(x),S_{nj}(1))-W_{ij}(x,1)\right\} \ell (-\log x)^{\ell-1} }{x} \d x \\
\nonumber
&\qquad + \int_{1/k}^{1/a}  \frac{\sqrt{k}\left\{R_{n,ij}(S_{ni}(x),S_{nj}(1))-R_{ij}(S_{ni}(x),S_{nj}(1))\right\} \ell (-\log x)^{\ell-1} }{x} \d x \\
\nonumber
&\qquad +  \int_{1/k}^{1/a} \frac{1}{x} \Bigg       [\sqrt{k}\left\{R_{ij}(S_{ni}(x),S_{nj}(1))-R_{ij}(x,1)\right\}\\
&\qquad \qquad \qquad  \quad +\left\{\dot{R}^{i}_{ij}(x,1)W_{i}(x)+\dot{R}^{j}_{ij}(x,1)W_{j}(1)\right\} \Bigg] \ell (-\log x)^{\ell-1}  \d x .
\end{align}
We will show, in turn, that each term on the right-hand side of~\eqref{eq:Dcp} converges to zero.

For the first part in~\eqref{eq:Dcp}, since for any $i\in V$, $S_{ni}(x)$ is a nondecreasing function of $x$ and $S_{ni}(1/a)\overset{\pr} \to 1/a$ as $n\to\infty$ (cf. Eq.~(3.10) in~\citet{EDL06}), there exists a constant $T_0$ such that, with high probability, $0<S_{ni}(x)\le T_0$ for all $x\in (0,1/a]$. Hence for $0\le \eta <1/2$,
\begin{align*}
    &\sup_{ 1/k \le x\le 1/a} \frac{\big|\nu_{n,ij}(S_{ni}(x), S_{nj}(1))-W_{ij}(x,1) \big|}{x^{\eta}}
    \\
    &\quad \le  \sup_{ 1/k \le x\le 1/a} \frac{\big|\nu_{n,ij}(S_{ni}(x), S_{nj}(1))-W_{ij}(S_{ni}(x),S_{nj}(1)) \big|}{x^{\eta}}
    \\
    &\qquad + \sup_{ 1/k \le x\le 1/a} \frac{\big|W_{ij}(S_{ni}(x),S_{nj}(1))-W_{ij}(x,1) \big|}{x^{\eta}}
    \\
    &\quad \le  \sup_{ 1/k \le x\le 1/a} \frac{\big|\nu_{n,ij}(S_{ni}(x), S_{nj}(1))-W_{ij}(S_{ni}(x),S_{nj}(1)) \big|}{\left(S_{ni}(x)\right)^{\eta}}
    \times \sup_{ 1/k \le x\le 1/a} \left(\frac{S_{ni}(x)}{x}\right)^{\eta}
    \\
    &\qquad + \sup_{ 1/k \le x\le 1/a} \frac{\big|W_{ij}(S_{ni}(x),S_{nj}(1))-W_{ij}(x,1) \big|}{x^{\eta}}
    \\
    & \quad \le  \sup_{ u\in (0,T_0]} \frac{\big|\nu_{n,ij}(u, S_{nj}(1))-W_{ij}(u,S_{nj}(1)) \big|}{u^{\eta}} \cdot \sup_{s\in (0,k/(na)]} \left(\frac{\hat{F}_{U,i}^{\leftarrow} (s)}{s}\right)^{\eta}
    \\
    &\qquad + \sup_{ 1/k \le x\le 1/a} \frac{\big|W_{ij}(S_{ni}(x),S_{nj}(1))-W_{ij}(x,1) \big|}{x^{\eta}}
    \\
    &\quad =: D_{n11}\cdot D_{n12} +D_{n13},
\end{align*}
where the last inequality holds with high probability. Since $S_{nj}(1)\overset{\pr} \to 1$ as $n\to\infty$, $T_0$ can be set sufficiently large such that with high probability, $S_{nj}(1)\le T_0$, and then
\begin{align*}
\sup_{ u\in (0,T_0]} \frac{\big|\nu_{n,ij}(u,  S_{nj}(1))-W_{ij}(u,S_{nj}(1) \big|}{u^{\eta}}
 \le  \sup_{ u\in (0,T_0]} \frac{\big|\nu_{n,ij}(u, S_{nj}(1))-W_{ij}(u,S_{nj}(1)) \big|}{\left(u \wedge S_{nj}(1)\right)^{\eta}},
\end{align*}
implying $D_{n11}\overset{\pr}\to 0$ as $n\to\infty$ by \cref{lemma:WTP}, \eqref{eq:uniformconverge}. With the fact that
\begin{equation}
\label{eq:ScaledQuantile}
\sup_{s\ge 1/n} \frac{\hat{F}_{U,i}^{\leftarrow} (s)}{s} =O_{\pr}(1),
\end{equation}
see Eq.(3.10) of~\citet{EDL06} or~\citet[Page 416]{SW1986}, we have $D_{n11} D_{n12}\overset{\pr}\to 0$ as $n\to\infty$.
Furthermore, for $1/k < \varepsilon <
1$, we know that
\begin{align*}
    D_{n13} &\le  \sup_{ \substack{x\in (0,1/a]
    \\ x\ge \varepsilon}} \frac{\big|W_{ij}(S_{ni}(x),S_{nj}(1))-W_{ij}(x,1) \big|}{x^{\eta}}
    \\
    & \quad + \sup_{ \substack{x\in (0,1/a]\\ 1/k\le x < \varepsilon}} \frac{\big|W_{ij}(S_{ni}(x),S_{nj}(1))-W_{ij}(x,1) \big|}{x^{\eta}}
    \\
    & \le  \sup_{ \substack{x\in (0,1/a]\\ x\ge \varepsilon}} \frac{\big|W_{ij}(S_{ni}(x),S_{nj}(1))-W_{ij}(x,1) \big|}{\varepsilon^{\eta}} + \sup_{ \substack{x\in (0,1/a]\\ 1/k\le x\le \varepsilon}} \frac{\big|W_{ij}(x,1)\big |}{x^{\eta}}
    \\
    & \quad + \sup_{ \substack{x\in (0,1/a]\\ 1/k\le x\le \varepsilon}} \frac{\big|W_{ij}(S_{ni}(x),S_{nj}(1)) \big |}{S_{ni}(x)^{\eta}} \sup_{s\in (0,k/(na)] } \left(\frac{\hat{F}_{U,i}^{\leftarrow} (s)}{s}\right)^{\eta}
    \\
    &=: D_{n14}+D_{n15}+D_{n16}.
\end{align*}
Since by Smirnov's lemma, see, e.g., \citet[Lemma~2.2.3]{S49},
\begin{align}
\label{eq:uniformCovQuantile}
\sup_{0<s<k/(na)} \frac{n}{k} \big| \hat{F}_{U,i}^{\leftarrow} (s)-s \big| \overset{a.s} \to 0
\end{align}
as $n\to\infty$, it follows from the uniform continuity of $W_{ij}$ that $D_{n14}\to 0$ almost surely for any $\varepsilon>0$. By~\cref{lemma:inequalityadd} and~\eqref{eq:ScaledQuantile}, for any $\delta>0$, we also have $\pr(D_{n15}>\delta)<\delta$ and $\pr(D_{n16}>\delta)<\delta$ for sufficiently large $n$ and small $\varepsilon$. Thus
\begin{align}
\label{eq:Part1}
\nonumber
& \Bigg| \int_{1/k}^{1/a} \frac{\left\{\nu_{n,ij}(S_{ni}(x),S_{nj}(1))-W_{ij}(x,1)\right\} \ell (-\log x)^{\ell-1}}{x} \d x   \Bigg|
\\
& \quad \le \sup_{ 1/k \le x\le 1/a } \frac{\big|\nu_{n,ij}(S_{ni}(x), S_{nj}(1))-W_{ij}(x,1) \big|}{x^{\eta}}  \times\Bigg| \int_{1/k}^{1/a}  x^{{\eta}-1 } \cdot \ell (-\log x)^{\ell-1} \d x   \Bigg|
 \overset{\pr}\to 0
\end{align}
as $n\to\infty$ for $\ell=1,2$.

Now we consider the second part in~\eqref{eq:Dcp}. \Cref{Asp:SecondOrderCdt} implies that
\begin{align*}
   \sup_{\bm{x}\in (0,1]^{|I|}} \big|\frac{1}{q} C_{I}(q\bm{x})-R_{I}(\bm{x}) \big|\le K_{R} q^{\xi},
\end{align*}
for details see Eq.~(S.18) in \citet{EV20}. Let $r=x\wedge y$, then we have, for any $i,j\in V$ and $x,y>0$, that
\begin{align}
\label{eq:ConvergeRate}
\nonumber
    &\frac{1}{q} C_{ij}(q x,q y)=\frac{r}{q r} C_{ij}\left(q r \frac{x}{r}, q r \frac{y}{r}\right)\\
    &\quad = r\left\{ R_{ij}\left(x/r,y/r\right) +O((q r)^{\xi}) \right\} = R_{ij}\left(x,y\right)+ r^{1+\xi} O(q^{\xi})
\end{align}
as $q\to 0$. Moreover, noting that by~\eqref{eq:ConvergeRate}, \cref{Asp:SecondOrderCdt}, the relation $k=o\left(n^{\xi/(\xi+\frac{1}{2})}\right)$ and $1+\xi-\eta >0$, we have
\begin{align*}
    &\sup_{u\in (0,T_0]} \frac{\big|\sqrt{k} \left\{R_{n,ij}(u,S_{nj}(1))- R_{ij}(u,S_{nj}(1))\right\} \big|}{\left[u \wedge S_{nj}(1)\right]^{\eta}}\\
    &\quad = \sup_{u\in (0,T_0]}  \left\{u \wedge S_{nj}(1)\right\}^{1+\xi-\eta} O_{\pr}\left(k^{\xi+1/2} /n^{\xi}\right) =o_{\pr}(1).
\end{align*}
With the fact that $D_{n12}=\sup_{s\in (0,k/(nc)] } \left(\hat{F}_{U,i}^{\leftarrow} (s)/s \right)^{\eta}\overset{\pr}\to 0$, we obtain that, with arbitrarily high probability,
\begin{align*}
   & \sup_{ 1/k \le x\le 1/a } \Bigg|\frac{\sqrt{k} \left\{R_{n,ij}(S_{ni}(x),S_{nj}(1))-R_{ij}(S_{ni}(x),S_{nj}(1))\right\}}{x^{\eta}} \Bigg| \\
   & \quad  \le  \sup_{ 1/k \le x\le 1/a } \frac{\big|\sqrt{k} \left\{R_{n,ij}\left(S_{ni}(x),S_{nj}(1)\right)- R_{ij}\left(S_{ni}(x),S_{nj}(1)\right)\right\} \big|}{S_{ni}(x)^{\eta}} \cdot \sup_{ 1/k \le x\le 1/a } \left(\frac{S_{ni}(x) }{x}\right)^{\eta}\\
    & \quad \le \sup_{u\in (0,T_0]} \frac{\big|\sqrt{k} \left\{R_{n,ij}(u,S_{nj}(1))- R_{ij}(u,S_{nj}(1))\right\} \big|}{\left\{u \wedge S_{nj}(1)\right\}^{\eta}}
    \cdot \sup_{s\in (0,k/(na)] } \left(\frac{\hat{F}_{U,i}^{\leftarrow} (s)}{s}\right)^{\eta}\\
    & \quad \overset{\pr} \to 0
\end{align*}
as $n\to\infty$. It further follows that for $\ell=1,2$,
\begin{align}
\label{eq:Part2}
\nonumber
&\Bigg| \int_{1/k}^{1/a} \frac{\sqrt{k}\left\{R_{n,ij}(S_{ni}(x),S_{nj}(1))-R_{ij}(S_{ni}(x),S_{nj}(1))\right\} \ell (-\log x)^{\ell-1} }{x} \d x  \Bigg|\\
\nonumber
&\quad \le  \sup_{ \substack{x\in (0,1/a]\\ x\ge 1/k}} \Bigg|\frac{\sqrt{k}\left\{R_{n,ij}(S_{ni}(x),S_{nj}(1))-R_{ij}(S_{ni}(x),S_{nj}(1))\right\}}{x^{\eta}} \Bigg| \\
\nonumber
&\qquad \cdot \Bigg| \int_{1/k}^{1/a} x^{\eta-1}
   \cdot \ell (-\log x)^{\ell-1} \d x \Bigg| \\
& \quad \overset{\pr} \to 0, \qquad n \to \infty.
\end{align}

For the third part on the right-hand side of~\eqref{eq:Dcp}, let $(0,T_1)\times (0,T_2)\supset (0,1/a]^2$ be the set such that the partial derivatives of $R_{ij}(x,y)$ exist and are continuous by assumption. Since $S_{ni}(x)$ converges $x$ uniformly on $(0,1/a]$ and $R_{ij}(x,y)$ is continuous on $[0,T_1]\times [0,T_2]$, for large $n$ such that $(S_{ni}(x),S_{nj}(1))\in [0,T_1]\times [0,T_2]$, applying the mean value theorem gives
\begin{align*}
    &R_{ij}\left(S_{ni}(x),S_{nj}(1)\right)-R_{ij}(x,1)\\
    &\quad =\left\{R_{ij}\left(S_{ni}(x),S_{nj}(1)\right)-R_{ij}\left(S_{ni}(x),1\right)\right\} +\left\{R_{ij}(S_{ni}(x),1)-R_{ij}(x,1)\right\}\\
    &\quad =\dot{R}_{ij}^{i}(\theta_{ni},1) \left(S_{ni}(x)-x\right) + \dot{R}_{ij}^{j}(S_{ni}(x),\theta_{nj}) \left(S_{nj}(1)-1\right),
\end{align*}
where $\theta_{ni}\in (S_{ni}(x)\wedge x, \ S_{ni}(x)\vee x)$ and $\theta_{nj}\in (S_{nj}(1)\wedge 1, \ S_{nj}(1)\vee 1)$ almost surely for large $n$. Thus,
\begin{align}
\label{eq:Part3}
\nonumber
& \int_{1/k}^{1/a} \frac{1}{x} \Bigg[\sqrt{k}\left\{R_{ij}(S_{ni}(x),S_{nj}(1))-R_{ij}(x,1)\right\}\\
\nonumber
&\qquad \qquad  +\left\{\dot{R}^{i}_{ij}(x,1)W_{i}(x)+\dot{R}^{j}_{ij}(x,1)W_{j}(1)\right\} \Bigg] \ell (-\log x)^{\ell-1}  \d x \\
\nonumber
& \quad \le  \Bigg|  \int_{1/k}^{1/a} \frac{ \left[\dot{R}_{ij}^{j}(S_{ni}(x),\theta_{nj}) \sqrt{k} \left\{S_{nj}(1)-1\right\}+\dot{R}_{ij}^{j}(x,1)W_{j}(1) \right] \ell (-\log x)^{\ell-1}  }{x } \d x \Bigg|\\
\nonumber
    & \qquad +  \Bigg| \int_{1/k}^{1/a}  \frac{ \left[\dot{R}_{ij}^{i}(\theta_{ni},1) \sqrt{k} \left\{S_{ni}(x)-x\right\}+\dot{R}_{ij}^{i}(x,1)W_{i}(x) \right ] \ell (-\log x)^{\ell-1}  }{x } \d x  \Bigg| \\
&\quad=: E_{n1} + E_{n2}.
\end{align}

We begin by analyzing the term $E_{n1}$. Observe that
\begin{align}
\label{eq:En1}
\nonumber
|E_{n1}|&\le \Bigg| \int_{1/k}^{1/a} \frac{\dot{R}_{ij}^{j}(S_{ni}(x),\theta_{nj})  \left[\sqrt{k} \{S_{nj}(1)-1\}+W_{j}(1) \right] \ell (-\log x)^{\ell-1} }{x } \d x  \Bigg|
\\
\nonumber
&\quad + \Bigg| \int_{1/k}^{1/a}  \frac{\left\{\dot{R}_{ij}^{j}(S_{ni}(x),\theta_{nj})- \dot{R}_{ij}^{j}(x,1) \right\} W_{j}(1) \ell (-\log x)^{\ell-1}}{x } \d x \Bigg|
\\
\nonumber
&\le \left\{\sqrt{k} \{S_{nj}(1)-1\}+W_{j}(1) \right\} \cdot \Bigg| \int_{1/k}^{1/a} \frac{\dot{R}_{ij}^{j}(S_{ni}(x),\theta_{nj})  \ell (-\log x)^{\ell-1} }{x } \d x  \Bigg|
\\
&\quad + \sup_{x\in [1/k,1/a]} x^{-\eta} \Bigg| \dot{R}_{ij}^{j}(S_{ni}(x),\theta_{nj})- \dot{R}_{ij}^{j}(x,1) \Bigg| W_{j}(1) \cdot \Bigg| \int_{1/k}^{1/a} x^{\eta-1} \cdot \ell (-\log x)^{\ell-1} \d x  \Bigg|,
\end{align}
where the second term on the right hand side of the above inequality goes to zero in probability by condition $(\romannumeral2)$ of the assumption. Setting $\eta=0$ in~\eqref{eq:uniformconverge}, one have
\[
\sqrt{k}\left[S_{nj}(1)-1\right]+W_{j}(1)\overset{\pr}\to 0.
\]
Moreover, by the fact that
\begin{equation}
\label{eq:PartialDevIeq}
0\le \dot{R}_{ij}^{j}(x,1) \le R_{ij}(x,1) \le x,
\end{equation}
we have
\begin{align*}
& \Bigg| \int_{1/k}^{1/a} \frac{\dot{R}_{ij}^{j}(S_{ni}(x),\theta_{nj}) \ell (-\log x)^{\ell-1} }{x } \d x  \Bigg|\\
&\le \int_{0}^{1/a} \frac{|\dot{R}_{ij}^{j}(S_{ni}(x),\theta_{nj})|  \ell (-\log x)^{\ell-1}}{x } \d x
\\
&\le \int_{0}^{1/a} \frac{|S_{ni}(x)|}{x } \ell (-\log x)^{\ell-1} \d x\\
& \le \sup_{1/k\le x \le 1/a} \frac{|S_{ni}(x)|}{x } \times \int_{0}^{1/a} \ell (-\log x)^{\ell-1} \d x < \infty
\end{align*}
by~\eqref{eq:uniformCovQuantile}. Hence the first term in the right-hand of~\eqref{eq:En1} converges to zero as $n\to\infty$.
Note that the tail copula $R(\bm{x})$ is nondecreasing in each coordinate, its partial derivatives (whenever they exist) are nonnegative. Hence, the inequality~\eqref{eq:PartialDevIeq} can be validated by differentiating $R(\lambda \bm{x})=\lambda R(\bm{x})$ with respect to $\lambda$ and evaluating at $\lambda=1$, which yields
\begin{equation}
\label{eq:PartialEuation}
\sum_{j=1}^d x_j \dot{R}^{j}(\bm{x})=R(\bm{x}),
\end{equation}
implying $x_j \dot{R}^{j}(\bm{x}) \le R(\bm{x}) \le \min_{i\in V} x_i$. In particular, this leads to the inequality~\eqref{eq:PartialDevIeq}. For more details, see Lamma~5 and Lemma~A.3 in~\citet{ES2021}.

For the term $E_{n2}$, note that
\begin{align*}
    \lefteqn{|E_{n2}|} \\
    &\le \Bigg| \int_{1/k}^{1/a} \frac{\dot{R}_{ij}^{i}(\theta_{ni},1)  \left[\sqrt{k} \{S_{ni}(x)-x\} + W_{i}(x) \right] \ell (-\log x)^{\ell-1} }{x } \d x  \Bigg|
\\
&\quad + \Bigg| \int_{1/k}^{1/a}  \frac{\left\{\dot{R}_{ij}^{i}(\theta_{ni},1)- \dot{R}_{ij}^{i}(x,1) \right\} W_{i}(x) \ell (-\log x)^{\ell-1} }{x } \d x \Bigg|
\\
&\le \sup_{1/k\le x\le 1/a} \Bigg|\dot{R}_{ij}^{i}(\theta_{ni},1) \frac{ \sqrt{k} \{S_{ni}(x)-x\}-W_{i}(x) }{x^{\eta} } \Bigg| \cdot \Bigg| \int_{1/k}^{1/a} x^{\eta-1} \cdot \ell (-\log x)^{\ell-1}\d x  \Bigg|
\\
&\quad + \sup_{x\in [1/k,1/a]}  \Bigg| \dot{R}_{ij}^{i}(\theta_{ni},1)- \dot{R}_{ij}^{i}(x,1) \Bigg| \cdot \sup_{x\in [1/k,1/a]} \frac{|W_{i}(x)|}{x^{\eta}}   \cdot \Bigg| \int_{1/k}^{1/a} x^{\eta-1} \cdot \ell (-\log x)^{\ell-1} \d x  \Bigg|.
\end{align*}
The first part on the right-hand side goes to zero as $n\to \infty$ from~\eqref{eq:uniformconverge} and the inequality $0 \le \dot{R}_{ij}^{i}(\theta_{ni},1)\le 1$, where the latter is implied by $\theta_{ni} \dot{R}_{ij}^{i}(\theta_{ni},1) \le \dot{R}_{ij}^{i}(\theta_{ni},1) \le  \theta_{ni}$ from~\eqref{eq:PartialEuation}. By assumption $\dot{R}_{ij}^i(x,1)$ is a continuous function, and hence uniformly continuous on the interval $x \in [1/k,1/a]$. Recall that $\theta_{ni}$ lies between $x$ and $S_{ni}(x)$. From~\eqref{eq:uniformCovQuantile} we know that $S_{ni}(x)$ converges uniformly to $x$. As a consequence, $\theta_{ni}$ also converges to its limiting value. Hence we have
\[
\sup_{x\in [1/k,1/a]}\Bigg|\dot{R}_{ij}^{i}(\theta_{ni},1)-\dot{R}_{ij}^{i}(x,1) \Bigg| \to 0
\]
as $n\to\infty$. Recall that $W_i(x)$ is a normal distributed random variable with zero mean and variance $x$. For any $M>0$,
\begin{align}
\label{eq:BoundedMarginalProcess}
\nonumber
&\pr\left(\bigg| \sup_{x\in(0,1/a]}\frac{W_i(x)}{x^{\eta}}\bigg|>M\right)
=\pr\left(|W_i(1)| \bigg| \sup_{x\in(0,1/a]}x^{1/2-\eta}\bigg|>M\right)
\\
&\quad =2 \pr\left[W_{i}(1)>M a^{\eta-1/2}\right] \le \frac{2\E(W_{i}(1)^2)}{M^2 a^{2\eta-1}} \to 0
\end{align}
as $M\to \infty$ by Chebyshev inequality. Hence  $E_{n2}\overset{\pr}\to 0$ as $n\to \infty$. Consequently, by~\eqref{eq:Dcp}, \eqref{eq:Part1}, \eqref{eq:Part2} and \eqref{eq:Part3}, we obtain
\[
\int_{1/k}^{1/a} \frac{\left\{\hat{\nu}_{n,ij}(x,1)-B_{ij}(x,1) \right\} \ell (-\log x)^{\ell-1} }{x } \d x \overset{\pr}\to 0
\]
as $n\to\infty$.
Recall the definition of $B_{ij}(x,1)$ in~\eqref{eq:ProcessBij}.
By~\eqref{eq:PartialDevIeq}, we have
\begin{align*}
\int_{0}^{1/k} \frac{B_{ij}(x,1) \ell (-\log x)^{\ell-1} }{x} \d x \to 0,
\end{align*}
in mean square as $n\to\infty$, implying the desired result.
\end{proof}

Recall the definition of $S_{n,I}(\bm{x}_I)$ in~\eqref{eq:InverseDF}. The following lemma verifies condition~$(\romannumeral2)$ of~\cref{lemma:UniformConverge} for the \HR{} MGPD.

\begin{lemma}
\label{lemma:ConRatePartialDer}
Let $R(\bm{x})$ be the tail copula function of a \HR{} MGPD with variogram matrix $\bm{\Gamma}=(\gamma_{ij})_{i,j\in V}$ satisfying \cref{Asp:PositiveVariogram}. For each $j
\in V$, assume $\{\theta_{nj}\}_{n\ge 1}$ is a sequence lying between $1$ and $S_{nj}(1)$ almost surely for large $n$. Then, for $i,j\in V$ and $ i\neq j$, we have
\[
\sup_{x \in [1/k,\exp(c)]} x^{-\eta} \Bigg| \dot{R}_{ij}^{j}(S_{ni}(x),\theta_{nj})-\dot{R}_{ij}^{j}(x,1)\Bigg|\to 0
\]
in probability as $n\to\infty$.
\end{lemma}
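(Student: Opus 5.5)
The plan is to make the \HR{} partial derivative explicit and reduce the statement to a one-dimensional estimate on a smooth function. Write $\lambda=\sqrt{\gamma_{ij}}>0$, which is possible by \cref{Asp:PositiveVariogram}. The bivariate \HR{} stable tail dependence function is
\[
\ell_{ij}(x,y)=x\,\Phi\!\left(\tfrac{\lambda}{2}+\tfrac{\log(x/y)}{\lambda}\right)+y\,\Phi\!\left(\tfrac{\lambda}{2}+\tfrac{\log(y/x)}{\lambda}\right).
\]
The tail copula is then $R_{ij}=x+y-\ell_{ij}$. By the standard cancellation identity $x\phi(\lambda/2+\log(x/y)/\lambda)=y\phi(\lambda/2+\log(y/x)/\lambda)$, one gets $\partial \ell_{ij}/\partial y=\Phi(\lambda/2+\log(y/x)/\lambda)$. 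Hence
\[
\dot R_{ij}^{j}(x,y)=g\big(\log(x/y)\big),\qquad g(u):=\Phi\!\left(\tfrac{u}{\lambda}-\tfrac{\lambda}{2}\right).
\]
The function $g$ is increasing and Lipschitz with constant $1/(\lambda\sqrt{2\pi})$. Moreover, $g(u)\to0$ as $u\to-\infty$ like a Gaussian tail, so $\sup_{x\le\varepsilon}x^{-\eta}g(\log x+C)\to0$ as $\varepsilon\to0$ for every fixed $C$. This super-polynomial decay is what will compensate for the weight $x^{-\eta}$.

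First, for a small $\varepsilon\in(0,1)$, split the supremum into $x\in[\varepsilon,e^c]$ and $x\in[1/k,\varepsilon)$. On $[\varepsilon,e^c]$ the weight is bounded by $\varepsilon^{-\eta}$, and the Lipschitz property gives
\[
\big|g(\log S_{ni}(x)-\log\theta_{nj})-g(\log x)\big|\le \frac{1}{\lambda\sqrt{2\pi}}\Big(\big|\log (S_{ni}(x)/x)\big|+|\log\theta_{nj}|\Big).
\]
By Smirnov's lemma \eqref{eq:uniformCovQuantile}, $\sup_{x\le e^c}|S_{ni}(x)-x|\to0$ almost surely. Since $x\ge\varepsilon$ is bounded away from zero, this yields $\sup_{x\in[\varepsilon,e^c]}|\log(S_{ni}(x)/x)|\to0$. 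Also $S_{nj}(1)\to1$ in probability, hence $\theta_{nj}\to1$, and this piece tends to zero in probability for each fixed $\varepsilon$.

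Second, on $[1/k,\varepsilon)$ I bound the two terms separately by the triangle inequality. The deterministic term $x^{-\eta}g(\log x)$ is at most $\sup_{x\le\varepsilon}x^{-\eta}\Phi(\log x/\lambda-\lambda/2)$, which is small for small $\varepsilon$. For the random term, use \eqref{eq:ScaledQuantile}: given $\delta>0$ there is $M$ such that, with probability at least $1-\delta$, $S_{ni}(x)\le Mx$ for all $x\ge1/k$, and also $\theta_{nj}\ge1/2$. Note that $S_{ni}(x)>0$ for $x\ge1/k$ because $kx/n\ge1/n$. Monotonicity of $g$ then gives
\[
x^{-\eta}g\big(\log S_{ni}(x)-\log\theta_{nj}\big)\le x^{-\eta}g\big(\log x+\log(2M)\big),
\]
whose supremum over $x\le\varepsilon$ tends to zero as $\varepsilon\to0$ by the Gaussian decay. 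Choosing $\varepsilon$ small after $M$ is fixed and then letting $n\to\infty$ gives the claim.

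The main obstacle is the region of small $x$, down to $1/k$, where the ratio $S_{ni}(x)/x$ is only stochastically bounded and not close to $1$, so a Lipschitz argument fails. The plan handles this by using only a one-sided bound on $S_{ni}(x)/x$ together with monotonicity and the rapid decay of $\dot R^{j}_{ij}(x,1)$ as $x\to0$. This decay is specific to the \HR{} model with $\gamma_{ij}>0$, which is why \cref{Asp:PositiveVariogram} is needed. The one step to verify carefully is the closed form of $\dot R_{ij}^j$, in particular the cancellation identity used in differentiating $\ell_{ij}$.
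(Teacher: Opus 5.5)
Your proof is correct, and your closed form $\dot R^{j}_{ij}(x,y)=\Phi\bigl(\log(x/y)/\sqrt{\gamma_{ij}}-\sqrt{\gamma_{ij}}/2\bigr)$ agrees with \eqref{eq:firstPartialDerivative}. You reach the conclusion by a different route from the paper, however.

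The paper does not split $[1/k,e^{c}]$. It makes one mean-value expansion of $\Phi$ in the ratio $S_{ni}(x)/S_{nj}(1)$ and bounds the resulting derivative factor $x^{-1}\phi(\cdot)$ uniformly on $[1/k,e^{c}]$; this is where the super-polynomial decay enters. It then proves $\sup_{x\in[1/k,e^{c}]}x^{-\eta}\lvert S_{ni}(x)/S_{nj}(1)-x\rvert\to0$ by approximating $\sqrt{k}\{S_{ni}(x)-x\}$ by the Gaussian process $W_i$ in the weighted sup-norm, via \eqref{eq:uniformconverge}, and by controlling $\sup_x\lvert W_i(x)\rvert/x^{\eta}$ via \eqref{eq:BoundedMarginalProcess}. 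That yields an $O_{\pr}(k^{-1/2})$ rate. The price is reliance on the weighted approximation of the tail quantile process, and hence on $\eta<1/2$.

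Your argument needs much less. You use only the uniform consistency \eqref{eq:uniformCovQuantile} on $[\varepsilon,e^{c}]$ and the linear bound \eqref{eq:ScaledQuantile} on $[1/k,\varepsilon)$. You combine these with the monotonicity of $\dot R^{j}_{ij}$ in its first argument and its decay faster than any power of $x$. It gives no rate, which the lemma does not ask for. In exchange, it is valid for every $\eta\ge0$, and it handles $\theta_{nj}$ directly rather than replacing it by $S_{nj}(1)$, as the paper's display does.

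It also avoids a soft spot in the paper's version. The factor $(1+o(1))$ in their Taylor step is not uniform near $x=1/k$, where $S_{ni}(x)/x$ is only stochastically bounded and does not tend to $1$. Their step therefore needs the extra observation that $u\mapsto u^{-1}\phi(\cdot)$ is bounded on all of $(0,\infty)$. Your monotonicity bound on $[1/k,\varepsilon)$ never requires that ratio to be close to $1$.
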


\begin{proof}
For $i,j\in V$, the bivariate tail copula associated with the $(i,j)$ component of the H\"{u}sler--Reiss distribution is a function of $\gamma_{ij}$ only. For notational simplicity, we write
\[
R_{ij}(x,y)=R(x,y;\gamma_{ij}), \ i,j \in V
\]
with
\begin{equation}
\label{eq:bivariateTailCopulaHR}
    R(x,y;\gamma)
    = x + y
    - x \, \Phi\!\left(
        \frac{\sqrt{\gamma}}{2}
        + \frac{\log{x}-\log{y}}{\sqrt{\gamma}}
    \right)
    - y \, \Phi\!\left(
        \frac{\sqrt{\gamma}}{2}
        + \frac{\log{y}-\log{x}}{\sqrt{\gamma}}
    \right)
\end{equation}
for $(x,y)\in [0,\infty)^2$.
Taking derivative with respect to $x$ and $y$ respectively, we get
\begin{equation}
\label{eq:firstPartialDerivative}
    \dot{R}_{ij}^{i}(x,y)
    = \dot{R}_{ij}^{j}(y,x)
    = 1
    - \Phi\!\left(
        \frac{\sqrt{\gamma_{ij}}}{2}
        + \frac{\log{x}-\log{y}}{\sqrt{\gamma_{ij}}}
    \right).
\end{equation}
Therefore,
\begin{align*}
&\sup_{x \in [1/k,1/a]} x^{-\eta} \Bigg| \dot{R}_{ij}^{j}(S_{ni}(x),S_{nj}(1))-\dot{R}_{ij}^{j}(x,1) \Bigg|
\\
&\qquad \le \sup_{x \in [1/k,1/a]} x^{-\eta} \Bigg|  \Phi\left(\sqrt{\gamma_{ij}}-\frac{\log{S_{ni}(x)}-\log{S_{nj}(1)}}{2\sqrt{\gamma_{ij}}}\right) - \Phi\left(\sqrt{\gamma_{ij}}-\frac{\log{x}}{2\sqrt{\gamma_{ij}}}\right)\Bigg|
\\
&\qquad =: h_n(x).
\end{align*}

Recall that $c=-\log a$. By~\eqref{eq:uniformCovQuantile}, we have
\[
\frac{S_{ni}(x)}{S_{nj}(1)}\to x
\]
almost surely as $n\to\infty$. Applying Taylor expansion to the standard normal distribution function at $\sqrt{\gamma_{ij}}-\log{x}/(2\sqrt{\gamma_{ij}})$ gives
\begin{align*}
h_n(x)&=\sup_{x\in[1/k,1/a]} x^{-\eta} \left|-\frac{1}{2\sqrt{\gamma_{ij}}} \frac{1}{x} \phi\left(\sqrt{\gamma_{ij}}-\frac{\log{x}}{2\sqrt{\gamma_{ij}}}\right) \left(\frac{S_{ni}(x)}{S_{nj}(1)}-x\right)\right| (1+o(1))
\\
&\le \frac{1}{\sqrt{\gamma_{ij}}} \sup_{x\in[1/k,1/a]}    \frac{1}{x} \phi\left(\sqrt{\gamma_{ij}}-\frac{\log{x}}{2\sqrt{\gamma_{ij}}}\right) \cdot \sup_{x\in [1/k,1/a]} x^{-\eta} \Bigg| \frac{S_{ni}(x)}{S_{nj}(1)}-x\Bigg| (1+o(1))
\end{align*}
for sufficiently large $n$. Since
\[
\lim_{n\to\infty} k\phi\left(\sqrt{\gamma_{ij}}+\frac{\log{k}}{2\sqrt{\gamma_{ij}}}\right)=0.
\]
and it is continuous on $[1/k,1/a]$, we have
\[ \sup_{x\in[1/k,1/a]} \frac{1}{x} \phi\left(\sqrt{\gamma_{ij}}-\frac{\log{x}}{2\sqrt{\gamma_{ij}}}\right) < \infty
\]
Moreover, note that
\begin{align*}
    \lefteqn{\sup_{x\in [1/k,1/a]} x^{-\eta} \Bigg| \frac{S_{ni}(x)}{S_{nj}(1)}-x\Bigg|} \\
    &=\frac{1}{|S_{nj}(1)|} \Biggl[\frac{1}{\sqrt{k}} \sup_{x\in [1/k,1/a]} \left\{\Bigg| \frac{\sqrt{k} (S_{ni}(x)-x)-W_i(x)}{x^{\eta}} \Bigg|+  \frac{|W_i(x)|}{x^{\eta}}\right\}  \\
    &\hspace{30mm} \mbox{} + \sup_{x\in [1/k,1/a]} x^{1-\eta} |S_{nj}(1)-1| \Biggr]
    \\
    &\le \frac{1}{|S_{nj}(1)|} \Biggl[\frac{1}{\sqrt{k}} \left\{ \sup_{x\in [1/k,1/a]} \Bigg| \frac{\sqrt{k} (S_{ni}(x)-x)-W_i(x)}{x^{\eta}} \Bigg| + \sup_{x\in [1/k,1/a]} \frac{|W_i(x)|}{x^{\eta}}\right\} \\
    &\hspace{30mm} \mbox{} +  a^{\eta-1} |S_{nj}(1)-1| \Biggr]  \\
    &\to 0
\end{align*}
in probability by~\eqref{eq:uniformconverge}, \eqref{eq:BoundedMarginalProcess} and the fact that $S_{nj}(1)\to 1$ almost surely, as $n\to \infty$. This implies $h_n(x)\to 0$ in probability and the proof is complete.
\end{proof}

\subsection{Proofs of propositions and theorems}
\label{sec:A.2}

In this section, we give proofs of the lemma, propositions and theorems in~\cref{sec:Results}.

\begin{proof}[{\bf Proof of~\cref{lemma:monotonicity}}]
We first derive the expressions of the moment functions. Recall that for any $i,j\in V$ and $i\neq j$,
\[
\left(Y_i^{(j)}, \, Y_j^{(j)}\right) \overset{d}= \left( Y_j^{(i)}, \, Y_i^{(i)}\right)
\overset{d}
=
\left(E+Z_i^{(j)}, \,  E\right),
\]
where $Z_i^{(j)}$ is a normally distributed random variable with mean $-\gamma_{ij}/2$ and variance $\gamma_{ij}$, independent of the standard exponential random variable $E$.
By the independence of $E$ and $Z_i^{(j)}$, a direct calculation gives
\begin{align}
\label{eq:FisrtPos}
\nonumber
\E[(E+x)_{+}]&=\int_{0}^{\infty} (y+x)_{+} \exp(-y) \d y = \int_{(-x)\vee 0}^{\infty} (y+x) \exp(-y) \d y \\
&= \exp(x) \I\{x<0\} + (1+x) \I\{x\ge 0\},
\end{align}
and then
\begin{align*}
&e^{(1)}(\gamma_{ij},c)
= \E \left\{ \rbr{E + Z_i^{(j)} +c}_+ \right\}  \\
    &\quad =
    \int_{-\infty}^\infty \E \left\{ (E + x)_+ \right\} \, \d \pr[Z_i^{(j)} +c \le x]  \\
    &\quad =
    \int_{-\infty}^0 \e^x \, \d \pr\left\{Z_i^{(j)} + c  \le x\right\} + \int_0^\infty (1+x) \, \d \pr\left\{Z_i^{(j)}  + c \le x\right\} \\
    &\quad = \exp(c) \Phi\left(- \frac{ c+ \gamma_{ij}/2}{\sqrt{\gamma_{ij}}}\right)
    +  \sqrt{\gamma_{ij}} \phi\left(\frac{c - \gamma_{ij}/2}{\sqrt{\gamma_{ij}}}\right)
     + \left(1-\frac{\gamma_{ij}}{2} + c\right) \Phi\left(\frac{c - \gamma_{ij}/2}{\sqrt{\gamma_{ij}}}\right).
\end{align*}

Similarly to~\eqref{eq:FisrtPos}, we have
\begin{align*}
\E\left[\left\{(E+x)_{+}\right\}^2\right]&=\int_{0}^{\infty} \left\{(y+x)_{+}\right\}^2 \exp(-y) \d y = \int_{(-x)\vee 0}^{\infty} (y+x)^2 \exp(-y) \d y \\
&= 2\exp(x) \I(x<0) + (2+2x+x^2) \I(x\ge 0).
\end{align*}
It follows that
\begin{align*}
&e^{(2)}(\gamma_{ij},c)=\E \left[ \left\{\rbr{E + Z_i^{(j)} +c }_+\right\}^2 \right] \\
&\quad =
    \int_{-\infty}^\infty \E\left[\{(E+x)_{+}\}^2\right] \, \d \pr \left\{Z_i^{(j)} + c \le x\right\}  \\
    &\quad =
   2 \int_{-\infty}^0 \exp(x) \, \d \pr\left\{Z_i^{(j)} + c \le x\right\} + \int_0^\infty (2+2x+x^2) \, \d \pr\left\{Z_i^{(j)} + c \le x\right\}\\
    &\quad =2 e_{ij}^{(1)}   +  \int_{0}^{\infty} x^2 \, \d \pr\left\{Z_i^{(j)} + c \le x\right\}\\
    &\quad = 2 e_{ij}^{(1)}  -\sqrt{\gamma_{ij}} \left(\frac{\gamma_{ij}}{2} - c\right) \phi\left(\frac{ c - \gamma_{ij}/2}{\sqrt{\gamma_{ij}}}\right)    + \left\{\frac{1}{4} \gamma_{ij}^2 +(1 - c) \gamma_{ij} + c^2\right\} \Phi\left(\frac{ c - \gamma_{ij}/2}{\sqrt{\gamma_{ij}}}\right).
    \end{align*}
Plugging in the expression of $e^{(1)}(\gamma_{ij},c)$ yields the formula of $e^{(2)}(\gamma_{ij},c)$.

We now show the monotonicity of the moment functions.
For given $c$, the functions $e^{(\ell)}(\gamma)=e^{(\ell)}(\gamma, c)$ with $\ell=1,2$ that map $[0,\infty)$ to $\mathbb{R}$,
are injective and continuously differentiable on $(0,\infty)$, and right-continuous at $0$. The derivatives of $e^{(\ell)}(\gamma)$, $\ell=1,2$, satisfy
\[
e^{(1),'}(\gamma)=-\frac{1}{2} \Phi\left(\frac{ c -\frac{\gamma}{2}}{\sqrt{\gamma}}\right) <0
\]
and
\begin{equation}
\label{eq:Derivative2}
e^{(2),'}(\gamma)=\left(\frac{\gamma}{2} - c\right)  \Phi\left(\frac{c -\frac{\gamma}{2}}{\sqrt{\gamma}}\right)-\sqrt{\gamma} \phi\left(\frac{ c - \frac{\gamma}{2}}{\sqrt{\gamma}}\right)  <0
\end{equation}
for $\gamma\in (0,\infty)$.
Here, the inequality in~\eqref{eq:Derivative2} follows from Mill's inequality that $(1-\Phi(x))/\phi(x) < 1/x $ for $x>0$. Specifically, if $\gamma/2 - c \le 0$, the inequality~\eqref{eq:Derivative2} already holds since both terms in the left expression of the inequality are negative for $\gamma>0$; otherwise, if $\gamma/2 -  c > 0$, we have by Mill's inequality that
\[
\frac{\left(\frac{\gamma}{2} - c\right)  \Phi\left(\frac{c - \frac{\gamma}{2}}{\sqrt{\gamma}}\right)}{\sqrt{\gamma} \phi\left(\frac{c - \frac{\gamma}{2}}{\sqrt{\gamma}}\right)} = \frac{\frac{\gamma}{2} - c}{\sqrt{\gamma}} \cdot \frac{1-\Phi\left(\frac{\frac{\gamma}{2} - c}{\sqrt{\gamma}}\right)}{\phi\left(\frac{  \frac{\gamma}{2} -c }{\sqrt{\gamma}}\right)} <1,
\]
which yields~\eqref{eq:Derivative2}. Moreover, by right-continuity at $\gamma=0$, the monotonicity extends to $[0,\infty)$. Consequently, $e^{(\ell)}: [0,\infty) \to (0,\infty)$ ($\ell=1,2$) is strictly decreasing. The proof is complete.
\end{proof}

\begin{proof}[Proof of~\cref{pro:weakconsistency}]
Recall that $Y_i^{(j)}$ is $Y_i \mid Y_j>0$ and that $R(\bm{x})$ is the tail copula defined in~\eqref{eq:UTDF}. By~\eqref{eq:MPD}, \eqref{eq:marginal_Transform}, \eqref{eq:UTDF} and the assumption on $\bm X$ and $\bm Y$, we know that
\begin{equation}
\label{eq:TailDistributionSame}
\pr\left(Y_{i}^{(j)}\ge x, Y_{j}^{(j)}\ge y\right)=R_{ij}(\exp(-x),\exp(-y)), \qquad (x,y)\in[-\infty,\infty]\times [0,\infty].
\end{equation}
With the notation $c=-\log a$ for $a\in (0,1]$, we have
\begin{equation}
\label{eq:RepMomentC}
\E\left[ \left\{Y_{i}^{(j)} \vee (-c) \right\}^{\ell} \right]=\int_{0}^{1/a} \frac{R_{ij}(x,1) \ell (-\log x)^{\ell-1}}{x} \d x +(\log a)^{\ell}:=\tilde{e}^{(\ell)}(\gamma,c);
\end{equation}
for the proof of this expression, see~\cref{sec:proofexpansion}.

Note that, with the transformation $u=-\log q$ ($0< q \le 1$), the excesses on the
exponential scale can be equivalently represented as
\begin{align*}
-\log(1-F_i(X_i))-u \cdt -\log(1-F_j(X_j))>u
= \log\left\{\frac{q}{1-F_i(X_i)}\right\} \cdt 1-F_j(X_j)<q.
\end{align*}
Therefore, we formulate the pre-asymptotic representation of $\tilde{e}^{(\ell)}(\gamma,c)$ on the uniform scale directly. For $q\in (0,1]$, let
\[
\bm{U}^{(m)}(q):=\bm{U} \mid U_m \le q
\]
be the random vector with distribution on
\[
\mathcal{D}^m(q):=[0,1]^{m-1} \times[0, q] \times[0,1]^{d-m}
\]
given by
\begin{align}
    \label{eq:DFUM}
\nonumber
\pr\left(\bm{U}^{(m)}(q) \le \bm{x}\right)&=q^{-1} \pr\left(U_1 \le x_1, \ldots, U_m \le (x_m \wedge q),\ldots, U_d \le x_d\right)\\
&=q^{-1} C(x_1, \ldots, (x_m \wedge q),\ldots, x_d), \qquad \bm{x}\in [0,1]^{d}.
\end{align}
Define the clipped pre-asymptotic form moment $e_{q,ij}^{(\ell)}$ for $i,j\in V$ as
\begin{equation*}
e_{q,ij}^{(\ell)} :=\E\left[\left\{\log \left(q/U_i^{(j)}(q) \right) \vee (-c) \right\}^\ell \right]=\E\left[\left\{\log \left((q/U_i^{(j)}(q)) \vee a\right)\right\}^\ell \right].
\end{equation*}
Then we have
\begin{equation}
\label{eq:RepMomentPre}
e_{q,ij}^{(\ell)}=\int_{0}^{1/a} \frac{C_{ij}(qt,q) \ell (-\log t)^{\ell-1}}{qt} \d t +(\log a)^{\ell},
\end{equation}
see~\cref{sec:proofexpansion} for the proof.
Since $q^{-1}C_{ij}(qx,q) \le x$, it follows from the representation of $\E\left[ \left\{Y_{i}^{(j)} \vee (-c) \right\}^{\ell} \right]$ and $e_{q,ij}^{(\ell)}$, the convergence in~\eqref{eq:UTDF} and the dominated convergence theorem that, for $\ell=1,2$,
\begin{align}
\label{eq:precovg}
    e_{q,ij}^{(\ell)} \to \E\left[ \left\{Y_{i}^{(j)} \vee (-c) \right\}^{\ell} \right], \  \text{as} \  q\to 0.
\end{align}

Now we consider the empirical moments. Based on the definition of $\hat{Y}_{ti}$ in \eqref{eq:emp_Y}, we define
\begin{align}
    \notag
    \tilde{e}_{n,ij}^{(\ell)}\left(k,c\right)
    &:= \frac{1}{k} \sum_{t=1}^{n} \left\{ \hat{Y}_{ti} \vee (-c) \right\}^{\ell} \I\left(\hat{F}_{j}(X_{tj}) \ge 1-\frac{k}{n} \right)\\
\label{eq:TruncEmpMoment}
    &= \frac{1}{k} \sum_{t=1}^{n} \left\{\log\left(\frac{k}{n(1-\hat{F}_i(X_{ti}))}\vee a\right)\right\}^{\ell} \I\left(\hat{F}_{j}(X_{tj}) \ge 1-\frac{k}{n} \right).
\end{align}
Using the definition of $\hat{U}_{ti}$, it can be written as
\begin{equation*}
    \tilde{e}_{n,ij}^{(\ell)}(k,-\log a)= \frac{1}{k} \sum_{t=1}^{n} \left\{- \log \left(\frac{n}{k}\hat{U}_{tj} \wedge a^{-1}\right) \right\} ^{\ell} \I\left( \hat{U}_{ti} \le \frac{k}{n} \right).
\end{equation*}
With the definition of $\tilde{R}(\bm{x})$ in~\eqref{eq:EmpiricalTailCopula}, we obtain the expansion of $\tilde{e}_{n,ij}^{(\ell)}$ as
\begin{equation}
\label{eq:RepMomentHat}
\tilde{e}_{n,ij}^{(\ell)}=\int_{1/k}^{1/a} \frac{\tilde{R}_{n,ij}(t,1) \ell (-\log t)^{\ell-1}}{t} \d t  + (\log a)^{\ell},
\end{equation}
see~\cref{sec:proofexpansion} for the proof.

Note that
\begin{equation}
\label{eq:EmpTDF}
 \sup_{\bm{x}\in [0,n/k]^{|I|}} \bigg| \hat{R}_{n,I}(\bm{x}) - \tilde{R}_{n,I}(\bm{x})\bigg|=O(1/k)
\end{equation}
almost surely as $n\to\infty$. For any constant $T \ge 0$, $I\subset V$ and $|I|\le 3$, by~(S.47) in \citet{EV20} and the homogeneity property of tail copulas, we know that
\begin{equation}
\label{eq:UniformCovg}
\sup_{\bm{x}\in [0,T]^{|I|}} \bigg|\hat{R}_{n,I}(\bm{x})-\frac{n}{k}C_{I}(k\bm{x}/n)\bigg|=O_p(k^{-1/2}).
\end{equation}
Therefore, by setting $q=k/n$ and combining the representations of $\tilde{e}_{q,ij}^{(\ell)}$ and $\tilde{e}_{n,ij}^{(\ell)}$ with~\eqref{eq:EmpTDF}--\eqref{eq:UniformCovg}, we have
\begin{align*}
    \lefteqn{\bigg|\tilde{e}_{n,ij}^{(\ell)}-\tilde{e}_{\frac{k}{n},ij}^{(\ell)}\bigg|}\\
   &=  \biggl|\int_{1/k}^{1/a} \frac{\{\hat{R}_{n,ij}(x,1)-\frac{n}{k} C_{ij}\left(\frac{kx}{n},\frac{k}{n}\right) \} \ell (-\log x)^{\ell-1}  }{x} \d x \\
   & \hspace{30mm} \mbox{} -\int_{0}^{1/k} \frac{\frac{n}{k} C_{ij}\left(\frac{kx}{n},\frac{k}{n}\right) \ell (-\log x)^{\ell-1} }{x} \d x + O_{p}(k^{-1} (\log k)^{\ell})\biggr|\\
   &\le \sup_{x,y\in [0,1/a]^2} \biggl| \hat{R}_{n,ij}(x,y) -\frac{n}{k} C_{ij}\left(\frac{kx}{n},\frac{ky}{n}\right) \biggr| \cdot\left(\log k-\log a\right)^{\ell}  + O_{p}\left(k^{-1}(\log k)^{\ell}\right)\\
    &= O_{p}\left(k^{-1} (\log k)^{\ell}\right)=o_p(1)
    \end{align*}
as $n\to\infty$. Setting $q=k/n$ in~\eqref{eq:precovg} and combining with the above equation yields that
\begin{equation}
\label{eq:tildeeCvg}
\tilde{e}_{n,ij}^{(\ell)} \overset{\pr}\to \E\left[ \left\{Y_{i}^{(j)} \vee (-c) \right\}^{\ell} \right], \, \mbox{as} \  n\to \infty.
\end{equation}

To establish the main result, note that the clipped moment function $\hat{e}_{n,ij}^{(\ell)}$ in~\eqref{eq:Empetilde} can be expressed as continuous functions of the empirical moment functions studied above. Specifically, by the definition of $\tilde{e}_{n,ij}^{(\ell)}$ in~\eqref{eq:TruncEmpMoment} and $\hat{e}_{n,ij}^{(\ell)}$, for $i,j\in V$ ($i\neq j$) and $\ell=1,2$, we have
\begin{align}
\label{eq:TildeEmpM1}
\nonumber
\hat{e}_{n,ij}^{(1)}&=\frac{1}{k} \sum_{t=1}^{n} \left\{\left( \log \frac{k}{n(1-\hat{F}_i(X_{ti}))}-\log  a\right)_{+} \right\} \I\left(\hat{F}_{j}(X_{tj}) \ge 1-\frac{k}{n} \right)\\
\nonumber
&=\frac{1}{k} \sum_{t=1}^{n} \left\{\log\left(  \frac{k}{n(1-\hat{F}_i(X_{ti}))} \vee a\right) \right\} \I\left(\hat{F}_{j}(X_{tj}) \ge 1-\frac{k}{n} \right) - \log a \\
&= \tilde{e}_{n,ij}^{(1)} - \log a
\end{align}
and
\begin{align}
\label{eq:TildeEmpM2}
\nonumber
\hat{e}_{n,ij}^{(2)}&=\frac{1}{k} \sum_{t=1}^{n} \left\{\left( \log \frac{k}{n(1-\hat{F}_i(X_{ti}))}-\log  a\right)_{+} \right\}^{2} \I\left(\hat{F}_{j}(X_{tj}) \ge 1-\frac{k}{n} \right)\\
\nonumber
&=\frac{1}{k} \sum_{t=1}^{n} \left\{\log\left(  \frac{k}{n(1-\hat{F}_i(X_{ti}))} \vee a\right) - \log a \right\}^{2} \I\left(\hat{F}_{j}(X_{tj}) \ge 1-\frac{k}{n} \right) \\
&= \tilde{e}_{n,ij}^{(2)} - 2 (\log a) \tilde{e}_{n,ij}^{(1)} + (\log a)^2.
\end{align}
By the continuous mapping theorem, \eqref{eq:tildeeCvg} and the representations above, the convergence in \cref{pro:weakconsistency} holds.
\end{proof}

\begin{proof}[Proof of~\cref{thm:WeakConstMoment}]

From \cref{pro:weakconsistency}
we know that, as $n\to \infty$,
\[
\hat{e}_{n,ij}^{(\ell)} \overset{\pr} \to  e^{(\ell)}(\gamma_{ij}).
\]
By the symmetry of the variogram matrix, we have $\gamma_{ij}=\gamma_{ji}$. Therefore, $e^{(\ell)}(\gamma_{ij})=e^{(\ell)}(\gamma_{ji})$. Consequently,
\begin{equation}
\label{eq:AveragedMomentCov}
\frac{1}{2} \left(\hat{e}_{n,ij}^{(\ell)}+\hat{e}_{n,ji}^{(\ell)} \right) \overset {\pr} \to e^{(\ell)}(\gamma_{ij})
\end{equation}
as $n \to \infty$ for $\ell=1,2$.

Furthermore, by \cref{lemma:monotonicity}, the function $e^{(\ell)}(\gamma)$ is strictly decreasing on $(0,\infty)$, with boundary limits
\[
\lim_{\gamma\to 0} e^{(1)}(\gamma,c) =1+c, \  \  \lim_{\gamma\to 0} e^{(2)}(\gamma,c) =(1+c)^2+1
\]
and
\[
\lim_{\gamma\to \infty} e^{(\ell)}(\gamma,c) = 0.
\]
Hence, $e^{(\ell)}(\gamma)=e^{(\ell)}(\cdot,c): (0,\infty) \to \mathcal{R}_c^{(\ell
)}$ ($\ell=1,2$) is a bijection with range $\mathcal{R}_c^{(1)}=(0,1+c)$ or $\mathcal{R}_c^{(2
)}=(0,(1+c)^2+1)$. Therefore, its inverse function $e^{(\ell),\leftarrow}$ is well-defined and continuous on  $\mathcal{R}_c^{(\ell)}$. Consequently, the moment estimator $\hat{\gamma}_{n,ij}^{\mathrm{M},(\ell)}$ $(\ell=1,2)$ in~\eqref{eq:M1} is well-defined whenever
\[\frac{1}{2}\left(\hat e_{n,ij}^{(\ell)}+\hat e_{n,ji}^{(\ell)}\right) \in \mathcal{R}_c^{(\ell)},
\]
which holds with probability tending to one by~\eqref{eq:AveragedMomentCov}. This establishes the existence of $\hat{\gamma}_{n,ij}^{\mathrm{M},(\ell)}$ with probability tending to one.

Consider the case where $\gamma_{ij}=0$. Recall that
we interpret $e^{(\ell)}(0,c)$ by continuous extension
\[
e^{(1)}(0,c)=1+c, \quad e^{(2)}(0,c)=(1+c)^2+1.
\]
Since $e^{(\ell)}(\gamma,c)$ is continuous and strictly monotone on $(0,\infty)$, its inverse admits a right-continuous extension at the upper boundary points of $\mathcal{R}_c^{(\ell)}$, in the sense that
\[
e^{(1),\leftarrow}(y) \to 0 \quad \text{as } y \uparrow 1+c
\]
and
\[
e^{(2),\leftarrow}(y) \to 0 \quad \text{as } y \uparrow (1+c)^2+1.
\]
Therefore, the inverse mapping can be continuously extended to $e^{(\ell)}(0,c)$ via right-continuity by setting
\[
e^{(\ell),\leftarrow}(e^{(\ell)}(0,c))=0.
\]
Hence, the moment estimators remain well-defined in this case.

Finally, the weak consistency of $\hat{\gamma}_{n,ij}^{\mathrm{M},(\ell)}$ follows from the continuous mapping theorem applied to extended inverse mapping
$e^{(\ell),\leftarrow}:\tilde{\mathcal{R}}_c^{(\ell)} \to [0,\infty)$ and \eqref{eq:AveragedMomentCov}, where $\tilde{\mathcal{R}}_c^{(1)}=(0,1+c]$ and $\tilde{\mathcal{R}}_c^{(2)}=(0,(1+c)^2+1]$.
\end{proof}

\begin{proof}[Proof of \cref{pro:WeakCovMoment}]
Recall that
\[
\hat{\nu}_{n,ij}(x,1)=\sqrt{k} \left\{\hat{R}_{n,ij}(x,1)-R_{ij}(x,1) \right\}.
\]
For $\ell=1,2$, using the expansion of $\tilde{e}_{n,ij}^{(\ell)}=\tilde{e}_{n,ij}^{(\ell)}(k,c)$ in~\eqref{eq:RepMomentHat} and that of $\tilde{e}^{(\ell)}(\gamma)=\tilde{e}^{(\ell)}(\gamma,c)$ in~\eqref{eq:RepMomentC}, together with~\eqref{eq:EmpTDF} we have
\begin{align*}
&\sqrt{k}\left(\tilde{e}_{n,ij}^{(\ell)}-\tilde{e}^{(\ell)}(\gamma_{ij})\right)\\
&=\sqrt{k} \left\{\int_{1/k}^{1/a} \frac{\tilde{R}_{n,ij}(x,1) \ell (-\log x)^{\ell-1}}{x} \d x - \int_{0}^{1/a} \frac{R_{ij}(x,1) \ell (-\log x)^{\ell-1}}{x} \d x \right\}
\\
&=\sqrt{k} \Bigg[\int_{1/k}^{1/a} \frac{\left\{\tilde{R}_{n,ij}(x,1)-R_{ij}(x,1)\right\} \ell (-\log x)^{\ell-1}}{x} \d x \\
&\qquad \quad  - \int_{0}^{1/k} \frac{R_{ij}(x,1) \ell (-\log x)^{\ell-1} }{x} \d x \Bigg]
\\
&=\int_{1/k}^{1/a} \frac{\hat{\nu}_{n,ij}(x,1) \ell (-\log x)^{\ell-1} }{x} \d x - \sqrt{k} \int_{0}^{1/k} \frac{R_{ij}(x,1) \ell (-\log x)^{\ell-1}}{x} \d x \\
& \quad +O\left(k^{-1/2} (\log{k})^{\ell} \right)
\end{align*}
almost surely. Note that
\[
\sqrt{k} \int_{0}^{1/k} \frac{R_{ij}(x,1) \ell (-\log x)^{\ell-1} }{x} \d x
\le \sqrt{k} \int_{0}^{1/k}  \ell (-\log x)^{\ell-1}  \d x \le \frac{2}{\sqrt{k}} (\log k +1)
\]
for $\ell=1,2$ by the inequality $R_{ij}(x,1)\le x$. Under the assumption that $\gamma_{ij}>0$, the partial derivatives of the bivariate tail copula function of the \HR{} distribution are continuous. Combining \cref{lemma:UniformConverge},  \cref{lemma:ConRatePartialDer} with the Slutsky's theorem (see Theorem~2.7 in~\citet{VW96}), we obtain
\begin{align}
\label{eq:JointWeakCov}
\nonumber
&\sqrt{k} \left(\tilde{e}_{n,ij}^{(1)}-\tilde{e}^{(1)}(\gamma_{ij}), \,\tilde{e}_{n,ij}^{(2)}-\tilde{e}^{(2)}(\gamma_{ij}), \, \, i, j\in V, \,  i\neq j \right)  \\
&\overset{d}\to \left(\int_{0}^{1/a} \frac{B_{ij}(x,1)}{x} \d x, \,  -2 \int_{0}^{1/a} \frac{B_{ij}(x,1) \log x }{x} \d x , \, i, j\in V, \, i\neq j\right).
\end{align}

To obtain the asymptotic distribution of the original moment functions, we next relate $\hat{e}_{n,ij}^{(1)}(k,c)$ to $\tilde{e}_{n,ij}^{(1)}(k,c)$. By \eqref{eq:TildeEmpM1} and \eqref{eq:TildeEmpM2}, we have
\begin{align*}
\sqrt{k} \left(\hat{e}_{n,ij}^{(1)}(k,c)-e^{(1)}(\gamma_{ij})\right) = \sqrt{k} \left(\tilde{e}_{n,ij}^{(1)}(k,c)-\tilde{e}^{(1)}(\gamma_{ij}) \right)
\end{align*}
and
\begin{multline*}
\sqrt{k} \left(\hat{e}_{n,ij}^{(2)}(k,c)-e^{(2)}(\gamma_{ij})\right) \\
= \sqrt{k}  \left(\tilde{e}_{n,ij}^{(2)}(k,c)-\tilde{e}^{(2)}(\gamma_{ij})\right)   -2 \log a \cdot \sqrt{k} \left( \tilde{e}_{n,ij}^{(1)}(k,c)-\tilde{e}^{(1)}(\gamma_{ij}) \right).
\end{multline*}
Combining these identities with \eqref{eq:JointWeakCov} yields the desired weak convergence result.
\end{proof}

\begin{proof}[Proof of~\cref{thm:AsyNormalityMoment}]
By~\cref{pro:WeakCovMoment} and the relation that $c=-\log a$, we have
\begin{align}
\label{eq:AsyNormalMoment}
\nonumber
&   \left( \frac{1}{2} \left\{\sqrt{k} \left( \hat{e}^{(\ell)}_{n,ij}(k,c) - e^{(\ell)}(\gamma_{ij}) \right) + \sqrt{k} \left( \hat{e}^{(\ell)}_{n,ji}(k,c) - e^{(\ell)}(\gamma_{ji}) \right) \right\} , \, i, j\in V, \, i\neq j \right) \\
&\quad \overset{d}\to \left(\frac{1}{2} \int_{0}^{\exp(c)} \frac{\left\{ B_{ij} (x,1)+B_{ji}(x,1) \right\} \ell (-\log x +c )^{\ell-1} }{x} \d x  , \, i, j\in V, \, i\neq j \right)
\end{align}
for $\ell=1,2$ as $n \to \infty$.

Recall that  $e^{(\ell),\leftarrow}$ denotes the inverse function of $e^{(\ell)}(\gamma)$ for $\ell=1,2$. From the definition of $\hat{\gamma}_{n,ij}^{\mathrm{M},(\ell)}$ in~\eqref{eq:M1} and the fact that $\gamma_{ij}=\gamma_{ji}$, we can write
\begin{align*}
\sqrt{k}\left(\hat{\gamma}_{n,ij}^{\mathrm{M},(\ell)}-\gamma_{ij}\right)
&=e^{(\ell),\leftarrow} \left(\frac{\hat{e}_{n,ij}^{(\ell)}+\hat{e}_{n,ji}^{(\ell)}}{2} \right) - e^{(\ell),\leftarrow} \left(\frac{e^{(\ell)}(\gamma_{ij})+e^{(\ell)}(\gamma_{ji})}{2} \right)
\end{align*}
for $i,j\in V$ and $i\neq j$. Here, the inverse at $(\hat{e}_{n,ij}^{(\ell)}+\hat{e}_{n,ji}^{(\ell)})/2$ is well-defined whenever $(\hat{e}_{n,ij}^{(\ell)}+\hat{e}_{n,ji}^{(\ell)})/2 \in \mathcal{R}_c^{(\ell)}$ (see the proof of~\cref{thm:WeakConstMoment}).
Note that $e^{(\ell)}(\gamma)$ is differentiable on $(0,\infty)$, with derivative $e^{(\ell),'}(x)$ given in~\cref{thm:AsyNormalityMoment}. Hence, by the delta method and~\eqref{eq:AsyNormalMoment}, we have for $\ell=1,2$ that
\begin{align*}
&\left(\sqrt{k}\left(\hat{\gamma}_{n,ij}^{\mathrm{M},(\ell)}-\gamma_{ij}\right); \; i,j\in V, \, i\neq j \right) \\
&\quad \overset{d}\to \left(\frac{1}{2 e^{(\ell),'}(\gamma_{ij})}  \int_{0}^{\exp(c)} \frac{(B_{ij}(x,1) + B_{ji}(x,1))\cdot \ell \cdot (-\log x +c )^{\ell-1} }{x} \d x ; \; i,j\in V, \, i\neq j \right).
\end{align*}
The proof is complete.
\end{proof}

\subsection{Proofs of the representations of moments}
\label{sec:proofexpansion}

\subsubsection{Proof of~\cref{eq:RepMomentC}}

\begin{proof}
For $x\in \mathbb{R}$ and $\ell=1,2$, we have
\begin{equation}
\label{eq:PowerIntegral}
    x^{\ell}=\int_{0}^{\infty} \I\{t\le x\} \ell t^{\ell-1} \d t+ (-1)^{\ell} \int_{0}^{\infty} \I\{ t<-x\} \ell t^{\ell-1}\d t.
\end{equation}
Hence,
\begin{align}
\label{eq:LogIntegralmoment}
\nonumber
    \left\{Y_{i}^{(j)} \vee (-c) \right\}^{\ell}
    &=  \int_{0}^{\infty} \I\{t\le Y_{i}^{(j)} \vee (-c)\} \cdot \ell t^{\ell-1}\d t \\
    \nonumber
    &\quad + (-1)^{\ell} \int_{0}^{\infty} \I\{t< -(Y_{i}^{(j)} \vee (-c))\} \cdot \ell t^{\ell-1}  \d t \\
    \nonumber
    & = \int_{0}^{\infty} \I\{t\le Y_{i}^{(j)} \vee (-c)\}  \cdot \ell t^{\ell-1}  \d t
    + (-1)^\ell \int_{0}^{c} \I\{-c \le  Y_{i}^{(j)}< -t\} \cdot \ell t^{\ell-1} \d t\\
    \nonumber
    &\quad + (-1)^\ell \int_{0}^{c} \I\{Y_{i}^{(j)}< -c\} \cdot \ell t^{\ell-1} \d t \\
    \nonumber
    & =  \int_{0}^{\infty} \I\left\{t\le Y_{i}^{(j)} \right\} \cdot \ell t^{\ell-1} \d t + (-1)^\ell \int_{0}^{c} \I\{-c\le  Y_{i}^{(j)}< -t\} \cdot  \ell t^{\ell-1} \d t \\
      &\quad + (-1)^{\ell} c^{\ell} \I\left\{Y_{i}^{(j)}< -c\right\}.
\end{align}
By \cref{lemma:ExistTruVario}, Fubini's theorem and~\eqref{eq:TailDistributionSame}, we have
\begin{align*}
    \E\left[ \left\{ Y_{i}^{(j)} \vee (-c) \right\}^{\ell} \right]
    &= (-c)^{\ell} \pr\left(Y_{i}^{(j)}< -c\right) +
    (-1)^\ell   \int_{0}^{c} \pr(-c\le Y_{i}^{(j)}<-t) \cdot \ell t^{\ell-1} \d t\\
    &\quad  +  \int_{0}^{\infty} \pr\left(t\le Y_{i}^{(j)} \right) \cdot \ell t^{\ell-1}  \d t\\
    &= (-c)^{\ell} \pr\left(Y_{i}^{(j)}< -c\right) +
    (-1)^\ell   \int_{0}^{c} \pr(-c\le Y_{i}^{(j)}<-t) \cdot \ell t^{\ell-1} \d t\\
    &\quad  +  \int_{0}^{\infty} \pr\left(t\le Y_{i}^{(j)} \right) \cdot \ell t^{\ell-1}  \d t\\
    &= (-c)^{\ell} \left\{1-R_{ij}(\exp(c),1)\right\} \\
    &\quad + (-1)^\ell  \int_{0}^{c} \left\{R_{ij}(\exp(c),1)-R_{ij}(\exp(t),1)\right\} \cdot \ell t^{\ell-1}\d t  \\
    &\quad +  \int_{0}^{\infty} R_{ij}(\exp(-t),1) \cdot \ell  t^{\ell-1} \d t,
\end{align*}
where Fubini's theorem is used in the first step together with the fact in \cref{lemma:ExistTruVario} and the second equality follows from~\eqref{eq:TailDistributionSame}. Since we set $c=-\log a$. By the changes of variables $t=\log u$ and $t=-\log u$ in the first and second integrals, respectively, the expectation can be rewritten as
\begin{align*}
    \E\left[ \left\{ Y_{i}^{(j)} \vee (-c) \right\}^{\ell} \right]
    &= (-\log a)^{\ell} \left\{1-R_{ij}(1/a,1)\right\} \\
    &\quad + (-1)^\ell  \int_{1}^{1/a} \frac{\left\{R_{ij}(1/a,1)-R_{ij}(u,1)\right\} \cdot \ell (\log u)^{\ell-1} }{u} \d u  \\
    &\quad +  \int_{0}^{1} \frac{ R_{ij}(u,1) \cdot \ell  (-\log u)^{\ell-1}}{u} \d u\\
    &= \int_{0}^{1/a} \frac{R_{ij}(u,1) \cdot \ell (-\log u)^{\ell-1}}{u} \d u  + (\log a)^{\ell}.
\end{align*}

\end{proof}

\subsubsection{Proof of~\cref{eq:RepMomentPre}}

\begin{proof}
Similarly to~\eqref{eq:PowerIntegral}, for $x>0$ and $\ell=1,2$, we have
\begin{equation}
\label{eq:LogIntegral}
    (\log x)^{\ell}=\int_{1}^{\infty} \frac{\I\{t\le x\} \ell (\log t)^{\ell-1}}{t} \d t - \int_{0}^{1} \frac{\I\{t> x\} \ell (\log t)^{\ell-1}}{t} \d t.
\end{equation}
Therefore,
\begin{align*}
    &\left\{\log \left(q/U_i^{(j)}(q) \vee a\right) \right\}^{\ell}\\
    &\quad =  \int_{1}^{\infty} \frac{\I\{t\le [q/U_i^{(j)}(q) \vee a] \} \ell (\log t)^{\ell-1}}{t} \d t - \int_{0}^{1} \frac{\I\{t> [q/U_i^{(j)}(q) \vee a]\} \ell (\log t)^{\ell-1}}{t} \d t \\
    &\quad = \int_{1}^{\infty} \frac{\I\{t\le q/U_i^{(j)}(q) \} \ell (\log t)^{\ell-1} }{t} \d t -\int_{a}^{1} \frac{\I\{a\le  q/U_i^{(j)}(q)< t\} \ell (\log t)^{\ell-1} }{t} \d t\\
    &\qquad- \int_{a}^{1} \frac{\I\left\{q/U_i^{(j)}(q)< a\right\} \ell (\log t)^{\ell-1} }{t} \d t \\
    &\quad =  \int_{1}^{\infty} \frac{\I\left\{t\le q/U_i^{(j)}(q) \right\} \ell (\log t)^{\ell-1} }{t} \d t - \int_{a}^{1} \frac{\I\{a\le  q/U_i^{(j)}(q)< t\} \ell (\log t)^{\ell-1} }{t} \d t \\
    &\qquad  + (\log a)^{\ell} \I\left\{q/U_i^{(j)}(q)< a\right\}.
\end{align*}
Therefore, by Fubini's theorem and~\eqref{eq:DFUM}, we have
\begin{align*}
    &\E\left[\left\{\log \left(q/U_i^{(j)}(q) \vee a\right) \right\}^{\ell} \right]\\
    &\quad= (\log a)^{\ell} \pr\left(q/U_i^{(j)}(q)< a\right) -  \int_{a}^{1} \frac{\pr(a\le q/U_i^{(j)}(q)<t) \ell (\log t)^{\ell-1} }{t} \d t  \\
    &\qquad +   \int_{1}^{\infty} \frac{\pr\left(t\le q/U_i^{(j)}(q) \right) \ell (\log t)^{\ell-1} }{t} \d t\\
    &\quad= (\log a)^{\ell} \left\{1-q^{-1}C_{ij}(q a^{-1},q)\right\} -  \int_{1}^{1/a} \frac{[C_{ij}(q a^{-1},q)-C_{ij}(qt,q)] \ell (-\log t)^{\ell-1} }{qt} \d t  \\
    &\qquad +  \int_{0}^{1} \frac{C_{ij}(qt,q) \ell (-\log t)^{\ell-1}}{qt} \d t\\
    &\quad = \int_{0}^{1/a} \frac{C_{ij}(qt,q) \ell (-\log t)^{\ell-1}}{qt} \d t +(\log a)^{\ell},
\end{align*}
where Fubini's theorem is used in the first step and the second equality follows from~\eqref{eq:DFUM}.
\end{proof}

\subsubsection{Proof of~\cref{eq:RepMomentHat}}
\begin{proof}
Recall that $0<a\le 1$. By~\eqref{eq:LogIntegral} (with the equal sign in the indicator function moved from the first integrand to the second), we have for $i,j\in V$, $i\neq j$ and $\ell=1,2$ that
\begin{align*}
    &\left\{-\log \left(\frac{n}{k}\hat{U}_{ti} \wedge a^{-1}\right) \right\}^{\ell}\\
    &\quad =  -\int_{1}^{\infty} \frac{\I\{t< \left(\frac{n}{k}\hat{U}_{ti} \wedge a^{-1}\right)\} \ell (-\log t)^{\ell-1} }{t} \d t + \int_{0}^{1} \frac{\I\{t\ge \left(\frac{n}{k}\hat{U}_{ti} \wedge a^{-1}\right)\} \ell (-\log t)^{\ell-1} }{t} \d t \\
    &\quad = -\int_{1}^{1/a} \frac{\I\{a^{-1}<  \frac{n}{k}\hat{U}_{ti} \} \ell (-\log t)^{\ell-1} }{t} \d t- \int_{1}^{1/a} \frac{\I\{t< \frac{n}{k}\hat{U}_{ti} \le a^{-1} \} \ell (-\log t)^{\ell-1} }{t} \d t \\
   &\qquad + \int_{1/k}^{1} \frac{\I\left\{t\ge \frac{n}{k}\hat{U}_{ti} \right\} \ell (-\log t)^{\ell-1}}{t} \d t \\
    &\quad =  (\log a)^{\ell} \I\left\{\hat{U}_{ti} > \frac{k}{n} a^{-1}\right\}  - \int_{1}^{1/a} \frac{\I\left\{\frac{k}{n} t< \hat{U}_{ti}\le  \frac{k}{n} a^{-1} \right\} \ell (-\log t)^{\ell-1} }{t} \d t\\
    &\qquad + \int_{1/k}^{1} \frac{\I\{  \hat{U}_{ti} \le \frac{k}{n} t\} \ell (-\log t)^{\ell-1} }{t} \d t.
\end{align*}
Hence, using the equations
\[
\I\left\{\frac{k}{n} t< \hat{U}_{ti}\le  \frac{k}{n} a^{-1},\hat{U}_{tj} \le k/n \right\}
= \I\left\{\hat{U}_{ti} \le \frac{k}{n} a^{-1},\hat{U}_{tj} \le k/n\right\}
- \I\left\{\hat{U}_{ti} \le \frac{k}{n} t,\hat{U}_{tj} \le k/n\right\}
\]
and
\[
\I\left\{\hat{U}_{ti} > \frac{k}{n} a^{-1},\hat{U}_{tj} \le k/n\right\} = \I\left\{\hat{U}_{tj} \le k/n\right\} - \I\left\{\hat{U}_{ti} \le \frac{k}{n} a^{-1},\hat{U}_{tj} \le k/n\right\},
\]
we get
\begin{align*}
 \hat{e}_{n,ij}^{(\ell)} &=\frac{1}{k}\sum_{t=1}^{n}\left\{-\log \left(\frac{n}{k}\hat{U}_{ti} \wedge a^{-1}\right)\right\}^{\ell} \I\left\{\hat{U}_{tj} \le k/n\right\}\\
 &=  \frac{1}{k}\sum_{t=1}^{n} \left\{(\log a)^{\ell} \I\left\{\hat{U}_{ti} > \frac{k}{n} a^{-1},\hat{U}_{tj} \le k/n\right\} \right. \\
  &\quad - \int_{1}^{1/a} \frac{\I\left\{\frac{k}{n} t< \hat{U}_{ti}\le  \frac{k}{n} a^{-1},\hat{U}_{tj} \le k/n \right\} \ell (-\log t)^{\ell-1} }{t} \d t \\
 &\quad  + \left.\int_{1/k}^{1} \frac{\I\{  \hat{U}_{ti} \le \frac{k}{n} t,\hat{U}_{tj} \le k/n\} \ell (-\log t)^{\ell-1}}{t} \d t \right\}\\
 &=\int_{1/k}^{1/a} \frac{\tilde{R}_{n,ij}(t,1) \ell (-\log t)^{\ell-1}}{t} \d t  + (\log a)^{\ell}.
\end{align*}
The proof is complete.
\end{proof}

\subsection{Explicit expression of the asymptotic variance}
\label{sec:AsyVar}
In this section, we derive the explicit expression of the asymptotic variance of the limiting distribution in \cref{thm:AsyNormalityMoment}.

For a \HR{} MGP distributed random vector $\bm{Y}$ with variogram matrix $\bm{\Gamma}=(\gamma_{ij})_{i,j \in V}$, the bivariate tail copula is given in \eqref{eq:bivariateTailCopulaHR}, while its partial derivatives with respect to $x$ and $y$ are  specified by \eqref{eq:firstPartialDerivative}. We redefine the notations as
\begin{align*}
&\dot{R}_1(x,y;\gamma_{ij})=\dot{R}_{ij}^{i}(x,y), & \dot{R}_{2}(x,y;\gamma_{ij})=\dot{R}_{ij}^{j}(x,y).
\end{align*}

For each pair $(i,j)\in V^2$ with $i \ne j$, the limiting random variable in \cref{thm:AsyNormalityMoment} is a centered Gaussian random variable with variance $\sigma_\ell^2(\gamma_{ij},c)$, which has expression
\begin{equation*}
    \sigma_\ell^2(\gamma,c)
    = \int_0^{\exp(c)} \int_0^{\exp(c)} V(x,y;\gamma) \cdot f_\ell(x;\gamma,c) \cdot f_\ell(y;\gamma,c) \, \mathrm{d}x \, \mathrm{d}y,
\end{equation*}
with
\begin{equation*}
f_\ell(x;\gamma,c) = \frac{\ell \cdot (-\log x+c)^{\ell-1}}{e^{(\ell),'}(\gamma) \cdot x}, \qquad x>0,
\end{equation*}
and
\[
 V(x,y;\gamma_{ij})=\E\left( \frac{B_{ij}(x,1)+B_{ji}(x,1)}{2} \cdot \frac{B_{ij}(y,1)+B_{ji}(y,1)}{2} \right).
\]
Note that by the definition of the process $B_{ij}(x,1)$, we have
\[
B_{ij}(x,1) \overset{d}= B_{ji}(x,1)
\]
for each pair of $(i,j)\in V\times V$ and $i\neq j$. By \cref{eq:covW}, \cref{eq:ProcessBij} and the relation that $\dot{R}_{1}(x,y;\gamma)=\dot{R}_{2}(y,x;\gamma)$, a direct calculation gives that
\begin{align*}
V(x,y;\gamma)
&= \frac{1}{2} \Bigg\{\left(1-\dot{R}_{1}(x,1;\gamma)-\dot{R}_{1}(y,1;\gamma)\right) R(x\wedge y, 1;\gamma) \\
&\qquad - \left( 1- \dot{R}_{1}(y,1;\gamma)\right) \dot{R}_{1}(1,x;\gamma) R(y,1;\gamma) \\
& \qquad- \left( 1- \dot{R}_{1}(x,1;\gamma)\right) \dot{R}_{1}(1,y;\gamma) R(x,1;\gamma)
 + (x \wedge y)  \dot{R}_{1}(x,1;\gamma) \dot{R}_{1}(y,1;\gamma)
 \\
&\qquad +  \dot{R}_{1}(1,x;\gamma) \dot{R}_{1}(1,y;\gamma) \Bigg\} \\
&\qquad + \frac{1}{2} \Bigg\{ R(x\wedge1,y\wedge 1; \gamma) - (R_1(x,1;\gamma)+R_1(1,y;\gamma)) R(x\wedge 1, y;\gamma) \\
&\qquad - (R_1(1,x;\gamma)+R_1(y,1;\gamma) ) R(x,y\wedge 1;\gamma) + R_1(x,1;\gamma) R_1(y,1;\gamma) R(x,y;\gamma) \\
&\qquad +R_1(1,x;\gamma) R_1(y,1;\gamma) (y\wedge 1)
+ R_1(x,1;\gamma) R_1(1,y;\gamma) (x\wedge 1)\\
&\qquad + R_1(1,x;\gamma) R_1(1,y;\gamma) R(1,1;\gamma) \Bigg\}.
\end{align*}

\section{Additional simulation results}
\label{sec:gamma2}

The randomly generated $10$-dimensional variogram matrix $\bm{\Gamma}_1$ used in the simulation study in \cref{sec:simstudy} is
\begin{align*}
\bm{\Gamma}_1=
\begin{pmatrix}
0.00 & 0.89 & 0.82 & 0.61 & 0.98 & 1.05 & 0.25 & 0.69 & 1.08 & 0.75 \\
  0.89 & 0.00 & 1.91 & 1.40 & 2.08 & 2.16 & 1.07 & 1.50 & 2.05 & 1.64 \\
  0.82 & 1.91 & 0.00 & 1.51 & 2.07 & 2.12 & 1.07 & 1.71 & 2.00 & 1.58 \\
  0.61 & 1.40 & 1.51 & 0.00 & 1.65 & 1.73 & 0.85 & 1.32 & 1.72 & 1.37 \\
  0.98 & 2.08 & 2.07 & 1.65 & 0.00 & 2.22 & 1.23 & 1.84 & 2.13 & 1.74 \\
  1.05 & 2.16 & 2.12 & 1.73 & 2.22 & 0.00 & 1.31 & 1.90 & 2.20 & 1.81 \\
  0.25 & 1.07 & 1.07 & 0.85 & 1.23 & 1.31 & 0.00 & 0.93 & 1.33 & 1.00 \\
  0.69 & 1.50 & 1.71 & 1.32 & 1.84 & 1.90 & 0.93 & 0.00 & 1.83 & 1.45 \\
  1.08 & 2.05 & 2.00 & 1.72 & 2.13 & 2.20 & 1.33 & 1.83 & 0.00 & 1.83 \\
  0.75 & 1.64 & 1.58 & 1.37 & 1.74 & 1.81 & 1.00 & 1.45 & 1.83 & 0.00 \\
\end{pmatrix}.
\end{align*}

In \crefrange{fig:Cmp10DBias}{fig:Cmp10DMSE}, we report the mean bias, variance, and MSE of the first $10$ entries of $\bm{\Gamma}_1$ based on the empirical variogram estimator $\hat{\gamma}_{n,ij}^{\mathrm{EMP}}(k)$ and the moment estimators $\hat{\gamma}_{n,ij}^{\mathrm{M},(\ell)}(k,-\log a)$ with $\ell=1,2$ and $a=0.25$. The samples are generated from the 10-dimensional \HR{} distribution with variogram matrix $\bm{\Gamma}_1$. These results are provided here for completeness and to facilitate a more detailed inspection of the component-wise estimation performance.

In addition, the related analysis for samples generated from a $5$-dimensional \HR{} distribution with parameter matrix $\bm{\Gamma}_2$ is presented in \crefrange{fig:Cmp5DBias}{fig:Cmp5DMSE}.

\begin{figure}
\centering
\includegraphics[width=0.95\textwidth]{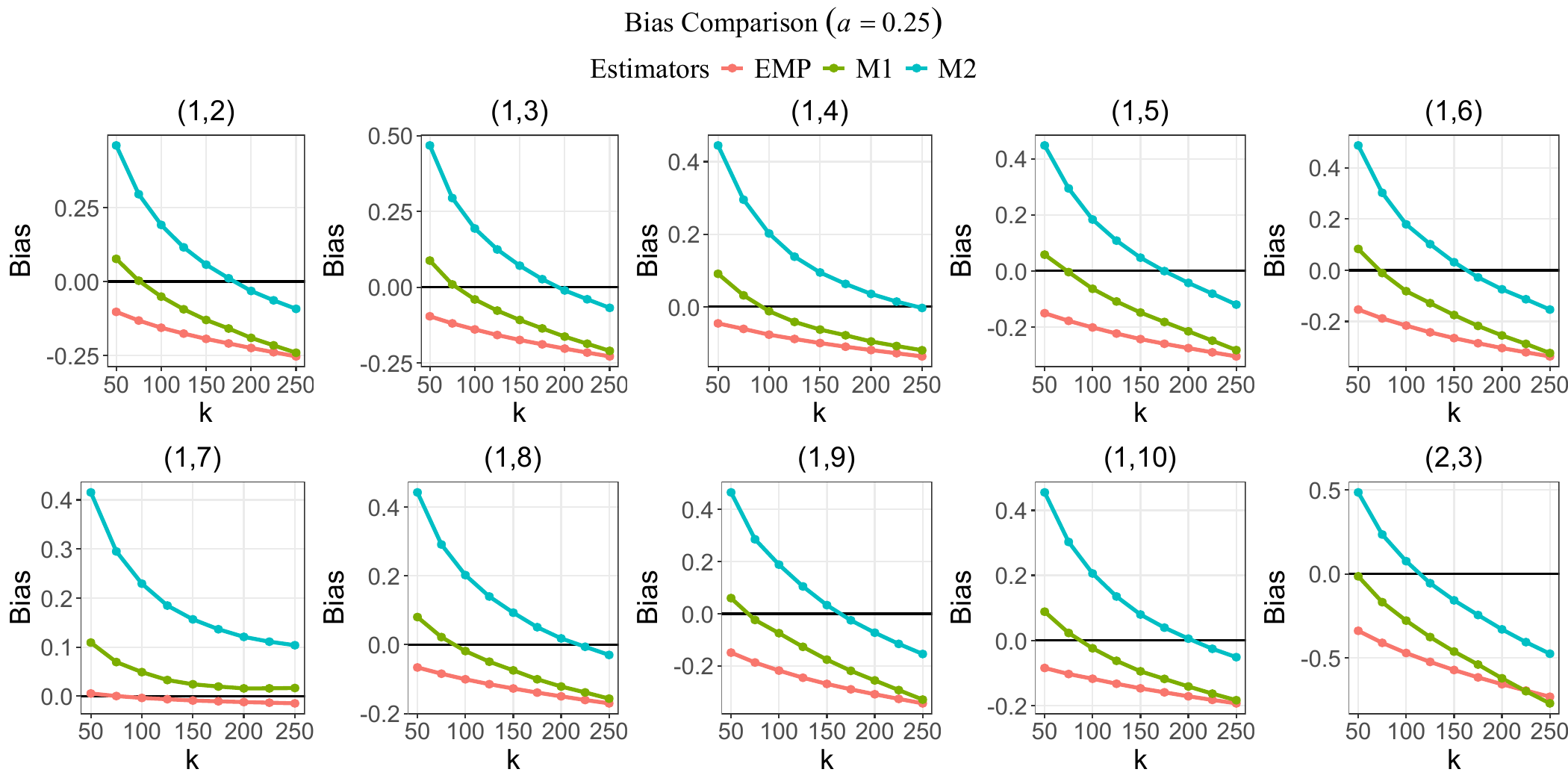}
\caption{The mean estimation bias for the first ten elements $\gamma_{ij}$ of $\bm{\Gamma}_1$, based on the empirical variogram estimator $\hat{\gamma}_{n,ij}^{\EMP}$ (EMP), first-order moment estimator $\hat{\gamma}_{n,ij}^{\mathrm{M},(1)}(k,-\log a)$ (M1) and second-order moment estimator $\hat{\gamma}_{n,ij}^{\mathrm{M},(2)}(k,-\log a)$ (M2) with $a=0.25$ in 300 replications. The random samples with size $n=1000$ are drawn from the $10-$dimensional \HR{} distribution with variogram matrix $\bm{\Gamma}_1$.}
\label{fig:Cmp10DBias}
\end{figure}

\begin{figure}
\centering
\includegraphics[width=0.95\textwidth]{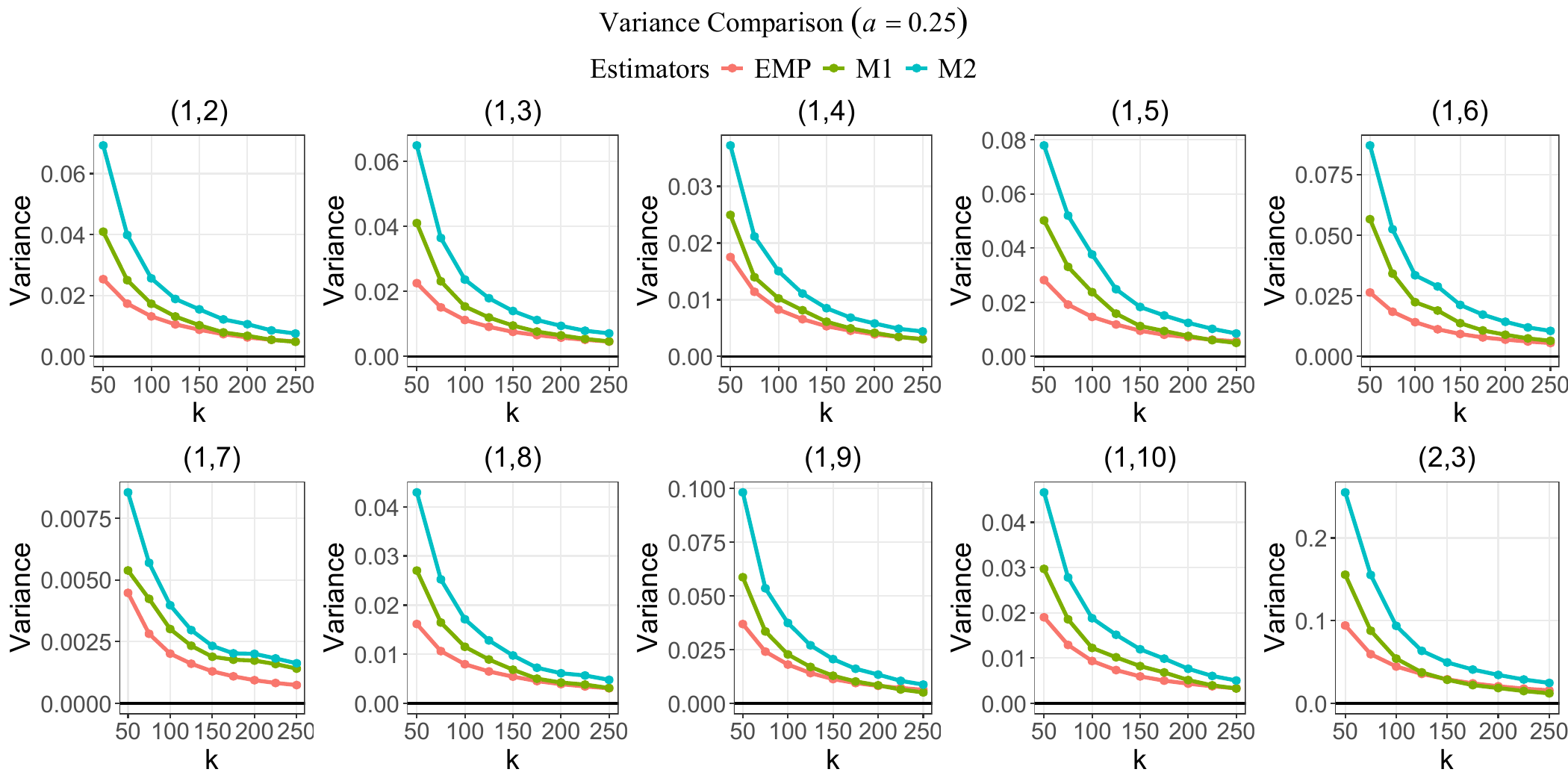}
\caption{The mean estimation variance for the first ten elements $\gamma_{ij}$ of $\bm{\Gamma}_1$, based on the empirical variogram estimator $\hat{\gamma}_{n,ij}^{\EMP}$ (EMP), first-order moment estimator $\hat{\gamma}_{n,ij}^{\mathrm{M},(1)}(k,-\log a)$ (M1) and second-order moment estimator $\hat{\gamma}_{n,ij}^{\mathrm{M},(2)}(k,-\log a)$ (M2) with $a=0.25$ in 300 replications. The random samples with size $n=1000$ are drawn from the $10-$dimensional \HR{} distribution with variogram matrix $\bm{\Gamma}_1$.}
\label{fig:Cmp10DVariance}
\end{figure}

\begin{figure}
\centering
\includegraphics[width=0.95\textwidth]{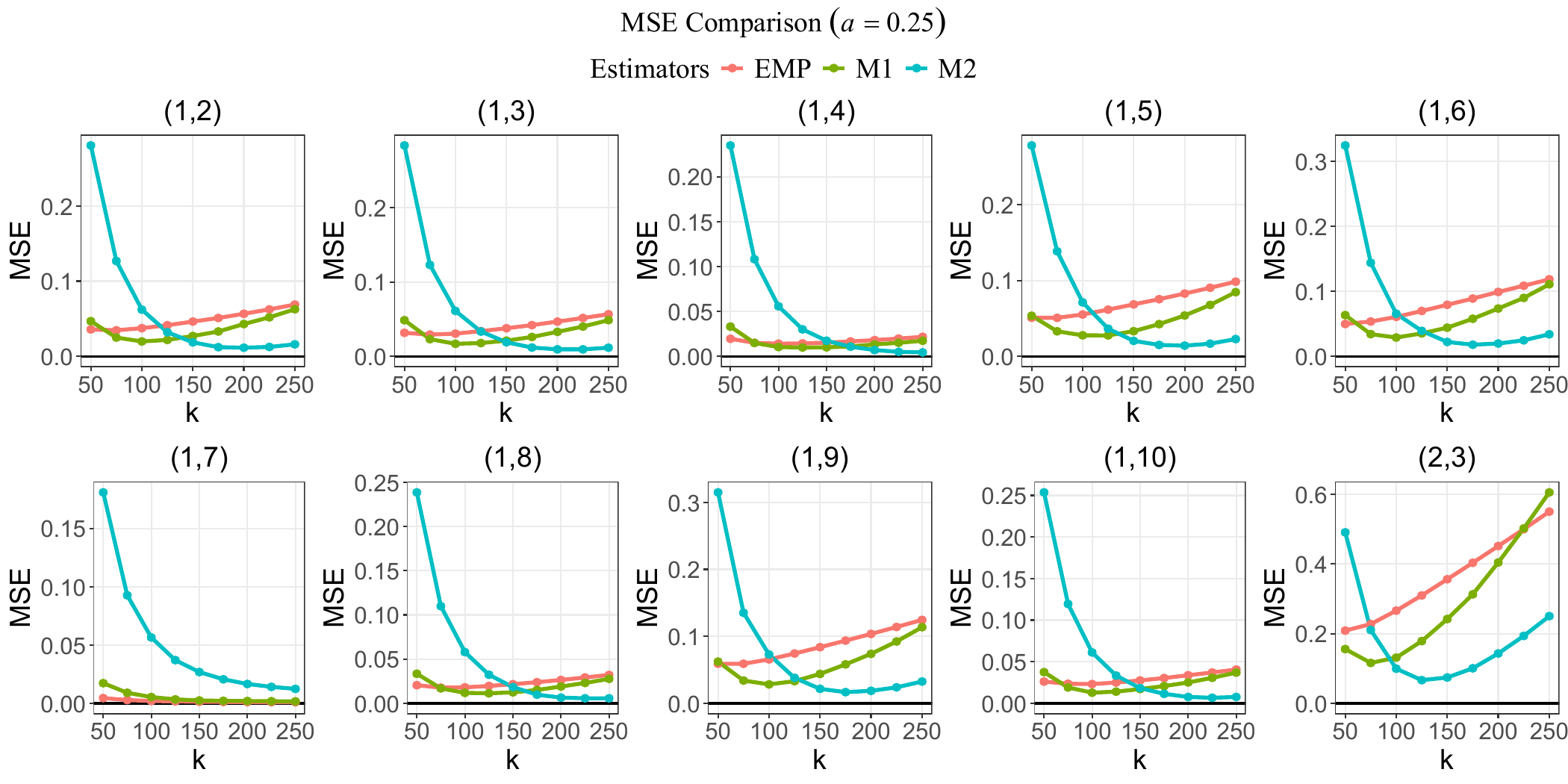}
\caption{The MSE for the first ten elements $\gamma_{ij}$ of $\bm{\Gamma}_1$, based on the empirical variogram estimator $\hat{\gamma}_{n,ij}^{\EMP}$ (EMP), first-order moment estimator $\hat{\gamma}_{n,ij}^{\mathrm{M},(1)}(k,-\log a)$ (M1) and second-order moment estimator $\hat{\gamma}_{n,ij}^{\mathrm{M},(2)}(k,-\log a)$ (M2) with $a=0.25$ in 300 replications. The random samples with size $n=1000$ are drawn from the $10-$dimensional \HR{} distribution with variogram matrix $\bm{\Gamma}_1$.}
\label{fig:Cmp10DMSE}
\end{figure}

\begin{figure}
\centering
\includegraphics[width=0.95\textwidth]{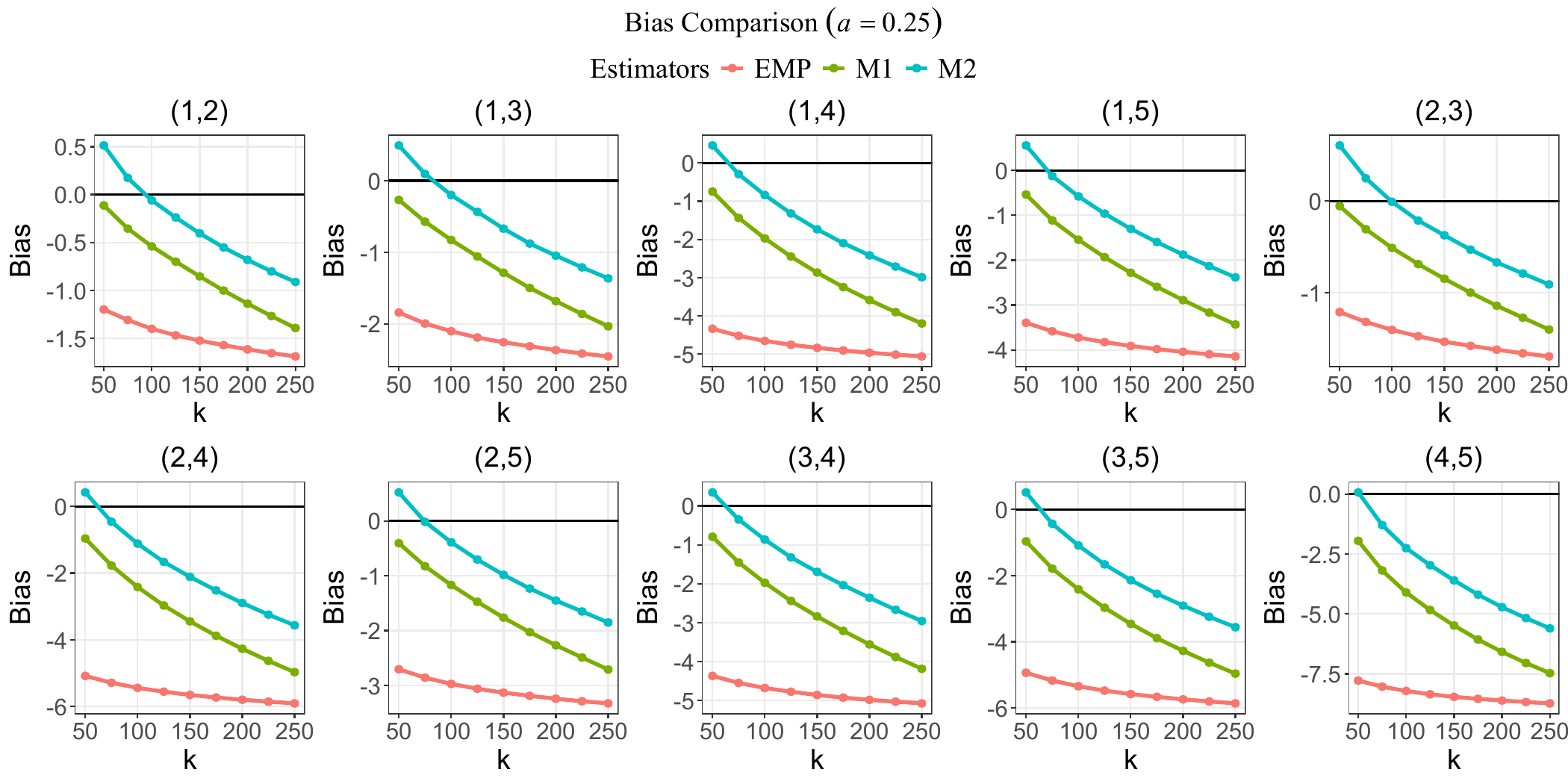}
\caption{The mean bias for $\gamma_{ij}$ of $\bm \Gamma_2$ with $i,j\in V$ and $i\neq j$, based on the empirical variogram estimator $\hat{\gamma}_{n,ij}^{\EMP}$ (EMP), first-order moment estimator $\hat{\gamma}_{n,ij}^{\mathrm{M},(1)}(k,-\log a)$ (M1) and second-order moment estimator $\hat{\gamma}_{n,ij}^{\mathrm{M},(2)}(k,-\log a)$ (M2) with $a=0.25$ in 300 replications. The random samples with size $n=1000$ are drawn from the $5-$dimensional \HR{} distribution with variogram matrix $\bm{\Gamma}_2$.}
\label{fig:Cmp5DBias}
\end{figure}

\begin{figure}
\centering
\includegraphics[width=0.95\textwidth]{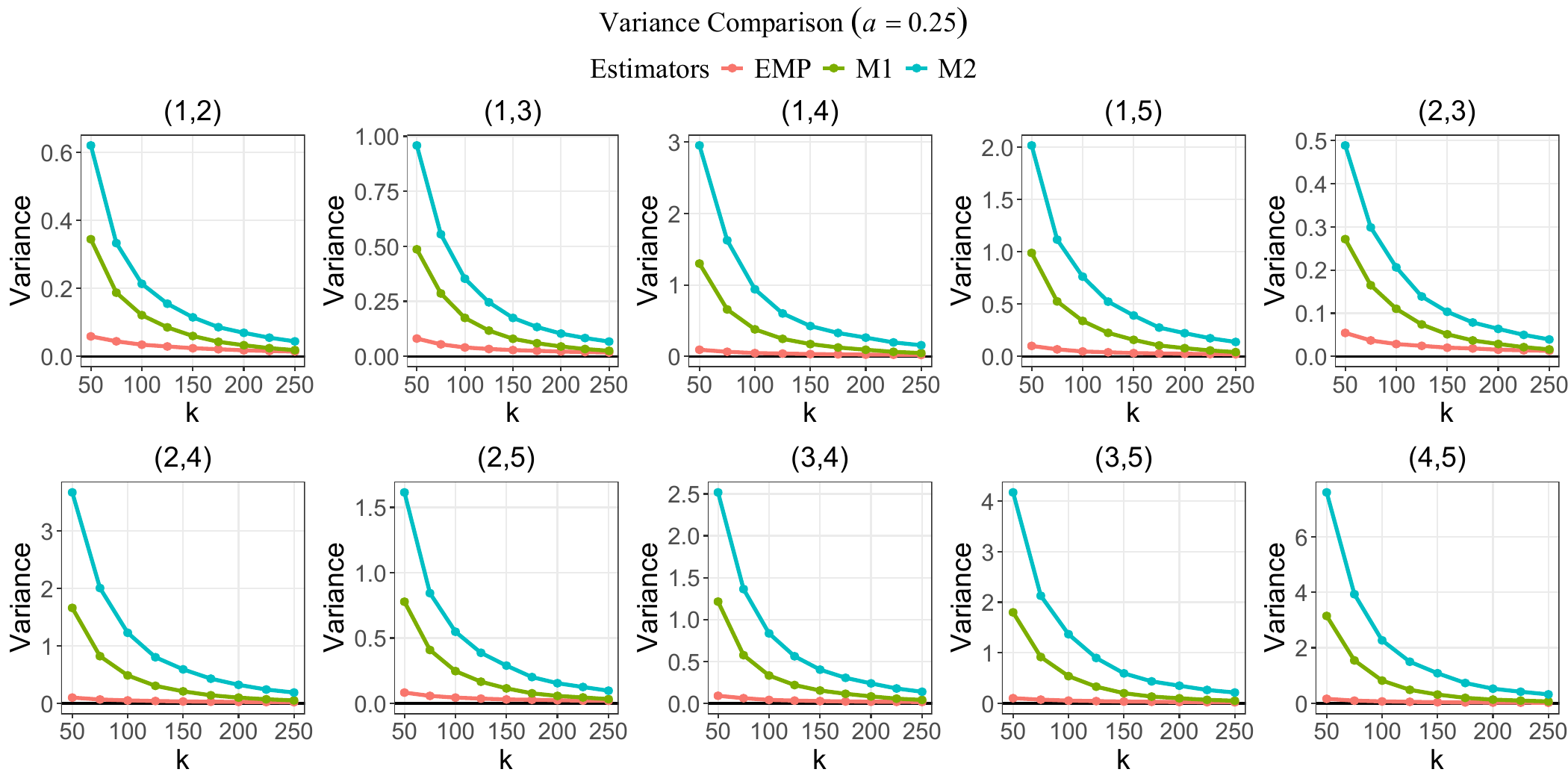}
\caption{The mean estimation variance for $\gamma_{ij}$ of $\bm \Gamma_2$ with $i,j\in V$ and $i\neq j$, based on the empirical variogram estimator $\hat{\gamma}_{n,ij}^{\EMP}$ (EMP), first-order moment estimator $\hat{\gamma}_{n,ij}^{\mathrm{M},(1)}(k,-\log a)$ (M1) and second-order moment estimator $\hat{\gamma}_{n,ij}^{\mathrm{M},(2)}(k,-\log a)$ (M2) with $a=0.25$ in 300 replications. The random samples with size $n=1000$ are drawn from the $5-$dimensional \HR{} distribution with variogram matrix $\bm{\Gamma}_2$.}
\label{fig:Cmp5DVariance}
\end{figure}

\begin{figure}
\centering
\includegraphics[width=0.95\textwidth]{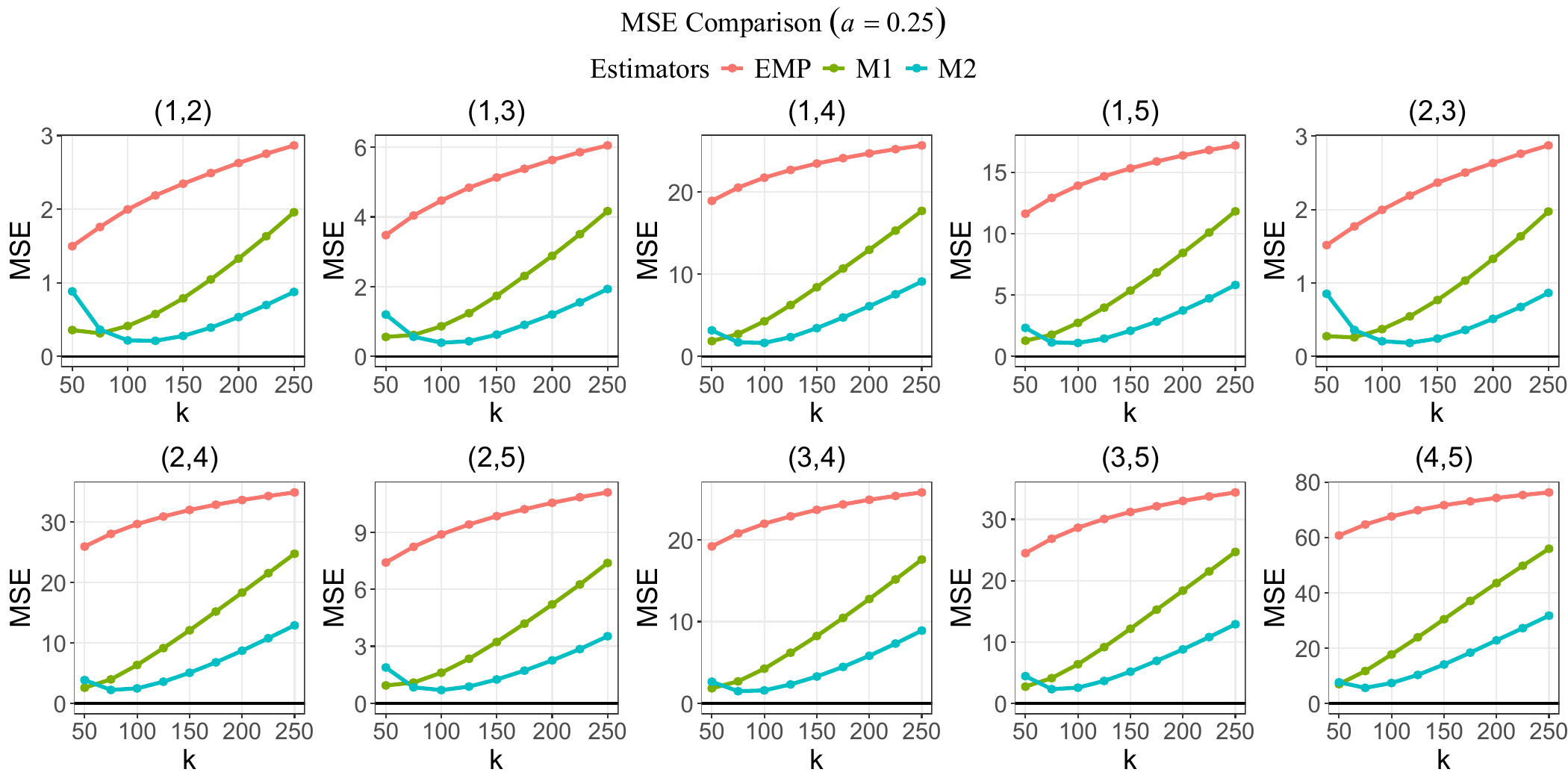}
\caption{The MSE for $\gamma_{ij}$ of $\bm \Gamma_2$ with $i,j\in V$ and $i\neq j$, based on the empirical variogram estimator $\hat{\gamma}_{n,ij}^{\EMP}$ (EMP), first-order moment estimator $\hat{\gamma}_{n,ij}^{\mathrm{M},(1)}(k,-\log a)$ (M1) and second-order moment estimator $\hat{\gamma}_{n,ij}^{\mathrm{M},(2)}(k,-\log a)$ (M2) with $a=0.25$ in 300 replications. The random samples with size $n=1000$ are drawn from the $5-$dimensional \HR{} distribution with variogram matrix $\bm{\Gamma}_2$.}
\label{fig:Cmp5DMSE}
\end{figure}

\end{document}